\numberwithin{equation}{section}
\theoremstyle{definition} \newtheorem{Def}{Definition}[section]
\theoremstyle{plain}\newtheorem{Thm}[Def]{Theorem}
\theoremstyle{plain}\newtheorem{Prop}[Def]{Proposition}
\theoremstyle{definition}\newtheorem{Rem}[Def]{Remark}
\theoremstyle{definition}
\theoremstyle{plain}\newtheorem{Lem}[Def]{Lemma}
\theoremstyle{plain}\newtheorem{Cor}[Def]{Corollary}
\theoremstyle{remark} \newtheorem*{Claim*}{Claim}
\theoremstyle{definition} 
\theoremstyle{plain}
\newcommand{\R}{\mathbb{R}}
\newcommand{\N}{\mathbb{N}}
\newcommand{\CC}{\mathcal{C}}
\newcommand{\A}{\mathscr{A}}
\newcommand{\D}{\mathcal{D}}
\newcommand{\n}{\mathrm{n}}
\newcommand{\bb}[1]{\mathbf{#1}}
\newcommand{\Prob}{\mathbb{P}}
\newcommand{\E}[1]{\mathbb{E}\left[#1\right]}
\newcommand{\V}[1]{\mathbf{Var}{\left[#1\right]}}
\newcommand{\Cov}[2]{\mathbf{Cov}{\left[#1,#2\right]}}
\newcommand{\eqLaw}{ \overset{d}{=} }
\newcommand{\Law}{\xrightarrow[]{d} }
\newcommand{\toP}{\xrightarrow[]{\Prob} }
\newcommand{\norm}[1]{\lVert#1\rVert}
\newcommand{\scal}[2]{\langle #1,#2\rangle}
\newcommand{\eps}{\varepsilon}
\newcommand{\ind}[1]{\mathds{1}{[#1]}}
\renewcommand{\(}{\left(}
\renewcommand{\)}{\right)}
\renewcommand{\[}{\left[}
\renewcommand{\]}{\right]}
\renewcommand{\{}{\left\lbrace}
\renewcommand{\}}{\right\rbrace}
\def\wt{\widetilde}
\begin{document}

\title{\normalsize{\textbf{\uppercase{Functional convergence of Berry's nodal lengths: approximate tightness and total disorder}}}}
\date{{\footnotesize \today}}
\author{Massimo Notarnicola$^1$, Giovanni Peccati$^1$, Anna Vidotto$^2$}
\date{\footnotesize%
    {\it $^1$Department of Mathematics, University of Luxembourg}\\
    {\it $^2$Department of Mathematics and Applications, University of Naples Federico II}\\[2ex]
    \today} 

\maketitle

\abstract{We consider Berry’s random planar wave model (1977), and prove spatial functional limit theorems -- in the high-energy limit -- for discretized and truncated versions of the random field obtained by restricting its nodal length to rectangular domains. Our analysis is crucially based on a detailed study of the projection of nodal lengths onto the so-called {\it second Wiener chaos}, whose high-energy fluctuations are given by a Gaussian {\it total disorder field} indexed by polygonal curves. Such an exact characterization is then combined with moment estimates for suprema of stationary Gaussian random fields, and with a tightness criterion by Davydov and Zikitis (2005). \\
{\bf Keywords:} Central Limit Theorems; Functional Convergence; Gaussian  Fields; Nodal Sets; Random Waves;  Total Disorder. \\
{\bf AMS Classification:} 60G60, 60F05, 34L20, 33C10}

\tableofcontents

\section{Introduction}

{\it In this work, every random object is defined on a common probability space $(\Omega, \mathscr{F}, \mathbb{P} )$, with $\mathbb{E}$ denoting expectation with respect to $\mathbb{P}$.}

\subsection{The model}\label{ss:model}

The aim of this paper is to initiate the study of the {\bf high-energy functional fluctuations} of geometric quantities attached to the zero set of {\bf Berry’s Random Plane Wave model} $B_E = \{B_E (x) : x \in\R^2\}$ with parameter $E > 0$ --- thus addressing a question left open in \cite{PV:20}. Berry's model is defined as the unique (in distribution) isotropic centered Gaussian field on the plane solving almost surely the Helmholtz equation
$$
\Delta B_E(x) + 4\pi^2E \cdot B_E(x)=0, \quad x = (x_1,x_2)\in \R^2,
$$
with $\Delta=\partial^2/\partial x_1^2+\partial^2/\partial x_2^2$ equal to the usual Laplace operator\footnote{The presence of the prefactor $4\pi^2$ is inherited from \cite{NPR:19, PV:20}, where it was introduced in order to facilitate the connections with the literature about Arithmetic Random Waves \cite{KKW:13, MPRW:16}. }. Using e.g. \cite[Theorem 5.7.2]{AT:07}, the above characterization is equivalent to requiring that the covariance function of $B_E$  is given by 
\begin{equation}\label{covE}
r^E(x, y) = r^E(x-y) := J_0(2\pi \sqrt E \norm{x - y}), \quad  x,y\in \R^2,
\end{equation}
where $J_0$ is the {\bf Bessel function of the first kind} of order $0$. The random field $B_E$ was introduced by Berry \cite{Be:77,Be:02} and subsequently studied in several works  --- see for instance \cite{NPR:19, KW:18, BCW:19,CH:16, PV:20, DNPR:19}. 

\medskip

To simplify the discussion, we will sometimes write $b = \{b(x) : x\in \R^2\}$ to indicate the random field $B_E$ for $E = (4\pi^2)^{-1}$, in such a way that $b$ is almost surely a Laplace eigenfunction with unit eigenvalue. One should regard the random field $b$ as a {\it canonical} Gaussian Laplace eigenfunction on $\R^2$, emerging as a local scaling limit in a number of models of Gaussian random fields on 2-manifolds --- see e.g. \cite{CH:16, DNPR:19, MPRW:16, KKW:13, Wi:10, W:22, Ze:09} for explicit examples. We also recall that, in resonance with {\bf Berry's conjecture} (see e.g. \cite{ABLM:21, CH:16, I:21, W:22}, as well as \cite{Be:77} for the original formulation), the field $b$ is believed to be a model for the high-energy behaviour of deterministic Laplace eigenfunctions on manifolds where classical dynamics are sufficiently chaotic. \\

\noindent \textbf{Some notation.} Given two sequences of positive numbers $\lbrace a_n\rbrace_n$ and $\lbrace b_n\rbrace_n$, we write $a_n=o(b_n)$ if $a_n/b_n \to 0$ as $n\to +\infty$, $a_n = O(b_n)$ if $a_n/b_n$ is asymptotically bounded, $a_n \sim b_n$ if $a_n/b_n \to 1$ as $n\to +\infty$ and $a_n \approx b_n$ if $a_n/b_n \to c$ as $n\to +\infty$, where $c$ is a constant that does not depend on $n$. Moreover, for random variables $\lbrace X_n\rbrace_n$ and $X$ we write $X_n \stackrel{d}{\to} X$ if the sequence $X_n$ converges to $X$ in distribution.

\subsection{Fluctuations of nodal sets}

As anticipated, in this paper we will focus on the {\bf high-energy behaviour} of the {\bf nodal set} $$B_E^{-1}(0) = \{x\in \R^2 : B_E(x) = 0\}.$$  It is a well-known fact \cite{CH:16, NPR:19, PV:20} that $B_E^{-1}(0)$ is almost surely equal to the union of disjoint rectifiable curves (called {\bf nodal lines}). Local and non-local functionals of such a random set have been recently the object of an intense study, especially in connection with nodal statistics of (approximate) Laplace eigenfunctions on Riemann surfaces (see \cite{CH:16, DNPR:19, KW:18, MRVKS:21, NS:16, W:22} for a sample of recent contributions). 

\medskip

For every Borel set $\mathcal{D}\subseteq \R^2$, now write
\begin{equation}\label{e:length}
\mathscr{L}_{E}(\mathcal{D}):=\mathcal{H}^1(B_E^{-1}(0)\cap \mathcal{D}),
\end{equation}
where $\mathcal{H}^1$ denotes the $1$-dimensional Hausdorff measure, to indicate the length of the portion of the nodal lines of $B_E$ contained in $\mathcal{D}$. From the above discussion, it is clear that the mapping $\mathcal{D} \mapsto \mathscr{L}_{E}(\mathcal{D})$ defines with probability one a set function with the following elementary properties: 
\begin{enumerate}

\item $\mathscr{L}_{E}$ is a locally finite measure on $\mathscr{B}(\R^2)$;
\item The support of $\mathscr{L}_{E}$ is $B_E^{-1}(0)$;
\item The restriction of $\mathscr{L}_{E}$ to any square $ \mathcal{Q} = [a,b]\times [c,d]$ is completely determined by the random partition function 
\begin{equation}\label{e:rf}
(t_1, t_2)\mapsto \mathscr{L}_E([a,t_1] \times [c,t_2]), \quad (t_1,t_2)\in \mathcal{Q}.
\end{equation}
\end{enumerate}

\medskip

Several results characterizing the high-energy behaviour of $\mathscr{L}_E$ are available. The most relevant for our study are reported in the next statement. From now on, we use the symbol $\mathscr{A}$ to denote the collection of all piecewise $C^1$ simply connected compact subsets of $\R^2$ having non-empty interior.

\begin{Thm} \label{thm_L}

\begin{enumerate}

\item[\rm \bf 1.] {\rm (See \cite{Be:02, NPR:19})} For every $\D\in \A$, one has that

 \begin{eqnarray*}
\E{\mathscr{L}_E(\D)} = \mathrm{area}(\D) \frac{\pi}{\sqrt{2}}\sqrt{E} \quad \mbox{and}\quad
\V{\mathscr{L}_E(\D)} \sim \mathrm{area}(\D) \frac{\log E}{512\pi},
\end{eqnarray*}
as $E\to \infty$.

\item[\rm\bf 2.] {\rm (See Theorem 1.1 in \cite{NPR:19})} For every $\D\in \A$, as $E\to \infty$,
\begin{eqnarray}\label{CLT1}
\wt{\mathscr{L}}_E(\D):= \sqrt{\frac{512\pi}{\log E}}\big(\mathscr{L}_E(\D)-\E{\mathscr{L}_E(\D)}\big)
\Law \mathcal{N}(0,{\rm area}(\mathcal{D}) ),
\end{eqnarray}
where $\mathcal{N}(\mu,\sigma^2)$ denotes the one-dimensional Gaussian distribution with mean $\mu$ and variance $\sigma^2$.

\item[\rm\bf 3.] {\rm (See Theorem 3.2 in \cite{PV:20})} For every integer $d\geq 1$ and every fixed $\D_1,\D_2,\dots,\D_d\in\A$, we define the $d\times d$ matrix $\Sigma = \{\Sigma(i,j):i,j=1,\ldots,d \}$ by the relation
\begin{equation}\label{lim_cov_ij}
\Sigma(i,j):=\mathrm{area}(\D_i\cap \D_j)\,.
\end{equation}
Then, as $E\to \infty$, one has that
\begin{equation}\label{e:mainlength}
\(\widetilde{\mathscr{L}}_{E}(\D_1),\widetilde{\mathscr{L}}_{E}(\D_2),\dots,\widetilde{\mathscr{L}}_{E}(\D_d)\) \Law \mathcal{N}_d(0,\Sigma), 
\end{equation}
where $ \mathcal{N}_d(0,\Sigma)$ stands for the centered  $d$-dimensional  Gaussian distribution with covariance $\Sigma$.

\end{enumerate}
\end{Thm}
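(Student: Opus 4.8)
The plan is to combine the Wiener chaos decomposition of nodal lengths with a multivariate version of the fourth moment theorem, so that the multidimensional statement reduces to a single covariance computation. First I would expand each centered length into Wiener chaos, $\mathscr{L}_E(\D_i)-\E{\mathscr{L}_E(\D_i)}=\sum_{q\geq1}\mathscr{L}_E[2q](\D_i)$, where $\mathscr{L}_E[2q](\D_i)$ denotes the projection onto the $2q$-th chaos and only even orders appear because $\mathscr{L}_E$ is invariant under $B_E\mapsto -B_E$. The analysis behind part {\bf 2} above (from \cite{NPR:19}) shows that the fourth chaos is dominant: $\V{\mathscr{L}_E[4](\D_i)}\sim\frac{\log E}{512\pi}\,\mathrm{area}(\D_i)$, while all remaining projections contribute $o(\log E)$ to the variance. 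Consequently, after normalization, $\widetilde{\mathscr{L}}_E(\D_i)=\sqrt{512\pi/\log E}\;\mathscr{L}_E[4](\D_i)+o_{L^2}(1)$, and by Slutsky's lemma it suffices to prove the joint CLT for the vector of normalized fourth-chaos projections.

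Next I would identify the limiting covariance. Exploiting the fact that $\mathscr{L}_E$ is an additive random measure, for each pair $i,j$ I would split $\D_i$ and $\D_j$ along the finite partition of $\R^2$ generated by $\D_1,\dots,\D_d$, thereby reducing the cross-covariance $\Cov{\widetilde{\mathscr{L}}_E(\D_i)}{\widetilde{\mathscr{L}}_E(\D_j)}$ to a sum of single-piece variances (handled as in part {\bf 1}, yielding contributions proportional to areas) plus cross terms over pieces with disjoint interiors. The crux is then an asymptotic independence, or diagonal-localization, estimate: for $\D,\D'\in\A$ with disjoint interiors one has $\Cov{\widetilde{\mathscr{L}}_E(\D)}{\widetilde{\mathscr{L}}_E(\D')}\to 0$. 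This follows from the two-point representation of the fourth-chaos covariance as an integral over $\D\times\D'$ of a kernel built from $r^E$ and its derivatives: the logarithmically divergent part of this kernel concentrates on the diagonal $x=y$, so only the overlap survives and the off-diagonal mass is $O(1)=o(\log E)$. Combining these facts gives $\Cov{\widetilde{\mathscr{L}}_E(\D_i)}{\widetilde{\mathscr{L}}_E(\D_j)}\to \mathrm{area}(\D_i\cap\D_j)=\Sigma(i,j)$.

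Finally I would invoke the multivariate fourth moment theorem of Peccati--Tudor: for a sequence of random vectors whose components live in a single fixed Wiener chaos (here the fourth), joint convergence to $\mathcal{N}_d(0,\Sigma)$ is equivalent to convergence of the covariance matrices to $\Sigma$ together with componentwise convergence to the respective one-dimensional Gaussians. The componentwise convergence is precisely the content of part {\bf 2} above, and the covariance convergence was just established; hence the vector of normalized fourth-chaos projections converges to $\mathcal{N}_d(0,\Sigma)$, and the first-step reduction transfers this conclusion to the original normalized lengths $(\widetilde{\mathscr{L}}_E(\D_1),\dots,\widetilde{\mathscr{L}}_E(\D_d))$.

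I expect the main obstacle to be the diagonal-localization estimate underpinning the covariance convergence. Whereas the single-domain variance asymptotics are inherited from \cite{NPR:19}, the off-diagonal cross-covariance requires controlling the full two-point kernel over $\D_i\times\D_j$ and showing that contributions from separated pairs $(x,y)$, as well as those from neighborhoods of the merely piecewise $C^1$ boundaries of the $\D_i$, are negligible at scale $\log E$. This is where the bulk of the technical work, based on the large-argument asymptotics of $J_0$ and its derivatives, would be concentrated.
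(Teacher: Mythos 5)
Theorem \ref{thm_L} is not proved in this paper: all three parts are imported verbatim from \cite{Be:02, NPR:19, PV:20}, so there is no in-paper proof to compare against. Your sketch correctly reconstructs the strategy of those references (chaos expansion with only even orders, dominance of the fourth chaos at scale $\log E$, diagonal localization of the two-point covariance kernel so that $\Cov{\widetilde{\mathscr{L}}_E(\D_i)}{\widetilde{\mathscr{L}}_E(\D_j)}\to \mathrm{area}(\D_i\cap\D_j)$, and the Peccati--Tudor multivariate fourth moment theorem), which is essentially the same approach as in \cite{NPR:19} and \cite{PV:20}.
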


\medskip

Theorem \ref{thm_L}--${\bf 3.}$  shows that, in the high-energy limit, 
the finite-dimensional distributions of the process $\{\wt{\mathscr{L}}_E(\D):\D\in \A \}$ converge to those of a Gaussian field with the covariance structure of a homogeneous independently scattered random measure with unit intensity. Such a characterization immediately implies the following result. 
\begin{Prop}[See \cite{PV:20}]\label{p:fdd}

Define the random field $X_E=\{X_E(t_1,t_2):(t_1,t_2)\in[0,1]^2\}$ as 
\begin{equation}\label{Eq:XE}
  X_E(t_1,t_2):= \wt{\mathscr{L}}_{E}([0,t_1]\times [0,t_2]).
\end{equation}
Then, as $E\to \infty$, $X_E$ converges in the sense of finite-dimensional distributions  to a standard  \textit{Wiener sheet}, that is, to 
a centered Gaussian process $\mathbf{W}=\{\mathbf{W}(t_1,t_2):(t_1,t_2)\in [0,1]^2\}$ with covariance function $\E{\mathbf{W}(t_1,t_2)\mathbf{W}(s_1,s_2)}= (t_1\wedge s_1)(t_2\wedge s_2)$. 
\end{Prop}

We note that the choice of the specific rectangle $[0,1]^2$ is immaterial: in particular, the content of Proposition \ref{p:fdd} can be easily adapted to deal with arbitrary regions $\mathcal{Q} = [a,b]\times [c,d]$. It is a remarkable fact -- not noted in \cite{PV:20} -- that the content of Proposition \ref{p:fdd} allows one to directly conclude that the signed measure $\wt{\mathscr{L}}_E$ defined in \eqref{CLT1} converges to a standard white noise on the space of generalised functions $\mathscr{D}'(R)$, with $R = (0,1)^2$. Such an implication is made clear in the next statement, where we use the symbol $ \mathcal{C}_c^\infty(R)$ to indicate the class of compactly supported smooth mappings on $R$; we refer the reader to \cite{F:67} for definitions and background.

\begin{Prop}[Convergence to white noise]\label{p:whitenoise} Let the above notation prevail. For every $\varphi \in \mathcal{C}_c^\infty(R)$, define
$$
\langle \wt{\mathscr{L}}_E, \varphi \rangle := \sqrt{\frac{512\pi}{\log E}}\left(\int_R \varphi({\bf t}) \mathscr{L}_E(d{\bf t}) - \frac{\pi}{\sqrt{2}}\sqrt{E} \int_R\varphi({\bf t}) d{\bf t} \right).
$$
Then, the mapping $ \varphi \mapsto \langle \wt{\mathscr{L}}_E, \varphi \rangle$ is a random element with values in $\mathscr{D}'(R)$ and, as $E\to \infty$, $\langle \wt{\mathscr{L}}_E, \bullet \rangle$ converges in distribution, in the sense of $\mathscr{D}'(R)$, to a standard white noise. In particular, for every integer $m\geq1$ and every $\varphi_1,..., \varphi_m\in\mathcal{C}_c^\infty(R)$, the vector $$\(\langle \wt{\mathscr{L}}_E, \varphi_1\rangle,\dots, \langle \widetilde{\mathscr{L}}_{E}, \varphi_m \rangle\)$$ converges in distribution to a $m$-dimensional centered Gaussian vector with covariance
$$
\Lambda(i,j) = \int_R \varphi_i({\bf t}) \varphi_j({\bf t}) d{\bf t}, \quad i,j=1,...,m.
$$
\end{Prop}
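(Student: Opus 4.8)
The plan is to transfer the problem from the random signed measure $\wt{\mathscr{L}}_E$ to the partition-function field $X_E$ of Proposition~\ref{p:fdd} by a two-dimensional integration by parts, and then to upgrade the finite-dimensional convergence $X_E\to\mathbf{W}$ to a convergence of the associated random distributions. First I would record the deterministic identity: since every $\varphi\in\mathcal{C}_c^\infty(R)$ satisfies $\varphi(s)=\int_{s_1}^1\int_{s_2}^1\partial_1\partial_2\varphi(u)\,du$ (no boundary terms, by compact support in the open square), Fubini applied to the almost surely locally finite measure $\mathscr{L}_E$ gives
$$
\int_R\varphi\,d\mathscr{L}_E=\int_R\partial_1\partial_2\varphi(s)\,\mathscr{L}_E([0,s_1]\times[0,s_2])\,ds,
$$
and the same identity with $\mathscr{L}_E$ replaced by $\tfrac{\pi}{\sqrt2}\sqrt E\cdot\Leb$ handles the centering. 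Subtracting, I obtain the exact representation $\langle\wt{\mathscr{L}}_E,\varphi\rangle=\int_R\partial_1\partial_2\varphi(s)\,X_E(s)\,ds$. Because $\mathscr{L}_E$ is almost surely a locally finite measure, the bound $|\int_R\varphi\,d\mathscr{L}_E|\le\norm{\varphi}_\infty\,\mathscr{L}_E(\overline R)$ shows that $\varphi\mapsto\langle\wt{\mathscr{L}}_E,\varphi\rangle$ is a.s. a continuous linear functional on $\mathcal{D}(R)$, hence a bona fide random element of $\mathscr{D}'(R)$.

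Next I would exploit the nuclearity of $\mathcal{D}(R)$. By the Lévy-type continuity theorem for random elements of duals of nuclear spaces (see \cite{F:67}), to prove that $\langle\wt{\mathscr{L}}_E,\bullet\rangle$ converges in distribution to white noise it suffices to show that, for each fixed $\varphi$, the characteristic functional $\E{e^{i\langle\wt{\mathscr{L}}_E,\varphi\rangle}}$ converges to $\exp(-\tfrac12\norm{\varphi}_{L^2(R)}^2)$; the latter is continuous on $\mathcal{D}(R)$ and is therefore the characteristic functional of a genuine (white-noise) random distribution. Since $\varphi\mapsto\langle\wt{\mathscr{L}}_E,\varphi\rangle$ is linear, every linear combination $\sum_j\lambda_j\langle\wt{\mathscr{L}}_E,\varphi_j\rangle$ equals $\langle\wt{\mathscr{L}}_E,\sum_j\lambda_j\varphi_j\rangle$; thus Cramér--Wold reduces the ``in particular'' multivariate statement to the very same one-dimensional limit, and polarization identifies the limiting covariance as $\Lambda(i,j)=\int_R\varphi_i\varphi_j$. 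In short, everything reduces to the single scalar central limit theorem
$$
\langle\wt{\mathscr{L}}_E,\varphi\rangle=\int_R\partial_1\partial_2\varphi(s)\,X_E(s)\,ds\ \Law\ \mathcal{N}\big(0,\norm{\varphi}_{L^2(R)}^2\big),\qquad\varphi\in\mathcal{C}_c^\infty(R).
$$

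To prove this scalar limit I would use Proposition~\ref{p:fdd} through a Riemann-sum approximation. Writing $g=\partial_1\partial_2\varphi$ and partitioning $R$ into $N^2$ congruent cells with centres $s_k$, set $L_N(X_E)=N^{-2}\sum_k g(s_k)X_E(s_k)$. For fixed $N$, Proposition~\ref{p:fdd} gives $L_N(X_E)\Law L_N(\mathbf{W})$ as $E\to\infty$, a centered Gaussian; and as $N\to\infty$, $L_N(\mathbf{W})\to\int_R g\,\mathbf{W}$ in $L^2(\Prob)$ by a.s. continuity and boundedness of the paths of $\mathbf{W}$. The limit is Gaussian with variance $\iint_{R\times R}g(s)g(s')(s_1\wedge s_1')(s_2\wedge s_2')\,ds\,ds'$, which equals $\int_R\varphi^2$ after integrating by parts, the Wiener-sheet covariance being exactly the Green kernel of $\partial_1\partial_2$. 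A standard interchange-of-limits argument then yields the scalar CLT, provided
$$
\lim_{N\to\infty}\ \sup_{E\ge E_0}\ \E{\Big(\int_R g\,X_E\,-\,L_N(X_E)\Big)^2}=0 .
$$

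The hard part is precisely this last display: it is the only ingredient not furnished by Proposition~\ref{p:fdd}, and it demands a modulus-of-continuity control on $X_E$ that is \emph{uniform in the energy} $E$. Splitting $g\,X_E$ cell-by-cell and using the smoothness of $g$, the estimate reduces to bounding the increment variances $\E{(X_E(s)-X_E(s'))^2}$, which by additivity of $\mathscr{L}_E$ are normalized nodal-length variances over thin rectangles; concretely one needs $\V{\mathscr{L}_E(Q)}\le C\,\mathrm{area}(Q)\,\log E$ for every rectangle $Q\subseteq R$ and all large $E$, with $C$ independent of $Q$. This upgrades the pointwise asymptotics of Theorem~\ref{thm_L}--\textbf{1.} to a uniform upper bound, and I would obtain it from the scaling identity $\mathscr{L}_E(\D)\eqLaw(2\pi\sqrt E)^{-1}\,\mathcal{H}^1\big(b^{-1}(0)\cap 2\pi\sqrt E\,\D\big)$ together with a Kac--Rice (or chaos-expansion) bound on the nodal-length variance of the canonical field $b$ over domains of arbitrary area. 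Granting this, one gets $\E{(X_E(s)-X_E(s'))^2}\le C(|s_1-s_1'|+|s_2-s_2'|)$ uniformly in large $E$, the Riemann sums converge in $L^2(\Prob)$ uniformly in $E$, the scalar CLT follows, and the nuclear-space reduction of the second paragraph completes the proof.
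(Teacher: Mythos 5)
Your first and last steps coincide with the paper's own proof: the identity $\langle \wt{\mathscr{L}}_E, \varphi\rangle = \int_R \partial_1\partial_2\varphi({\bf t})\, X_E({\bf t})\, d{\bf t}$, obtained by writing $\varphi({\bf t}) = \int_{(t_1,1)\times(t_2,1)}\partial_1\partial_2\varphi({\bf z})\, d{\bf z}$ and applying a deterministic (resp.\ stochastic) Fubini theorem on the two sides, is exactly the paper's reduction, and the final identification of the limit as white noise via characteristic functionals on the nuclear space $\mathcal{C}_c^\infty(R)$ is what the paper delegates to \cite[Theorem III.6.5]{F:67}. The divergence is in the middle. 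The paper runs no Riemann-sum approximation: it invokes \cite[Theorem 4]{I:80}, which upgrades the finite-dimensional convergence $X_E\to\mathbf{W}$ of Proposition \ref{p:fdd} to convergence in distribution of the integrals $\int_R X_E({\bf t})\psi({\bf t})d{\bf t}$ under only the \emph{pointwise} second-moment conditions $\sup_{E,{\bf t}}\E{X_E({\bf t})^2}<\infty$ and $\E{X_E({\bf t})^2}\to\E{\mathbf{W}({\bf t})^2}$, both already available from \cite{NPR:19}. No modulus-of-continuity control on $X_E$, uniform in $E$ or otherwise, enters the paper's argument.

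Your route, by contrast, hinges on the display $\lim_{N}\sup_{E\ge E_0}\E{(\int_R g\,X_E - L_N(X_E))^2}=0$, which you reduce to the claim that $\V{\mathscr{L}_E(Q)}\le C\,\mathrm{area}(Q)\,\log E$ for \emph{every} rectangle $Q\subseteq R$ uniformly in large $E$. This is not a consequence of Theorem \ref{thm_L}--{\bf 1}, which is an asymptotic statement for a \emph{fixed} domain, and the uniformity over sub-wavelength rectangles (one side $\lesssim E^{-1/2}$) is precisely where it breaks: the boundary (second-chaos) contribution to the variance scales like $\mathrm{length}(\partial Q)/\sqrt{E}$ rather than like $\mathrm{area}(Q)\log E$ (cf.\ Theorem \ref{AsyVar}), so for a thin rectangle of dimensions $1\times\delta$ with $\delta\ll(\sqrt{E}\log E)^{-1}$ the asserted inequality is false as stated. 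What you actually need is weaker --- an area-proportional bound up to an additive error that is $o(1)$ after the $512\pi/\log E$ normalization, uniformly in $Q$, which suffices once you replace $\sup_{E\ge E_0}$ by $\limsup_{E\to\infty}$ in the interchange of limits --- and the paper's Lemma \ref{Cheby} (for chaoses of order $\ge 6$) together with the uniform fourth-chaos estimates implicit in \cite{PV:20} suggests such a bound is attainable; but it is neither proved nor correctly formulated in your proposal, so the key analytic step does not close as written. The paper's appeal to \cite{I:80} is exactly what lets it bypass this issue.
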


For the sake of completeness, the proof of Proposition \ref{p:whitenoise} is reported in Section \ref{ss:proofwhitenoise}. See e.g. \cite{Wi:10, AL:21} for similar results involving, respectively, the nodal set of random spherical harmonics and the roots of Kostlan polynomials.

\medskip

\noindent{\bf Remark on notation}. From now on, we will freely use the language of Gaussian stochastic analysis and {\bf Wiener chaos expansions}, as detailed e.g. in \cite[Chapter 2]{NP:12} or \cite[Chapter 1]{N:06}. In particular, given a square-integrable random variable $X$ that is measurable with respect to the $\sigma$-field $\sigma(G)$ generated by a separable Gaussian field $G$, we write $X[q]$, $q=0,1,\ldots,$ to indicate the projection of $X$ onto the $q$th Wiener chaos associated with $G$, in such a way that $X = \sum_{q\geq 0} X[q]$, where the series converges in $L^2$. Similarly, given a square-integrable and $\sigma(G)$-measurable random field $Z = \{ Z({ \bf t}) : {\bf t} = (t_1,t_2)\in [0,1]^2\}$, we will write $Z[q] :=  \{ Z[q]({ \bf t} ) :  { \bf t} \in [0,1]^2\}$, where 
$Z[q]({ \bf t})$ is the projection of $Z({ \bf t})$ onto the $q$th Wiener chaos. Applying these conventions to the normalized nodal length process $Z =X_E$ introduced in \eqref{Eq:XE}, one obtains the Wiener-It\^{o} representation
\begin{eqnarray}\label{Eq:XE24R}
X_E = X_E[2]+ X_E[4] + R_E, \quad  {\rm where}\,\,
R_E({\bf t}):= \sum_{q\geq 3} X_E[2q]({\bf t}),
\end{eqnarray}
and the series converges in $L^2$ for every fixed ${\bf t}$.

%
%
%
%

\subsection{The main question}\label{ss:mainquestion}

Some additional notation is required in order to frame our contribution. Consider the unit square $[0,1]^2$ and, for all fixed $\bb{t}=(t_1,t_2)\in [0,1]^2$, define the following four regions: 
\begin{eqnarray*}
Q(\bb{t},NE) &:=& \{ \bb{s}=(s_1,s_2)\in [0,1]^2 : s_1>t_1, s_2>t_2\} \\
Q(\bb{t},NW) &:=& \{ \bb{s}=(s_1,s_2)\in [0,1]^2 : s_1<t_1, s_2>t_2\} \\
Q(\bb{t},SW) &:= &\{ \bb{s}=(s_1,s_2)\in [0,1]^2 : s_1<t_1, s_2<t_2\}\\
Q(\bb{t},SE) &:=& \{ \bb{s}=(s_1,s_2)\in [0,1]^2 : s_1>t_1, s_2<t_2\}. 
\end{eqnarray*}
We remark that some of these regions may be empty, in the case where $\bb{t}$ belongs to the boundary of $[0,1]^2$.
The {\bf Skorohod space} $\mathbf{D}_2=D([0,1]^2,\R)$ is the class of functions $f:[0,1]^2 \to \R$ verifying the following continuity property for every $\bb{t}\in [0,1]^2$: for every $R \in \{NE,NW,SW,SE\}$ and every sequence $\{\bb{t}_n:n\geq 1\} \subset Q(\bb{t},R)$ such that $\bb{t}_n \to \bb{t}$ as $n\to \infty$, the limit $\lim_{n\to\infty} f(\bb{t}_n)$ exists and is finite, and, moreover, for every sequence $\{\bb{t}_n:n\geq 1\}\subset Q(\bb{t}, NE)$ such that $\bb{t}_n \to \bb{t}$ as $n\to \infty$, one has that 
$\lim_{n\to \infty} f(\bb{t}_n)= f(\bb{t})$.

\medskip
We endow the space $\mathbf{D}_2$  both with the $\sigma$-field generated by coordinate projections, and with the
Skorohod topology described in Neuhaus \cite[p.1289]{Ne:71}; such a topology is generated by a distance, noted $d$ in \cite{Ne:71}, making $\mathbf{D}_2$ a separable metric space. We also define $\mathbf{C}_2=C([0,1]^2,\R)$ to be the subset
of $\mathbf{D}_2$ composed of continuous mappings. 

\medskip

We note that, in the case where $f:[0,1]^2\to \R$ takes the form $f(\bb{t}):=\mu([0,t_1]\times [0,t_2])$ for    some    finite measure  $\mu$ on $[0,1]^2$, it follows from an  application of the dominated convergence theorem  that $f \in \mathbf{D}_2$. 
In view of the above discussion, the nodal length processes $\{X_E:E>0\}$ defined in \eqref{Eq:XE} are thus $\mathbf{D}_2$-valued random elements. 

\medskip

Our aim in this paper is to initiate the study of the following question, that was left open in \cite{PV:20}.

\bigskip

\noindent{\bf Question A}. {\it As $E\to \infty$, does $X_E$ converge in distribution to a standard Brownian sheet ${\bf W}$ in the Skorohod space ${\bf D}_2$?
}
\medskip

As explained in \cite[Section 4.3]{PV:20} and in the forthcoming Section \ref{ss:discretized}, an affirmative answer to {\bf Question A} would yield a convergence result that is strictly stronger than the convergence in the senses of finite-dimensional distributions and of generalized functions, featured in Propositions \ref{p:fdd} and \ref{p:whitenoise} above --- implying in particular limit theorems for random variables depending on the maximum of $X_E$ or of its absolute value. In view of the results of \cite{DNPR:22}, these result would immediately extend to monochromatic random waves on Riemann surfaces without conjugate points.

\medskip

We choose to base our analysis of {\bf Question A} on the following standard lemma, whose proof is deferred to Appendix \ref{App:UVW}.
\begin{Lem}\label{Lem:UVW}
Let $\{X, X_n :n\geq 1 \}$ be a collection of random fields with values in $\mathbf{D}_2$ such that $\Prob(X\in \mathbf{C}_2) = 1$. We assume that, for every $n\geq 1$, the process 
$X_n$ can be written as $X_n = U_n+V_n+W_n$, where the fields $U_n,V_n, W_n$ are such that
\begin{enumerate}[label=(\roman*)]
\item[\rm (a)]  as $n\to \infty$, $U_n$ converges weakly to $X$ in   $\mathbf{D}_2$,
\item[\rm (b)] as $n\to \infty$, $V_n$ converges weakly to zero in   $\mathbf{D}_2$,
\item[\rm (c)] for every $\eps>0$,
\begin{eqnarray*}
\lim_{n\to \infty} \Prob\{ \sup_{\bb{t}\in [0,1]^2} \left|W_n(\bb{t})\right|>\eps\} = 0,
\end{eqnarray*}
\end{enumerate}
Then, $X_n$ converges weakly to $X$ in $\mathbf{D}_2$.
\end{Lem}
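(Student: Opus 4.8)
The plan is to reduce the three-term decomposition to a two-term, Slutsky-type argument and then invoke the classical converging-together lemma for separable metric spaces, which states that if $U_n \Law X$ and $d(X_n,U_n)\toP 0$, then $X_n \Law X$; here $d$ is the Neuhaus distance of \cite{Ne:71}, and separability of $\mathbf{D}_2$ is exactly the property recorded after the definition of the Skorohod topology. The crucial elementary input is that the Neuhaus distance is dominated by the uniform distance, i.e. $d(f,g)\le \norm{f-g}_\infty$ with $\norm{h}_\infty := \sup_{\bb{t}\in[0,1]^2}|h(\bb{t})|$, which follows by plugging the identity time-change into the infimum defining $d$.

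First I would dispose of $V_n$. By hypothesis (b), $V_n$ converges weakly to the deterministic (hence continuous) element $0\in\mathbf{C}_2$; since the limit is a constant, this weak convergence upgrades to convergence in probability, $d(V_n,0)\toP 0$. Because the limit $0$ is invariant under every time-change and lies in $\mathbf{C}_2$, the Neuhaus topology and the uniform topology agree at $0$ --- the two-dimensional analogue of the classical fact that $J_1$-convergence to a continuous limit is uniform --- so that $d(V_n,0)\toP 0$ forces $\norm{V_n}_\infty \toP 0$. Combining this with (c) and the triangle inequality, the field $Z_n := V_n+W_n$ satisfies $\norm{Z_n}_\infty \le \norm{V_n}_\infty + \sup_{\bb{t}\in[0,1]^2}|W_n(\bb{t})| \toP 0$.

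It then remains to treat $X_n = U_n + Z_n$. Using the domination of $d$ by the uniform distance, $d(X_n,U_n)=d(U_n+Z_n,U_n)\le \norm{Z_n}_\infty \toP 0$, and since $U_n \Law X$ by (a), the converging-together lemma yields $X_n \Law X$ in $\mathbf{D}_2$, as required. I expect the main obstacle to be the rigorous justification of the upgrade $d(V_n,0)\toP 0 \Rightarrow \norm{V_n}_\infty \toP 0$: this is the one place where continuity of the limit is essential and where one must appeal to the fine structure of the Neuhaus metric rather than to generic metric-space facts. Concretely, one needs the deterministic implication that $d(f_n,0)\to 0$ entails $\norm{f_n}_\infty \to 0$ (equivalently, continuity at $0$ of the functional $f\mapsto \norm{f}_\infty$ for the Skorohod topology), after which the passage to convergence in probability is routine via the inclusion $\{\norm{V_n}_\infty \ge \eps\}\subseteq \{d(V_n,0)\ge \delta\}$.
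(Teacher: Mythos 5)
Your proof is correct, but it follows a genuinely different route from the one in the paper. The paper's argument is a tightness argument: it invokes Wichura's theorem to extract, from the weak convergence of $U_n$ and $V_n$ in $\mathbf{D}_2$, the asymptotic equicontinuity conditions $\lim_{\delta\to 0}\limsup_n \Prob\{\omega_\delta(U_n)>\eps\}=0$ and likewise for $V_n$ (with $\omega_\delta$ the modulus of continuity), combines them via the triangle inequality $\omega_\delta(X_n)\le \omega_\delta(U_n)+\omega_\delta(V_n)+\omega_\delta(W_n)$ and the crude bound $\omega_\delta(W_n)\le 2\sup_{\bb{t}}|W_n(\bb{t})|$, and then concludes through Neuhaus's weak-convergence criterion. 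You instead reduce to the converging-together (Slutsky) lemma in the separable metric space $(\mathbf{D}_2,d)$, which only requires $d(X_n,U_n)\toP 0$; this follows from $d(f,g)\le\norm{f-g}_\infty$ once you know $\norm{V_n}_\infty+\norm{W_n}_\infty\toP 0$. Your one delicate step --- upgrading $V_n\Law 0$ to $\norm{V_n}_\infty\toP 0$ --- is handled correctly and is in fact even easier than you suggest: since the zero function is fixed by every time-change, $d(f,0)$ \emph{equals} $\norm{f}_\infty$, so no appeal to the general ``Skorohod convergence to a continuous limit is uniform'' principle is needed. Your route is shorter and avoids the modulus-of-continuity machinery (and the implicit need to identify the finite-dimensional limits of $X_n$ that underlies the Neuhaus criterion); the paper's route is the one that would survive a weakening of hypothesis (c) to mere asymptotic equicontinuity of $W_n$, since it never requires the perturbation to be small in supremum norm all at once.
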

In particular, applying Lemma \ref{Lem:UVW} to $(X_n,U_n,V_n,W_n) = (X_E,X_E[4], X_E[2], R_E)$ in \eqref{Eq:XE24R} (and noting that $\Prob\{ X_E[2] \in \mathbf{C}_2\}=\Prob\{ X_E[4] \in \mathbf{C}_2\}=1$, for every $E>0)$, suggests the following three-step strategy for positively answering {\bf Question A}:
\begin{enumerate}[label=\textbf{(\Roman*)}]
\item  prove  that the projection $X_E[4]$ converges weakly to a standard  Wiener sheet as $E\to \infty$;
\item prove that the second chaotic projection $X_E[2]$ associated with $X_E$ converges weakly to zero, as $E\to \infty$;
\item prove that the residual term $R_E$ converges uniformly to zero in probability, as $E\to \infty$. 
\end{enumerate}

\medskip

For ease of reference, we will refer to {\bf (I)} -- {\bf (III)} as ``{\bf Strategy S}''. The next result, proved in \cite[Theorem 3.4]{PV:20}, settles Point {\bf (I)}. 
\begin{Thm}[\cite{PV:20}, Theorem 3.4]\label{Thm:DomX4}
For every fixed ${\bf t} \in [0,1]^2$, one has that, as $E\to \infty$,
\begin{eqnarray*}
\E{\( X_E({\bf t}) - X_E[4]({\bf t})\)^2} \to 0.
\end{eqnarray*}
Moreover, as $E\to \infty$, $X_E[4]$ converges weakly to $\mathbf{W}$ in the Skorohod space $\mathbf{D}_2$. 
\end{Thm}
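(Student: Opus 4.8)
The plan is to prove the two assertions separately: first the $L^2$ estimate at a fixed point, then the functional convergence $X_E[4]\Law\mathbf{W}$ in $\mathbf{D}_2$ via the classical combination of finite-dimensional convergence and tightness. The first assertion is essentially a restatement of the fact that, in the high-energy limit, the variance of Berry's nodal length is asymptotically carried by its fourth Wiener chaos; the second, and genuinely harder, assertion rests on \emph{uniform} variance estimates for the fourth-chaos increments over rectangles.

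For the $L^2$ estimate, I would start from the decomposition in \eqref{Eq:XE24R}, which gives $X_E(\bb{t})-X_E[4](\bb{t})=X_E[2](\bb{t})+R_E(\bb{t})$. Since Wiener chaoses of different orders are orthogonal in $L^2$ and each projection of positive order is centered, all cross terms vanish and
$$\E{\left(X_E(\bb{t})-X_E[4](\bb{t})\right)^2}=\V{X_E[2](\bb{t})}+\V{R_E(\bb{t})}.$$
By the normalization built into \eqref{CLT1}, each summand equals $\tfrac{512\pi}{\log E}$ times the variance of the corresponding chaotic projection of the unnormalized length $\mathscr{L}_E([0,t_1]\times[0,t_2])$. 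The variance asymptotics of Theorem~\ref{thm_L}--\textbf{1.} give $\V{\mathscr{L}_E([0,t_1]\times[0,t_2])}\sim t_1t_2\,\tfrac{\log E}{512\pi}$, while the fourth-chaos dominance established in \cite{NPR:19} yields that the fourth-chaos projection already accounts for this full variance. Subtracting, the variances of the second chaos and of all chaoses of order $\ge 6$ are therefore $o(\log E)$, so that, after the $1/\log E$ normalization, both $\V{X_E[2](\bb{t})}\to 0$ and $\V{R_E(\bb{t})}\to 0$. This proves the first assertion.

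Turning to the functional statement, I would first upgrade Proposition~\ref{p:fdd} to the fourth chaos. Fixing $\bb{t}^{(1)},\dots,\bb{t}^{(d)}\in[0,1]^2$, Proposition~\ref{p:fdd} gives weak convergence of $(X_E(\bb{t}^{(i)}))_{i\le d}$ to $(\mathbf{W}(\bb{t}^{(i)}))_{i\le d}$, while the first assertion, applied in each coordinate, shows that $(X_E(\bb{t}^{(i)})-X_E[4](\bb{t}^{(i)}))_{i\le d}$ tends to $0$ in probability; Slutsky's lemma then transfers the limit to $(X_E[4](\bb{t}^{(i)}))_{i\le d}$, giving convergence of the finite-dimensional distributions of $X_E[4]$ to those of $\mathbf{W}$. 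As the limit is almost surely continuous, it remains to prove tightness of $\{X_E[4]:E>0\}$ in $\mathbf{D}_2$. For this I would invoke the tightness criterion of Davydov and Zitikis, which reduces tightness in $\mathbf{D}_2$ to a moment estimate for the increments $X_E[4](B)$ over blocks $B=(s_1,t_1]\times(s_2,t_2]\subseteq[0,1]^2$. Crucially, $X_E[4]$ takes values in the fixed fourth Wiener chaos, so Gaussian hypercontractivity makes all of its moments comparable to its $L^2$ norm, namely $\E{|X_E[4](B)|^{2p}}\le C_p\,\V{X_E[4](B)}^p$, and the whole tightness problem collapses to a single variance bound on rectangular increments.

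The main obstacle is precisely this bound: establishing
$$\V{X_E[4](B)}\le C\,\mathrm{area}(B)$$
for every block $B$ and every large $E$, with $C$ independent of both. The pointwise asymptotics of Theorem~\ref{thm_L} deliver this only on a \emph{fixed} rectangle and only in the limit $E\to\infty$; promoting it to a bound uniform over all blocks -- including arbitrarily small or thin ones, where boundary contributions are delicate -- requires writing $\V{\mathscr{L}_E[4](B)}$ as a double integral over $B\times B$ of the explicit fourth-chaos kernel built from $J_0(2\pi\sqrt E\,\norm{x-y})$ and its derivatives, and controlling that integral through the asymptotics of the Bessel function (the logarithmic factor arising from separations down to the wavelength scale $E^{-1/2}$). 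Once this uniform estimate is available, the hypercontractivity bound gives $\E{|X_E[4](B)|^{2p}}\le C_p'\,\mathrm{area}(B)^p$, and choosing $p\ge 2$ yields an exponent strictly larger than $1$ on $\mathrm{area}(B)$, which is the superlinear control required by the Davydov--Zitikis criterion; combined with the finite-dimensional convergence above, this gives $X_E[4]\Law\mathbf{W}$ in $\mathbf{D}_2$ and completes the proof.
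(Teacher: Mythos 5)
First, note that the paper does not prove this statement at all: Theorem \ref{Thm:DomX4} is imported verbatim from \cite{PV:20} (Theorem 3.4 there), so there is no internal proof to compare yours against. Judged on its own terms, your outline gets the easy half right and leaves the hard half as a declared obstacle. The $L^2$ statement does follow from chaos orthogonality together with the variance estimates of \cite{NPR:19}: after the $1/\log E$ normalization both $\V{X_E[2](\bb{t})}$ and $\V{R_E(\bb{t})}$ tend to zero (the present paper records exactly the bounds you need in Theorem \ref{AsyVar} and Lemma \ref{Cheby}), and the Slutsky argument deducing finite-dimensional convergence of $X_E[4]$ from Proposition \ref{p:fdd} is correct.

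The gap is in the tightness step. The entire mathematical content of \cite[Theorem 3.4]{PV:20} is the uniform moment bound for fourth-chaos rectangular increments, $\V{X_E[4](B)}\le C\,\mathrm{area}(B)$ with $C$ independent of $E$ and of the block $B$, and you explicitly do not prove it --- you call it ``the main obstacle'' and describe what a proof would involve. Saying that it follows from ``controlling that integral through the asymptotics of the Bessel function'' is a research plan, not an argument: for blocks whose width is comparable to or below the wavelength $E^{-1/2}$ the kernel $J_0(2\pi\sqrt{E}\,\norm{x-y})$ is not yet in its oscillatory regime, the logarithm in the variance is generated by separations between $E^{-1/2}$ and $\mathrm{diam}(B)$, and uniformity of the constant over thin or tiny blocks is precisely where the work lies. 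In addition, the Davydov--Zitikis criterion as used in this paper (Theorem \ref{Thm:DZ1Ch}) is not a pure moment criterion: besides the increment bound at scales $\norm{\bb{t}-\bb{s}}\ge\alpha_n$ it requires the separate small-scale condition (c) on the modulus of continuity $\omega_{Y_n}(\alpha_n)$, which for $X_E[2]$ the authors must verify by a genuinely different argument (supremum estimates for stationary Gaussian fields, Proposition \ref{MomChapter}, inside the proof of Proposition \ref{PropTight2}); your sketch does not address the analogous condition for $X_E[4]$, nor does it distinguish this criterion from the Bickel--Wichura block criterion that your $\mathrm{area}(B)^p$ bound is really aimed at. So the architecture is sound, but the theorem's actual content is missing.
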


\medskip

As explained in the forthcoming Section \ref{s:main}, the principal aim of the present work is to fully address Point {\bf (II)}, as well as to provide some decisive progress towards a full achievement of Point {\bf (III)}. We will see in Section \ref{ss:second} that our way of attacking Point {\bf (II)} reveals an intriguing connection with the CLTs for zeros of Gaussian entire functions established in \cite{BS:17, ST:04}. On the other hand, our analysis of Point {\bf (III)} will allow one to prove functional convergence results for some {\bf discretized versions} of the nodal fields $\{ X_E : E>0\}$ --- see Section \ref{ss:approximate}.

%
%
%

\section{Main results}\label{s:main}

\subsection{Second chaos and total disorder}\label{ss:skhoro}

Let $\D$ be a planar domain with piecewise $C^1$ boundary $\partial \D$. 
In \cite[Lemma 4.1]{NPR:19}, the authors prove that the projection on the second Wiener chaos of the nodal length $\mathscr{L}_E(\mathcal{D})$, as defined in \eqref{e:length}, is given by
\begin{eqnarray}\label{Second}
\mathscr{L}_E[2](\mathcal{D})= \frac{1}{8\pi \sqrt{2E}} \int_{\partial\mathcal{D}} B_E(x) \scal{\nabla B_E(x)}{\n_{ \mathcal{ D}}(x)} \mathcal{H}^1(dx),
\end{eqnarray}
where $\n_{ \mathcal{D}}(x)=(\n^1_{ \mathcal{D}}(x),\n^2_{ \mathcal{D}}(x))$ is the outward unit normal vector to $\partial\mathcal{D}$ at $x$. One of the main contributions of our work is a full characterization of the joint fluctuations of the random variables $\mathscr{L}_E[2](\mathcal{D})$, as $E\to\infty$, whenever $\mathcal{D}$ is a polygonal domain. 

\subsubsection{Some random fields indexed by curves}

We will actually study the fluctuations of \eqref{Second} in the context of slightly more general random objects. 

\begin{Def}\label{d:chains}\begin{itemize}
\item[(a)] An {\bf oriented segment} $S$ is the image of a mapping 
\begin{align*}
\gamma &: [0, L] \to \R^2 \\
&: \quad t \,\,\,\,\,\mapsto p + t (\cos\theta, \sin\theta),
\end{align*}
where $L>0$ is the {\bf length} of $S$, $p\in \R^2$ and $\theta \in [0, 2\pi )$. 

\item[(b)] A (simple) {\bf polygonal chain} is an ordered collection $\mathcal{C} = (S_1,...,S_m)$ of oriented segments such that $\gamma_k (L_k) = \gamma_{k+1}(0) = p_{k+1}$ (with obvious notation), for all $k=1,...,m-1$, and the union $\cup S_i$ defines a simple curve in $\R^2$; we will say that $\mathcal{C}$ is {\bf closed} if $\gamma_m(L_m) = \gamma_1(0)$. The class of all polygonal chains is denoted by $\mathscr{C}$. In the discussion below, we will often identify a chain $\mathcal C$ with the oriented curve defined by the union $\cup S_i$. 

\item[(c)] Given $\mathcal{C} = (S_1,...,S_m) \in \mathscr{C}$ and $x$ belonging to the relative interior of $S_k$, $k=1,...,m$, we define $\n_{\mathcal C} (x) = (-\sin \theta_k, \cos\theta_k)$ to be the {\bf unit normal vector} to $\mathcal{C}$ at the point $x$. The definition of $\n_{\mathcal C} (x) $ is arbitrarily extended (for instance, by setting $\n_{\mathcal C} (x) =0$) to the remaining (finitely many) points $x$ in $\mathcal{C}$. If the chain $\mathcal{C}$ coincides with a single segment $S$, then $\n_{\mathcal C} (x) = \n_S(x)$ is independent of the choice of $x$ and we simply write $\n_S$ to indicate the normal vector common to all elements of the relative interior of $S$.

\item[(d)] Given two oriented segments $S_1, S_2$ we define the {\bf signed length} of the intersection $S_1\cap S_2$ as $\lambda (S_1, S_2) := {\rm length} (S_1\cap S_2) \,  \langle \n_{S_1}, \n_{S_2} \rangle$, where $\langle \cdot, \cdot \rangle$ stands for the usual Euclidean inner product; plainly, if ${\rm length} (S_1\cap S_2)>0$, then $\langle \n_{S_1}, \n_{S_2}\rangle$ equals either 1 or $-1$, depending on whether $S_1$ and $S_2$ have the same, or opposite, orientations. We extend the definition of $\lambda$ to all pairs $\mathcal{A} = (T_1,...,T_n), \,  \mathcal{C} = (S_1,....,S_m)\in \mathscr{C}$ by setting
\begin{equation}\label{signed}
\lambda(\mathcal{A} , \mathcal{C}) := \sum_{i=1}^n \sum_{k=1}^m \lambda (T_i, S_k). 
\end{equation}
It is easily checked that
\begin{equation}\label{signed2}
\lambda(\mathcal{A} , \mathcal{C}) = \int_{\mathcal{ C} \cap \mathcal {A}}  \langle \n_{\mathcal{C}}(x),\n_{\mathcal{A}}(x) \rangle \,  \mathcal{H}^1(dx).   
\end{equation}
The reader is referred to \cite[Definition 3]{BS:17}, and the discussion therein, for more properties of signed lengths. 
\end{itemize}
\end{Def}

\medskip

For $\mathcal{C} = (S_1,..., S_m) \in \mathscr{C}$ and $E>0$, we now define the random variable
\begin{eqnarray}\label{Phi}
\phi_E(\mathcal{C})&:=& \frac{1}{8\pi \sqrt{2E}} \int_{\mathcal{C}} B_E(x) \scal{\nabla B_E(x)}{\n_{\mathcal{C}}(x)} \mathcal{H}^1(dx) \\
&=&  \frac{1}{8\pi \sqrt{2E}}  \sum_{k=1}^m \int_0^{L_k } B_E(\gamma_k(t) ) \langle \nabla B_E(\gamma_k(t)), \n_{S_k} \rangle dt,\label{Phi2}
\end{eqnarray}
where \eqref{Phi2} is a straightforward representation of \eqref{Phi} in terms of line integrals.

\begin{Rem}\label{r:chains}
\begin{itemize}

\item[(a)] When $\mathcal{C} \in \mathscr{C}$ is clockwise oriented and closed, then \cite[Lemma 4.1]{NPR:19} implies that $\phi_E(\mathcal{C}) = \mathscr{L}_E[2](\mathcal{D}),$ where $\mathcal{D}$ is the polygonal domain enclosed by $\mathcal{C}$ (if $\mathcal{C}$ is counterclockwise oriented, then the equality continues to hold, but with a minus sign in front of the right-hand side). {\it From now on, we will conventionally assume that the boundary $\partial\mathcal{D}$ of any polygonal region $\mathcal{D}$ is a clockwise oriented closed chain}.

\item[(b)] Let $\mathcal D$ be a polygonal domain, fix $R>0$, and select $E>0$ in such a way that $R = 2\pi\sqrt{E}$. Then, a direct computation (based e.g. on the arguments developed in \cite[Proof of Lemma 4.1]{NPR:19}) shows that 
\begin{equation}\label{e:rescaling}
\phi_E(\partial\mathcal{D}) =\mathscr{L}_E[2](\mathcal{D})  \eqLaw R^{-1} \mathscr{L}(b; R\cdot\mathcal{D})[2],
\end{equation}
where $\mathscr{L}(b; R \cdot \mathcal{D})$ denotes the nodal length of the field $b$ (as defined in Section \ref{ss:model}), restricted to the region $R\cdot \mathcal{D}$.

\item[(c)]  In general, it is easy to see that, for every $\CC\in \mathscr{C}$, the random variable $\phi_E(\mathcal{C})$ is an element of the second Wiener chaos associated with $B_E$. Indeed, denoting by $I_p$ the Wiener isometry of order $p$, and writing $B_E(x)=I_1(f^E_0(x,\cdot)), \partial_jB_E(x)=\sqrt{2\pi^2E} I_1(f^E_j(x,\cdot)), j=1,2$, for suitable  kernels $f^E_j(x,\cdot), j=0,1,2$, defined e.g. on the Hilbert space $L^2([0,1], dx)$ (with $dx$ denoting the Lebesgue measure), 
one can show that 
$\phi_E(\mathcal{C})=I_2(u^E(\CC))$, where 
\begin{eqnarray}\label{kernel}
u^E(\CC) = \frac{1}{8} \sum_{j=1}^2 \int_{\CC} f_0^E(x,\cdot) \widetilde{\otimes}
f^E_j(x,\cdot) \n^j_{\CC}(x) \mathcal{H}^1(dx),
\end{eqnarray}
 where $\widetilde{\otimes}$ denotes the canonical symmetrization of the tensor product, and $\n_{\mathcal{C}}(x)=(\n_{\mathcal{C}}^1(x), \n_{\mathcal{C}}^2(x))$. We refer the reader to the proof of Proposition \ref{PropCLTLS} for more details, and to \cite[Chapter 2]{NP:12} for background on Wiener chaos.

\item[(d)] It is of course possible to extend the above definitions to the case where $\mathcal{C}$ is a piecewise smooth simple curve of finite length (or even to the case where $\mathcal{C}$ is a $\R$-{\it chain}, in the sense of \cite[Definition 1]{BS:17}). Since our techniques only allow one to deal with the case of polygonal chains, we decided not to pursue such a level of generality. 

\end{itemize}

\end{Rem}

\subsubsection{Second order results}\label{ss:second}

We now state our main results concerning the random variables 
$\phi_E(\CC)$. The first statement characterizes their asymptotic 
covariance structure in the high-energy regime.
\begin{Thm}[Asymptotic covariance structure]\label{AsyVar} For every $\CC_1,\CC_2\in\mathscr{C}$, as $E\to \infty$, 
\begin{eqnarray}\label{Eq:AsyCov}
\Cov{\phi_E(\CC_1)}{\phi_E(\CC_2)} 
= \frac{\lambda(\CC_1,\CC_2)}{16\pi^2\sqrt{E}} + o\(\frac{1}{\sqrt{E}}\),
\end{eqnarray}
where $\lambda(\CC_1,\CC_2)$ indicates the signed length of $\CC_1\cap\CC_2$, as introduced in Definition \ref{d:chains}-{\rm (d)}. In particular, as $E\to \infty$,  
\begin{eqnarray}\label{Eq:AsyVar}
\V{\phi_E(  \CC)} 
= \frac{{\rm length} (\CC)}{16\pi^2\sqrt{E}} + o\left(\frac{1}{\sqrt{E}}\right).
\end{eqnarray}
\end{Thm}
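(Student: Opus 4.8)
The plan is to use that, by Remark \ref{r:chains}-(c), each $\phi_E(\CC)$ lies in the second Wiener chaos and is therefore centered, so that $\Cov{\phi_E(\CC_1)}{\phi_E(\CC_2)} = \E{\phi_E(\CC_1)\phi_E(\CC_2)}$. Writing $\CC_1 = (T_1,\dots,T_n)$, $\CC_2 = (S_1,\dots,S_m)$ and using the segment-wise form \eqref{Phi2}, both the expectation and the signed length $\lambda(\CC_1,\CC_2)=\sum_{i,k}\lambda(T_i,S_k)$ (cf. \eqref{signed}) split into sums over pairs of segments. Hence, with $k:=2\pi\sqrt E$, it suffices to prove for a single pair $(T,S)$ of oriented segments, with unit normals $\n_T,\n_S$, that
\begin{equation*}
\frac{1}{128\pi^2 E}\int_T\int_S \E{B_E(x)\scal{\nabla B_E(x)}{\n_T}\,B_E(y)\scal{\nabla B_E(y)}{\n_S}}\,\mathcal{H}^1(dx)\,\mathcal{H}^1(dy) = \frac{\lambda(T,S)}{16\pi^2\sqrt E} + o\!\left(\frac1{\sqrt E}\right),
\end{equation*}
the prefactor being the square of $1/(8\pi\sqrt{2E})$, and then to sum over $i,k$.

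First I would evaluate the integrand by Isserlis' (Wick's) formula, which writes the fourth Gaussian moment as a sum of three products of covariances. Setting $g(z):=J_0(k\norm z)$, so that $\E{B_E(x)B_E(y)}=g(x-y)$ by \eqref{covE}, the pairing coupling each value with the gradient at the \emph{same} point vanishes, since $\E{B_E(x)\scal{\nabla B_E(x)}{\n}}=\tfrac12\,\partial_{\n}\E{B_E(x)^2}=0$ (the variance being constant). The two surviving pairings give, with $z=x-y$,
\begin{equation*}
g(z)\,\Big(-\sum_{a,b}\n_T^a\n_S^b\,\partial_a\partial_b g(z)\Big)\;-\;\Big(\sum_a\n_T^a\,\partial_a g(z)\Big)\Big(\sum_b\n_S^b\,\partial_b g(z)\Big),
\end{equation*}
an explicit expression in $J_0$, $J_1=-J_0'$ and their derivatives.

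The leading contribution comes only from pairs that overlap on a set of positive length; for these I would rotate coordinates so that the common support lies on an axis with normal $(0,1)$. On the overlap one has $z_2=0$, so $\partial_2 g(z)=0$, the second pairing drops out, and using $\partial_2^2 g(z)=-kJ_1(k\rho)/\rho$ at $z_2=0$ (with $\rho=|s-t|$) the integrand collapses to $\scal{\n_T}{\n_S}\,kJ_0(k\rho)J_1(k\rho)/\rho$. Exploiting the primitive $J_0J_1=-\tfrac12(J_0^2)'$ together with the classical identity $\int_0^\infty J_0(u)J_1(u)/u\,du=2/\pi$, and rewriting the double integral as $\int F(|w|)\,|T\cap(S+w)|\,dw$ — a convolution of $F(w)=kJ_0(kw)J_1(kw)/w$ against the overlap indicator, which concentrates at scale $1/k$ and therefore localizes to the common sub-segment — this contribution equals $8\sqrt E\,\lambda(T,S)+o(\sqrt E)$, i.e. $\lambda(T,S)/(16\pi^2\sqrt E)+o(1/\sqrt E)$ after the prefactor.

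The hard part will be to show that the remaining pairs — disjoint segments, transversal crossings, and non-overlapping parallel segments — contribute only $o(\sqrt E)$, so that they vanish after division by $E$. The naive absolute-value bound fails: the asymptotics $J_0(k\rho),J_1(k\rho)=O((k\rho)^{-1/2})$ make the amplitude of size $k/\rho$, whose modulus integrates to $O(\sqrt E)$ rather than $o(\sqrt E)$, so one must genuinely use the oscillation $e^{\pm 2ik\rho(s,t)}$ with $\rho(s,t)=\norm{\alpha(s)-\beta(t)}$. I would treat this by non-stationary phase / integration by parts where $\nabla_{s,t}\rho\neq 0$ (disjoint, non-parallel), and by stationary phase at the finitely many degeneracies: an isolated non-degenerate stationary point at a transversal crossing, gaining $k^{-1}$ after excising the integrable region $\rho\lesssim 1/k$ where the Bessel asymptotics break down; and a one-dimensional valley of stationary points for parallel offset segments, gaining $k^{-1/2}$. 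Each case yields $O(1)$ or $O(E^{1/4})=o(\sqrt E)$, which is the crux. Finally, the variance \eqref{Eq:AsyVar} follows on taking $\CC_1=\CC_2=\CC$: since $\CC$ is simple, distinct segments meet in at most one point, whence $\lambda(\CC,\CC)=\sum_i\lambda(S_i,S_i)=\sum_i\mathrm{length}(S_i)=\mathrm{length}(\CC)$.
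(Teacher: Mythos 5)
Your skeleton coincides with the paper's: both reduce by bilinearity to pairs of oriented segments, apply the Wick/Feynman formula, discard the same-point value--gradient pairing, and extract the leading term from segments overlapping on a set of positive length via the product $J_0J_1$ and its antiderivative. Your main-term computation is correct, including the constant: the contribution $8\sqrt{E}\,\lambda(T,S)$ against the prefactor $(8\pi\sqrt{2E})^{-2}=(128\pi^2E)^{-1}$ does give $\lambda(T,S)/(16\pi^2\sqrt{E})$, matching what the paper obtains through the antiderivative $v(J_0(v)^2+J_1(v)^2)-J_0(v)J_1(v)$ of $J_0(v)J_1(v)/v$ and the limit $v(J_0(v)^2+J_1(v)^2)\to 2/\pi$. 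You are also right that the naive absolute-value bound on the remaining configurations only yields $O(E^{-1/2})$, i.e.\ the same order as the main term, so that the oscillation must be used.

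Where you diverge is in how the non-overlapping configurations are handled, and this is also where your argument stops being a proof: you correctly flag the $o(\sqrt{E})$ bound for transversal crossings and offset parallel segments as ``the crux'' but only sketch a stationary/non-stationary phase scheme, without carrying out the excision near $\rho\lesssim 1/k$ (where the Bessel asymptotics and the phase function both degenerate) or the integration by parts. The paper avoids stationary phase altogether: for parallel segments on distinct lines the crude bound $(v^2+(2\pi\sqrt{E}L)^2)^{-1}\le (2\pi\sqrt{E}L)^{-2}$ already gives $O(E^{-1})$; for non-parallel segments it passes to polar coordinates centred at the intersection of the supporting lines, splits the covariance into the terms $A_E$ and $B_E$, integrates $2J_0J_1=-\,(J_0^2)'$ exactly in the radial variable to get $|A_E|=O(E^{-1})$, and for $B_E$ uses the expansion $\psi(J_0J_2+J_1^2)(\psi)=\tfrac{2}{\pi}\sin(2\psi)+O(\psi^{-1})$ so that the oscillating part integrates to a bounded quantity and only a logarithm survives, giving $O(E^{-1}\log E)$. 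So the oscillation is exploited, but through explicit antiderivatives and asymptotic expansions of Bessel products rather than a general phase analysis. Your route is viable, but as written the decisive estimates for the crossing and offset cases are asserted rather than proved; to close the argument you would either need to execute the phase analysis (checking in particular that the only degeneracy of the phase $2k\|\alpha(s)-\beta(t)\|$ on a transversal pair is the singular point $\rho=0$, and that its excised neighbourhood contributes $O(1)$ before the prefactor), or fall back on the paper's explicit computation. The final deduction of \eqref{Eq:AsyVar} from \eqref{Eq:AsyCov} via $\lambda(\CC,\CC)=\mathrm{length}(\CC)$ is correct and identical to the paper's.
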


\smallskip

Specializing the content of Theorem \ref{AsyVar} to the case where $\CC=\partial\D$ is the boundary of a polygonal domain $\mathcal{D}\subset\R^2$ and bearing in mind Remark \ref{r:chains}-(a), one infers that
\begin{eqnarray*}
\V{\mathscr{L}_E[2](   \D)} 
= \frac{{\rm length} (\partial\D)}{16\pi^2\sqrt{E}} + o\left(\frac{1}{\sqrt{E}}\right),
\end{eqnarray*}
as $E\to\infty$. 
This estimate refines 
the upper bound $O(1)$ for the variance of  $\mathscr{L}_E[2](   \D)$ derived in \cite[Lemma 4.1]{NPR:19} and shows in particular that $\mathscr{L}_E[2](\mathcal{D})$ vanishes in the high-energy limit.

\medskip
In view of \eqref{Eq:AsyVar}, we introduce the class of normalized random variables: 
\begin{eqnarray}\label{normal}
\wt{\phi}_{E}(  \CC) := 4\pi E^{1/4} \phi_E(  \CC), \quad \CC\in \mathscr{C}.
\end{eqnarray}
The following statement establishes a multidimensional convergence result -- in the sense of finite-dimensional distributions -- for the random field
$\{ \wt{\phi}_{E}(  \CC): \CC\in \mathscr{C}\}$. 
\begin{Thm}\label{CLTd}
For every integer $d\geq 1$ and every  \allowbreak $\CC_1,\ldots, \CC_d \in \mathscr{C}$, we have that, as $E\to \infty$, 
\begin{eqnarray*}
\left( 
\wt{\phi}_{E}(\CC_1), \ldots, \wt{\phi}_{E}(\CC_d)\right) 
\Law \mathcal{N}_d(0, \Sigma),
\end{eqnarray*}
where $\Sigma=\{\Sigma(i,j):i,j=1,\ldots,d\}$ is the $d\times d$ matrix defined by
\begin{eqnarray}\label{e:sigmalength}
\Sigma(i,j) :=  \lambda(\CC_i,\CC_j) \ , \quad i,j=1,\ldots, d.
\end{eqnarray}
\end{Thm}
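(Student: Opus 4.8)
The plan is to recognize that, by Remark~\ref{r:chains}-(c), every $\wt{\phi}_E(\CC_i)$ is a centered element of the \emph{second} Wiener chaos associated with $B_E$, so that the vector in question is a vector of double Wiener--It\^o integrals. Writing $g^E_i := 4\pi E^{1/4}\, u^E(\CC_i)\in\HS^{\odot 2}$, where $u^E(\CC_i)$ is the kernel in \eqref{kernel}, the normalization \eqref{normal} gives $\wt{\phi}_E(\CC_i)=I_2(g^E_i)$. For such vectors the natural tool is the Peccati--Tudor theorem (see e.g.\ \cite[Theorem 6.2.3]{NP:12}), which reduces a multidimensional central limit theorem to a one-dimensional one once the covariance matrix is shown to converge: it suffices to establish (a) convergence of the covariances to $\Sigma$, and (b) marginal asymptotic Gaussianity of each $\wt{\phi}_E(\CC_i)$.

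First I would settle the covariance convergence. Since each $\wt{\phi}_E(\CC_i)$ is centered, Theorem~\ref{AsyVar}, \eqref{Eq:AsyCov} and the normalization \eqref{normal} yield, as $E\to\infty$,
\begin{equation*}
\E{\wt{\phi}_E(\CC_i)\wt{\phi}_E(\CC_j)}=16\pi^2\sqrt{E}\,\Cov{\phi_E(\CC_i)}{\phi_E(\CC_j)}=\lambda(\CC_i,\CC_j)+o(1)\longrightarrow \Sigma(i,j),
\end{equation*}
which is exactly \eqref{e:sigmalength}; note in particular that the diagonal limit is $\Sigma(i,i)=\lambda(\CC_i,\CC_i)={\rm length}(\CC_i)$, consistently with \eqref{Eq:AsyVar}. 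With this in hand, by Peccati--Tudor it remains only to prove that, for each fixed $\CC\in\mathscr{C}$, one has $\wt{\phi}_E(\CC)\Law \mathcal{N}(0,{\rm length}(\CC))$.

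Because $\wt{\phi}_E(\CC)=I_2(g^E)$ lives in a \emph{fixed} chaos of order $q=2$ and its variance converges, the Fourth Moment Theorem of Nualart--Peccati (\cite[Chapter 5]{NP:12}) applies, and the marginal CLT is equivalent to the vanishing of the single contraction norm
\begin{equation*}
\norm{g^E\otimes_1 g^E}_{\HS^{\otimes 2}}=16\pi^2\sqrt{E}\,\norm{u^E(\CC)\otimes_1 u^E(\CC)}_{\HS^{\otimes 2}}\longrightarrow 0,
\end{equation*}
i.e.\ to the estimate $\norm{u^E(\CC)\otimes_1 u^E(\CC)}=o(E^{-1/2})$. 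I expect this contraction estimate to be the main obstacle. Using \eqref{kernel} together with the fact that the inner products $\scal{f^E_a(x,\cdot)}{f^E_b(y,\cdot)}$ are exactly $r^E(x-y)$ and its first and second partial derivatives (up to the normalizing constants $\sqrt{2\pi^2 E}$), the squared contraction norm $\norm{u^E(\CC)\otimes_1 u^E(\CC)}^2$ can be written as a fourfold line integral over $\CC^4$ of a \emph{cyclic} product of four such kernels evaluated at the consecutive pairs $(x_1,x_2),(x_2,x_3),(x_3,x_4),(x_4,x_1)$, weighted by products of the normal components $\n^j_{\CC}$. Inserting the high-energy asymptotics of $J_0$ and its derivatives — each normalized kernel being bounded, up to constants, by $(1+\sqrt{E}\,\norm{x-y})^{-1/2}$ and oscillating at frequency $\sqrt{E}$ — I would estimate this integral segment by segment. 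The finitely many corners of the chain and the near-diagonal regions $\{x_i\approx x_{i+1}\}$ call for separate but routine treatment (the corner set is negligible, and the near-diagonal part is controlled by the $L^1$-decay of the kernel). The decisive point is that, beyond the trivial bound $\norm{u^E(\CC)\otimes_1 u^E(\CC)}\le \norm{u^E(\CC)}^2\asymp E^{-1/2}$, the oscillation of the Bessel kernel produces an extra vanishing factor, yielding the asymptotic decoupling characteristic of \emph{total disorder}; these oscillatory-integral bounds are of the same nature as those underlying Theorem~\ref{AsyVar} and constitute the technically heaviest part of the argument.

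As an alternative to invoking Peccati--Tudor, one may instead prove the one-dimensional CLT directly for every linear combination $\sum_i a_i\,\wt{\phi}_E(\CC_i)=I_2\!\left(\sum_i a_i\, g^E_i\right)$ and conclude via the Cram\'er--Wold device; by bilinearity of the contraction operator and the Cauchy--Schwarz inequality, this reduces once more to the single-chain estimate above, combined with the already-established convergence of the covariances.
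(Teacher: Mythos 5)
Your proposal is correct and follows the same architecture as the paper: Peccati--Tudor (\cite[Theorem 6.2.3]{NP:12}) reduces everything to the covariance asymptotics of Theorem~\ref{AsyVar} plus a marginal CLT obtained from the Fourth Moment Theorem via a vanishing contraction norm. The one organizational difference is where the contraction estimate is carried out. The paper never contracts the kernel of a full chain: it first proves the joint CLT for \emph{line segments} (Proposition~\ref{PropCLTLS}), where invariance under rigid motions reduces the contraction computation to a single axis-aligned segment $[0,L]\times\{0\}$, and then obtains $\wt{\phi}_E(\CC)=\sum_k\wt{\phi}_E(S_k)$ as a linear combination of the components of an asymptotically Gaussian vector --- so corners and mixed-segment configurations never enter the fourth-moment computation, only the covariance analysis of Proposition~\ref{PropCovSeg}. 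Your primary route (contracting $u^E(\CC)$ for the whole chain and treating corners and near-diagonal regions by hand) would work but is needlessly heavier; your alternative via linear combinations, with the identity $\norm{f\otimes_1 g}^2=\scal{f\otimes_1 f}{g\otimes_1 g}$ reducing everything to single-\emph{segment} (not single-chain) contractions, is essentially what the paper does. One small correction of emphasis: no oscillatory cancellation is needed for the contraction bound. The paper gets $\norm{k^E\otimes_1 k^E}^2=O(E^{-1/4})$ from the cyclic fourfold integral purely by absolute-value bounds $|J_\nu(x)|\le x^{-1/2}$, which give $\int|r^E|=O(E^{-1/4})$ and $\int|\tilde r^E_{2,2}|=O(E^{-1/2})$ over a fixed segment; the gain over the trivial bound $\norm{u^E}^2\asymp E^{-1/2}$ comes from integrability of the kernels, not from exploiting their oscillation.
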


Theorem \ref{CLTd} shows that, in the high-frequency regime, the covariance of $\wt{\phi}_{E}(\CC_1)$ and $\wt{\phi}_{E}(\CC_2)$ depends on the geometry of   $\CC_1$ and $\CC_2$ through the signed length of their intersection. In particular, when $\lambda(\CC_1 , \CC_2)$ is zero (which is the case when $\CC_1$ and $\CC_2$ intersect in finitely many points or are disjoint), $\wt{\phi}_{E}(\CC_1)$ and $\wt{\phi}_{E}(\CC_2)$ converge to independent centered Gaussian random variables with variances ${\rm length} (\CC_1)$ and ${\rm length}(\CC_2)$, respectively. 

\smallskip

Using the content of Remark \ref{r:chains}-(b), one can immediately deduce a joint CLT for the second chaos projections of Berry's nodal lengths on expanding domains.

\begin{Prop}\label{p:largedomain} Let $\mathcal{D}_1,..., \mathcal{D}_d$ be polygonal domains. Then, as $R\to \infty$,
\begin{equation}\label{e:scaledclt}
\sqrt{\frac{8\pi}{R} } \Big(\mathscr{L}(b; R\cdot\mathcal{D}_1)[2],...,\mathscr{L}(b; R \cdot\mathcal{D}_d)[2]    \Big)     \Law \mathcal{N}_d(0, \Sigma)
\end{equation}
where $\Sigma$ is given by \eqref{e:sigmalength} with $\CC_i = \partial\mathcal{D}_i$. In particular, one has that, for a fixed polygonal domain $\mathcal {D}$,
\begin{equation}\label{e:hu}
{\bf Var} \big(\mathscr{L}(b; R\cdot \mathcal{D})[2]\big) = \frac{ R \cdot {\rm length} (  \partial \mathcal{D}  )}{8\pi} + o(R), \quad R\to \infty. 
\end{equation}
\end{Prop}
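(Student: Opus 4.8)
The plan is to deduce the statement directly from the distributional rescaling recorded in Remark \ref{r:chains}-(b), combined with the finite-dimensional CLT of Theorem \ref{CLTd} and the variance asymptotics \eqref{Eq:AsyVar}. The crucial preliminary observation is that the identity \eqref{e:rescaling} is nothing but the self-similarity of the single field $b$: choosing $E>0$ with $R=2\pi\sqrt E$ one has $B_E(\cdot)\eqLaw b(R\,\cdot)$ as centered Gaussian fields (both having covariance $J_0(R\norm{x-y})$ by \eqref{covE}), and the prefactor $R^{-1}$ comes from the scaling $\mathcal{H}^1(R^{-1}A)=R^{-1}\mathcal{H}^1(A)$ of the one-dimensional Hausdorff measure. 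Since this is one rescaling of the whole field $b$, the identity upgrades to a \emph{joint} equality in distribution
$$\big(\mathscr{L}(b;R\cdot\D_1)[2],\ldots,\mathscr{L}(b;R\cdot\D_d)[2]\big)\eqLaw\big(R\,\phi_E(\partial\D_1),\ldots,R\,\phi_E(\partial\D_d)\big),$$
valid for every fixed $R$ and the corresponding $E=R^2/(4\pi^2)$.

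First I would carry out the bookkeeping of the normalizing constants. Inserting $E^{1/4}=(R/2\pi)^{1/2}$ into the definition \eqref{normal} of $\wt\phi_E$ gives $\wt\phi_E(\CC)=4\pi E^{1/4}\phi_E(\CC)=\sqrt{8\pi R}\,\phi_E(\CC)$, the constant $\sqrt{8\pi}$ arising since $(4\pi/\sqrt{2\pi})^2=8\pi$. Multiplying the previous display coordinatewise by $\sqrt{8\pi/R}$ then yields
$$\sqrt{\tfrac{8\pi}{R}}\,\big(\mathscr{L}(b;R\cdot\D_1)[2],\ldots,\mathscr{L}(b;R\cdot\D_d)[2]\big)\eqLaw\big(\wt\phi_E(\partial\D_1),\ldots,\wt\phi_E(\partial\D_d)\big).$$
Because $R\to\infty$ is equivalent to $E\to\infty$, the asserted convergence \eqref{e:scaledclt} now follows verbatim from Theorem \ref{CLTd} applied to $\CC_i=\partial\D_i$, the limiting covariance being exactly $\Sigma(i,j)=\lambda(\partial\D_i,\partial\D_j)$ as in \eqref{e:sigmalength}.

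Finally, the variance asymptotics \eqref{e:hu} follow from the same rescaling. From the single-domain case of the joint identity, $\V{\mathscr{L}(b;R\cdot\D)[2]}=R^2\,\V{\phi_E(\partial\D)}$; substituting \eqref{Eq:AsyVar}, namely $\V{\phi_E(\partial\D)}=\mathrm{length}(\partial\D)/(16\pi^2\sqrt E)+o(1/\sqrt E)$, and using $\sqrt E=R/(2\pi)$, turns the leading term into $R\cdot\mathrm{length}(\partial\D)/(8\pi)$ and the remainder into $R^2\cdot o(1/\sqrt E)=o(R)$, which is precisely \eqref{e:hu}.

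I do not anticipate a genuine obstacle here: the statement is essentially a corollary of the high-energy results already established for $\phi_E$, and the only points requiring care are (i) arguing that \eqref{e:rescaling} holds \emph{jointly} in the $d$ domains — which is immediate once one reads it as a single self-similar rescaling of $b$ rather than $d$ separate ones — and (ii) tracking the powers of $R$ and $\pi$ through the two normalizations so that the constant $8\pi$ comes out correctly.
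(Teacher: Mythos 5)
Your proof is correct and follows exactly the route the paper intends: Proposition \ref{p:largedomain} is stated as an immediate consequence of the rescaling identity \eqref{e:rescaling} in Remark \ref{r:chains}-(b) together with Theorem \ref{CLTd} and \eqref{Eq:AsyVar}, and your constant bookkeeping ($4\pi E^{1/4}=\sqrt{8\pi R}$ for $R=2\pi\sqrt{E}$) and the observation that the rescaling holds jointly across the $d$ domains are exactly the details the paper leaves implicit.
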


\begin{Rem}[Hyperuniform scalings and total disorder] 
\begin{itemize}

\item[(a)] Theorem \ref{CLTd} and Proposition \ref{p:largedomain} should be compared with the content of \cite[Theorem 1]{BS:17} (see also \cite{ST:04}). In such a reference, the authors consider the Gaussian entire function
$$
z\mapsto f(z) = \sum_{n=0}^\infty \zeta_n \frac{z^n}{n!}, \quad z\in \mathbb{C},
$$
where $\{\zeta_n : n\geq 0\}$ is a sequence of i.i.d. standard complex Gaussian random variables, and study the joint fluctuations of random variables of the type $\Delta_R(\mathcal{C})$, where $\mathcal{C}$ indicates a smooth simple curve of finite length (not necesarily polygonal) and, for $R>0$, $\Delta_R(\mathcal{C})$ represents the {\bf increment of the argument} of $f(Rz)$ along $\mathcal{C}$. The main finding of \cite{BS:17} is that, for the constant $c = \sqrt{\pi/4}\,  \zeta(3/2)$,
\begin{equation}\label{e:v1}
{\bf Var} (\Delta_R(\mathcal{C}) )= c R\cdot  {\rm length} (\mathcal{C})  + o(R), \quad R\to\infty,
\end{equation}
and 
\begin{equation}\label{e:clt1}
\frac{1}{\sqrt{cR}} \left( (\Delta_R(\mathcal{C}_1) - \mathbb{E}(\Delta_R(\mathcal{C}_1)),...,(\Delta_R(\mathcal{C}_d) - \mathbb{E}\Delta_R(\mathcal{C}_d))\right) \Law \mathcal{N}_d(0, \Sigma),
\end{equation}
where $\Sigma$ is given by \eqref{e:sigmalength} (once the notion of signed length is extended to generic smooth curves by using \eqref{signed2}). Specializing these findings to the case where each $\mathcal{C}_i$ is the counterclockwise oriented boundary of a smooth domain $\mathcal{D}_i$, and denoting by $n_R(\mathcal{D})$ the number of zeros of $f$ in the domain $R \cdot\mathcal{D}$, one deduces that, for $c_0 = \zeta(3/2) / (8 \pi^{3/2})$,
\begin{equation}\label{e:v2}
{\bf Var} \left(n_R(\mathcal{D}) )= c_0 R\cdot  {\rm length} ( \partial \mathcal{D} \right)  + o(R), \quad R\to\infty,
\end{equation}
and 
\begin{equation}\label{e:clt2}
\frac{1}{\sqrt{c_0R}} \left( (n_R(\mathcal{D}_1) - \mathbb{E}(n_R(\mathcal{D}_1)),...,(n_R(\mathcal{D}_d) - \mathbb{E}\n_R(\mathcal{D}_d))\right) \Law \mathcal{N}_d(0, \Sigma),
\end{equation}
where $\Sigma$ is given by \eqref{e:sigmalength} with $\CC_i = \partial\mathcal{D}_i$. This yields joint Gaussian fluctuations of the same nature as \eqref{e:scaledclt}.

\item[(b)] Variance estimates such as \eqref{e:hu} and \eqref{e:v2} mean that, as $R\to\infty$, the variances of $\mathscr{L}(b; R\cdot \mathcal{D})$ and $n_R(\mathcal{D})$ scale as the length of the boundary of $R\cdot \mathcal{D}$, rather than as ${\rm area} (R\cdot \mathcal{D}) \asymp R^2$. Such a behaviour emerges for many point processes that are relevant in modern probability and statistical mechanics, and is known as {\bf hyperuniformity} --- see Point (d) below, as well as \cite{GL:17, T:17} and the references therein. 

\item[(c)] Consider a centered Gaussian field $G = \{G(\mathcal{C}) : \mathcal{C} \in \mathscr{C}\}$ indexed by polygonal chains and such that $\mathbb{E}[G(\CC_1) G(\CC_2)] = \lambda(\CC_1, \CC_2)$. Then, it is easily seen that $G$ is necessarily a {\bf total disorder process}, that is, the linear span of $G$ contains an {\it uncountable} collection of i.i.d. centered Gaussian random variables with unit variance. The same conclusion holds if one replaces $\mathscr{C}$ with the collection of closed polygonal chains, or even with the set of closed chains that are the boundary of a rectangle. 

\item[(d)] A multivariate CLT similar to \eqref{e:scaledclt} and \eqref{e:clt2} appears in the physics paper \cite{Le:83} on charge fluctuations for Coulomb systems. Here, the author considers the net electric charge $Q_{\Lambda}$ contained in a subregion $\Lambda$ of an infinite equilibrium system and establishes joint CLTs for pairs $(Q_{\Lambda_1}, Q_{\Lambda_2})$, where $\Lambda_1,\Lambda_2$ are growing regions. For instance, for cubes $\Lambda_1, \Lambda_2$ of side length $L\to \infty$, it turns out that variance of the considered objects scales hyperuniformly as $\asymp L$, and the limiting covariance is only non-zero when the cubes share a face. Total disorder processes also appear in several works in random matrix theory. In \cite{Wi:02} (see also \cite[Theorem 6.3]{DE:01}) the authors consider the number $N_n(\alpha,\beta)$ of eigenvalues lying in a circular interval $(e^{i\alpha},e^{i\beta})$ of random $n\times n$ unitary matrices sampled according to the Haar measure. It is shown that the finite-dimensional distributions of the normalized process 
\begin{eqnarray*}
\{ \frac{N_n(\alpha,\beta)-\E{N_n(\alpha,\beta)}}{\pi^{-1}\sqrt{\log n}}: 0<\alpha<\beta<2\pi \}
\end{eqnarray*}
converge to those of a centred Gaussian process $\{Z(\alpha,\beta): 0<\alpha<\beta<2\pi\}$ with covariance function
\begin{eqnarray*}
\E{Z(\alpha,\beta) Z(\alpha',\beta')}
= 
\{
\begin{array}{ll}
1 &\alpha=\alpha', \beta=\beta'\\
-1 & \alpha=\beta', \alpha'=\beta\\
1/2 & \alpha=\alpha' \mathrm{ \ or \ } \beta=\beta' \mathrm{ \ but \ not \ both}\\
-1/2 & \alpha=\beta' \mathrm{ \ or \ } \beta=\alpha' \mathrm{ \ but \ not \ both}\\
0 & \alpha,\beta,\alpha',\beta' \mathrm{ \ distinct}
\end{array}
\right. .
\end{eqnarray*}
From such a covariance structure, it becomes clear that, unless two intervals $(e^{i\alpha},e^{i\beta})$ and $(e^{i\alpha'},e^{i\beta'})$ have at least one endpoint in common, the limiting random variables $Z(\alpha,\beta)$ and $ Z(\alpha',\beta')$ are independent. Finally, in \cite{HNY:08} the authors prove that the finite-dimensional distributions of a complex Gaussian total disorder process appear as the limiting distribution of the multi-dimensional extension of Selberg's Central Limit Theorem for the logarithm of the Riemann Zeta function (see \cite{Se:46,Se:92}).

\end{itemize}

\end{Rem}

\subsubsection{Applications to functional convergence}

For $\bb{t}=(t_1,t_2)\in [0,1]^2$, we set $\D_{\bb{t}}:=[0,t_1]\times [0,t_2]$ and write as before $\mathscr{L}_E[2](\D_{\bb{t}})$ for the projection of $\mathscr{L}_E(\D_{\bb{t}})$ on the second Wiener chaos. The next statement is a direct consequence of Theorem \ref{CLTd}. 
\begin{Thm}\label{Thm:C2} 
For every $\bb{t}\in [0,1]^2$, we set 
\begin{eqnarray*}
Y_E(\bb{t}) :=   4\pi E^{1/4} \, \mathscr{L}_E[2](\D_{\bb{t} }).
\end{eqnarray*}
For every integer $d\geq1$ and every collection of $\bb{t}_1,\ldots, \bb{t}_d \in [0,1]^2$, we have that, as $E\to \infty$,
\begin{eqnarray*}
\(Y_E(\bb{t}_1),\ldots, Y_E(\bb{t}_d)\) 
\Law  \mathcal{N}_d(0, \Sigma)
\end{eqnarray*}
where $\mathcal{N}_d(0, \Sigma)$ denotes the centred $d$-dimensional Gaussian distribution whose covariance matrix $\Sigma$ is given by \eqref{e:sigmalength} with $\CC_i = \partial\mathcal{D}_{\bb{t}_i}$. In particular, as $E\to\infty$, the random functions $\{Y_E(\bb{t}):\bb{t}\in [0,1]^2\}$ converge in the sense of finite-dimensional distributions to a total disorder random field. \end{Thm}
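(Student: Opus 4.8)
The plan is to obtain Theorem \ref{Thm:C2} as an essentially immediate corollary of Theorem \ref{CLTd}: once each coordinate $Y_E(\bb{t})$ has been identified with one of the variables $\wt{\phi}_E(\CC)$ of the total disorder field indexed by polygonal chains, the finite-dimensional convergence and the form of the limiting covariance follow directly, and the total disorder property of the limit is read off from the remark on hyperuniform scalings. I would not expect any substantial new estimate to be required; the only genuine tasks are the bookkeeping of orientations in the signed length $\lambda$ and the treatment of the degenerate boundary points.

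Concretely, I would fix an integer $d\geq 1$ and points $\bb{t}_1,\dots,\bb{t}_d\in[0,1]^2$, writing $\bb{t}_i=(t_1^{(i)},t_2^{(i)})$, and set $\CC_i:=\partial\D_{\bb{t}_i}$, the \emph{clockwise} oriented boundary of the rectangle $\D_{\bb{t}_i}=[0,t_1^{(i)}]\times[0,t_2^{(i)}]$. Whenever $t_1^{(i)},t_2^{(i)}>0$, the region $\D_{\bb{t}_i}$ has non-empty interior and $\CC_i$ is a genuine simple closed polygonal chain made of four oriented segments, so that $\CC_i\in\mathscr{C}$. By Remark \ref{r:chains}-(a), the clockwise convention gives $\phi_E(\CC_i)=\mathscr{L}_E[2](\D_{\bb{t}_i})$, whence, recalling the normalization \eqref{normal} and the definition of $Y_E$,
\begin{equation*}
Y_E(\bb{t}_i)=4\pi E^{1/4}\,\mathscr{L}_E[2](\D_{\bb{t}_i})=4\pi E^{1/4}\,\phi_E(\CC_i)=\wt{\phi}_E(\CC_i).
\end{equation*}
Since the same clockwise convention is used for every $i$, the signs are mutually consistent, and applying Theorem \ref{CLTd} to $\CC_1,\dots,\CC_d$ yields at once $(Y_E(\bb{t}_1),\dots,Y_E(\bb{t}_d))\Law\mathcal{N}_d(0,\Sigma)$ with $\Sigma(i,j)=\lambda(\CC_i,\CC_j)=\lambda(\partial\D_{\bb{t}_i},\partial\D_{\bb{t}_j})$, which is exactly the asserted covariance. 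An elementary computation using \eqref{signed2} and the shared corner at the origin gives $\lambda(\partial\D_{\bb{t}_i},\partial\D_{\bb{t}_j})=(t_1^{(i)}\wedge t_1^{(j)})+(t_2^{(i)}\wedge t_2^{(j)})$ away from the coincidence locus of parallel edges (the overlaps occur along the two coordinate axes, where the outward normals agree, so both contributions carry a $+$ sign).

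I would then dispose of the degenerate points, namely those $\bb{t}_i$ with $t_1^{(i)}=0$ or $t_2^{(i)}=0$: in this case $\D_{\bb{t}_i}$ degenerates to a segment (or a point) of vanishing area, so its expected nodal length is zero; being a nonnegative random variable of zero mean, $\mathscr{L}_E(\D_{\bb{t}_i})=0$ almost surely, and hence all its chaotic projections vanish, giving $Y_E(\bb{t}_i)\equiv 0$, in agreement with $\lambda(\partial\D_{\bb{t}_i},\cdot)=0$. Reordering so that the degenerate points come last, the non-degenerate block converges by the previous paragraph, and appending the almost surely constant (zero) coordinates does not affect convergence in distribution of the full vector, which therefore still converges to $\mathcal{N}_d(0,\Sigma)$. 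Finally, the total disorder conclusion follows because the limiting centered Gaussian field has covariance $\lambda(\partial\D_{\bb{t}},\partial\D_{\bb{s}})$ over boundaries of rectangles; the remark on hyperuniform scalings and total disorder, point (c), asserts precisely that such a field is a total disorder process. The only step requiring real care is thus the orientation/sign bookkeeping entering $\lambda$ together with the degenerate cases, the analytic substance being entirely absorbed into Theorem \ref{CLTd}.
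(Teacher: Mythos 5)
Your proposal is correct and follows exactly the route the paper intends: the paper gives no separate proof of Theorem \ref{Thm:C2}, presenting it as a direct consequence of Theorem \ref{CLTd} via the identification $Y_E(\bb{t})=\wt{\phi}_E(\partial\D_{\bb{t}})$ from Remark \ref{r:chains}-(a) and \eqref{normal}, which is precisely your argument. Your additional bookkeeping (the explicit value $(t_1^{(i)}\wedge t_1^{(j)})+(t_2^{(i)}\wedge t_2^{(j)})$ of the signed length and the treatment of degenerate $\bb{t}_i$ on the coordinate axes) is sound and, if anything, slightly more careful than the paper.
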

%

 
 \medskip
Combining Theorem \ref{Thm:C2} with a suitable tightness criterion  by Davydov and Zitikis 
\cite{DZ:08} for proving weak convergence of stochastic processes on $[0,1]^d$ (see Proposition \ref{PropTight2}), and with some moment estimates for suprema of stationary Gaussian fields (see Proposition \ref{MomChapter}), one deduces the next characterization of the high-energy behaviour of $X_E[2]$, as defined in \eqref{Eq:XE24R}. This settles Point \textbf{(II)} of {\bf Strategy S}, as outlined in Section \ref{ss:mainquestion}.
\begin{Cor}\label{Cor:SecondFCLT}
As $E \to \infty$, the field $\{X_E[2](\bb{t}):\bb{t}\in [0,1]^2\}$ weakly converges to zero in  
$\mathbf{D}_2$. 
\end{Cor}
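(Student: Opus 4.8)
The plan is to follow \textbf{Strategy S}, Point \textbf{(II)}, by combining the finite-dimensional convergence already recorded in Theorem \ref{Thm:C2} with a tightness argument in the Skorohod space $\mathbf{D}_2$. First I would settle the finite-dimensional distributions. Since $X_E[2](\bb{t}) = \sqrt{512\pi/\log E}\,\mathscr{L}_E[2](\D_{\bb{t}})$, and the normalized field $Y_E(\bb{t}) = 4\pi E^{1/4}\mathscr{L}_E[2](\D_{\bb{t}})$ converges in the sense of finite-dimensional distributions to a finite total-disorder Gaussian field by Theorem \ref{Thm:C2}, one has $X_E[2](\bb{t}) = \kappa(E)\,Y_E(\bb{t})$ with $\kappa(E) := (4\pi E^{1/4})^{-1}\sqrt{512\pi/\log E}\to 0$. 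By Slutsky, the finite-dimensional distributions of $X_E[2]$ converge to those of the zero process; equivalently, $\V{X_E[2](\bb{t})}\to 0$ for each $\bb{t}$ by \eqref{Eq:AsyVar}.

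Next comes the crux: tightness of $\{X_E[2]:E>0\}$ in $\mathbf{D}_2$, which I would obtain from the Davydov--Zitikis criterion of Proposition \ref{PropTight2}. The essential input is a uniform moment bound on increments. Because $\mathscr{L}_E$ is a (random) measure and the projection onto the second chaos is linear, the difference $X_E[2](\bb{t}) - X_E[2](\bb{s})$ equals $\sqrt{512\pi/\log E}$ times $\phi_E$ evaluated on the signed chain $\partial\D_{\bb{t}} \ominus \partial\D_{\bb{s}}$ (the planar increment over a rectangle $R$ being exactly $\sqrt{512\pi/\log E}\,\phi_E(\partial R)$). Theorem \ref{AsyVar} then yields $\V{X_E[2](\bb{t}) - X_E[2](\bb{s})} \le C(\sqrt E\,\log E)^{-1}\|\bb{t}-\bb{s}\|$, uniformly in $E$ and in $\bb{s},\bb{t}$, the length of the relevant chain being controlled by $\|\bb{t}-\bb{s}\|$. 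Since $X_E[2]$ lives in the second Wiener chaos, hypercontractivity upgrades this $L^2$-estimate to $\E{|X_E[2](\bb{t})-X_E[2](\bb{s})|^{2k}}\le C_k (\sqrt E\,\log E)^{-k}\|\bb{t}-\bb{s}\|^k$ for every integer $k\geq 1$; choosing $k>2$ produces an increment exponent on $\|\bb{t}-\bb{s}\|$ strictly larger than the dimension $2$, which is exactly what a Kolmogorov/Davydov--Zitikis-type criterion requires.

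The main obstacle is that the natural \emph{rectangular}-increment variance scales like the perimeter rather than the area of $R$ (compare \eqref{Eq:AsyVar}), so the classical Bickel--Wichura criterion is not directly available; this is precisely why the pointwise-increment bound above, fed into the Davydov--Zitikis criterion of Proposition \ref{PropTight2}, is the appropriate tool. Verifying the remaining hypotheses of that criterion uniformly in $E$ --- in particular controlling the behaviour of $X_E[2]$ on the lower-dimensional faces of $[0,1]^2$ and near the origin --- is where the moment estimates for suprema of stationary Gaussian fields (Proposition \ref{MomChapter}) enter, via the boundary representation \eqref{Second} of $\mathscr{L}_E[2]$ in terms of $B_E$ and $\nabla B_E$. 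Here one must take care that the vanishing normalization $(\sqrt E\,\log E)^{-1/2}$ dominates the mild (logarithmic in $E$) growth of the suprema of these fields, whose correlation length is of order $E^{-1/2}$, so that the raw supremum estimate is genuinely captured by the oscillation of the integral and not by the pointwise size of the integrand.

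With tightness and the finite-dimensional convergence to zero in hand, weak convergence of $X_E[2]$ to the zero process in $\mathbf{D}_2$ follows. I would finally remark that, since the limit is the constant process $0\in\mathbf{C}_2$, this conclusion is in fact equivalent to $\sup_{\bb{t}\in[0,1]^2}|X_E[2](\bb{t})|\to 0$ in probability, which clarifies the nature of the statement and explains why the supremum moment estimates of Proposition \ref{MomChapter} are the decisive ingredient beyond the covariance asymptotics of Theorem \ref{AsyVar}.
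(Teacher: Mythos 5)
There is a genuine gap in your tightness argument, located at the key increment bound. You claim that $X_E[2](\bb{t})-X_E[2](\bb{s})$ equals $\sqrt{512\pi/\log E}$ times $\phi_E$ of a chain whose length is controlled by $\norm{\bb{t}-\bb{s}}$, so that Theorem \ref{AsyVar} gives $\V{X_E[2](\bb{t})-X_E[2](\bb{s})}\le C(\sqrt{E}\log E)^{-1}\norm{\bb{t}-\bb{s}}$ uniformly. This is false: the relevant chain comes from the symmetric difference $\D_{\bb{t}}\,\triangle\,\D_{\bb{s}}$, whose boundary has length of order one, not of order $\norm{\bb{t}-\bb{s}}$. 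Concretely, for $\bb{t}=(t_1,t_2)$ and $\bb{s}=(s_1,t_2)$ with $s_1$ close to $t_1$, the two right-hand edges $\lbrace t_1\rbrace\times[0,t_2]$ and $\lbrace s_1\rbrace\times[0,t_2]$ lie on distinct parallel lines, so by Proposition \ref{PropCovSeg} their covariance is $O(E^{-1})$ while each contributes a variance $\approx t_2/(16\pi^2\sqrt{E})$; hence $\V{\phi_E(\partial\D_{\bb{t}})-\phi_E(\partial\D_{\bb{s}})}\approx (2t_2+2|t_1-s_1|)/(16\pi^2\sqrt{E})$, which does not vanish as $\norm{\bb{t}-\bb{s}}\to0$. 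This is precisely the total-disorder phenomenon the paper emphasizes: nearby rectangles produce asymptotically independent, not close, second-chaos projections. Consequently the bound $\E{|X_E[2](\bb{t})-X_E[2](\bb{s})|^{2k}}\le C_k(\sqrt{E}\log E)^{-k}\norm{\bb{t}-\bb{s}}^{k}$ you derive by hypercontractivity does not hold for all pairs, and no exponent larger than $2$ on $\norm{\bb{t}-\bb{s}}$ valid uniformly in $\bb{t},\bb{s}$ is available. Moreover, the error term in Theorem \ref{AsyVar} is only $o(E^{-1/2})$ for each fixed pair of chains and is not asserted to be uniform over chains, so it could not be invoked uniformly over all rectangles in any case.

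The paper's proof circumvents exactly this obstruction by exploiting the two-regime structure of the Davydov--Zitikis criterion. For separated points, $\norm{\bb{t}-\bb{s}}\ge a_E:=(\sqrt{E}\log E)^{-1}$, no cancellation is needed: the crude bound $\E{(X_E[2](\bb{t})-X_E[2](\bb{s}))^2}\le 2\V{X_E[2](\bb{t})}+2\V{X_E[2](\bb{s})}\le 2K(\sqrt{E}\log E)^{-1}\le c\norm{\bb{t}-\bb{s}}$ already holds, and hypercontractivity upgrades it to exponent $p/2>2$. For $\norm{\bb{t}-\bb{s}}\le a_E$ the argument abandons the covariance viewpoint entirely and controls the modulus of continuity pathwise, writing $\mathscr{L}_E[2](\D_{\bb{t}})$ as the area integral $\tfrac{\pi}{8}\sqrt{2E}\,(-2\int_{\D_{\bb{t}}}B_E^2+\int_{\D_{\bb{t}}}\norm{\wt{\nabla} B_E}^2)$, bounding the increment by the suprema of $|B_E|^2$ and $\norm{\wt{\nabla} B_E}^2$ times ${\rm area}(\D_{\bb{t}}\setminus\D_{\bb{s}})\le C\norm{\bb{t}-\bb{s}}\le Ca_E$ (Lemma \ref{Contf}), and invoking Lemma \ref{SupGaussian} to get $\E{\omega(E)}\le C(\log E)^{-1/2}\to0$. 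This second step is where Proposition \ref{MomChapter} actually enters --- not, as you suggest, to control behaviour on the lower-dimensional faces of the square or near the origin. Your proposal identifies the correct criterion and the correct ingredients, but assembles them around an increment estimate that fails, so the argument as written does not close.
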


We will now show how Corollary \ref{Cor:SecondFCLT}, together with the content of Theorem \ref{Thm:DomX4}, can be used in order to partially address Point {\bf (III)} of {\bf Strategy S}.

\subsection{Approximate tightness}\label{ss:approximate}

\subsubsection{Discretized nodal length process}\label{ss:discretized}
Let us first introduce some notation.  
For $K\geq1$, we indicate by $\Pi_{K}$ the partition of $[0,1]^2$ formed by the collection of squares of side length $2^{-K}$. 
For 
 every vector $i =(i_1,i_2)\in \{0,\ldots, 2^{K}\}^2$, we define the \textit{partition points} 
$\bb{p}_{i}(K,K):=(p_{i_1}(K),p_{i_2}(K))\in [0,1]^2$ by
\begin{eqnarray*}
p_{i_1}(K):=\frac{i_1}{2^K},     \quad 
p_{i_2}(K):=\frac{i_2}{2^{K}}, \quad i_1,i_2=0,1,\dots,2^K .
\end{eqnarray*}
For   $\bb{t}=(t_1,t_2)\in [0,1]^2$, we write  $i_{K,K}(\bb{t})=\(i_{1,K}(t_1),i_{2,K}(t_2)\)$ for the vector verifying 
\begin{eqnarray*}
p_{i_{1,K}(t_1)}\leq t_1 < p_{i_{1,K}(t_1)+1},  \quad 
p_{i_{2,K}(t_2)}\leq t_2 < p_{i_{2,K}(t_2)+1},
\end{eqnarray*}
that is, the vector $i_{K,K}(\bb{t})$ is such that $\bb{p}_{i_{K,K}(\bb{t})}(K,K)$ is the closest partition point to $\bb{t}$ on the left (by convention, if ${\bf t} = (t_1,1)$, with $t_1<1$, one sets $i_{K,K}(\bb{t})=\(i_{1,K}(t_1),2^K\)$, and analogous conventions are adopted when ${\bf t} = (1,t_2)$, $t_2<1$ and ${\bf t} = (1,1)$).

\medskip
We now introduce a notion of {\bf discretized nodal length}. 
\begin{Def}({Discretized nodal length field})
Let $K\geq 1$ be  an integer and   $\Pi_K$ a partition of $[0,1]^2$ as described above. For $\bb{t} \in [0,1]^2$, we define the {\bf discretized nodal length} at {\bf t} as
\begin{eqnarray*}
\mathscr{L}_E^{K}([0,t_1]\times [0,t_2]):= \mathscr{L}_E\([0, p_{i_{1,K}(t_1)}(K)] \times 
[0,p_{i_{2,K}(t_2)}(K)]\)
\end{eqnarray*}
and write $X_E^K$ for its normalized version:
\begin{eqnarray*}
X_E^K(\bb{t}) = \sqrt{\frac{512\pi}{\log E}} \( \mathscr{L}_E^{K}([0,t_1]\times [0,t_2]) - \E{\mathscr{L}_E^{K}([0,t_1]\times [0,t_2])} \).
\end{eqnarray*}
As usual, we write $X_E^K[q]$ for the projection of $X_E^K$ on the $q$th Wiener chaos and set $R_E^K= \sum_{q\geq 3} X_E^K[2q]$. 
\end{Def}
In short, the quantity $X_E^K(\bb{t})$ represents the normalized nodal length contained in the rectangle formed by the partition coordinates that are closest to $\bb{t}$, thus yielding a discrete approximation of $X_E(\bb{t})$. Moreover, $\mathscr{L}_E^K$ is $\Prob$-almost surely an element of $\mathbf{D}_2$.
The following result shows that 
there exists a suitable partition $\Pi_{K}$ of $[0,1]^2$ associated with a sequence $K=K(E)$ such that
the discretized residue process $R_E^K$ converges to zero uniformly on the unit square, thus showing a \textit{discretized} version of Point \textbf{(III)} of {\bf Strategy S}.
The proof of Theorem \ref{Thm:DiscretizedR} relies on a \textbf{planar chaining argument} inspired by \cite{DT:89, MW:11b}.
\begin{Thm}\label{Thm:DiscretizedR}
Let $\{K(E):E>0\}$ be a numerical sequence such that $K(E)\to \infty$ and $K(E) = o((\log E)^{1/10})$ as $E\to \infty$.
Then, for every $\eps>0$, 
\begin{eqnarray*}
\Prob\{ \sup_{\bb{t}\in [0,1]^2}
\left| R_E^{K(E)}(\bb{t})\right| >\eps
\} \to 0.
\end{eqnarray*}
\end{Thm}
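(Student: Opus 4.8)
The plan is to exploit two structural features of the discretized residual. First, $X_E^K$ (and hence $R_E^K$) is constant on each cell of the partition $\Pi_K$, so that $\sup_{\bb{t}\in[0,1]^2}|R_E^K(\bb{t})|$ is genuinely a maximum over the $(2^K+1)^2$ grid values $R_E([0,p_{i_1}(K)]\times[0,p_{i_2}(K)])$. Second, nodal length is finitely additive over rectangles, a property inherited by each Wiener chaos projection and therefore by $R_E$. Using additivity I would set up the planar chaining in the spirit of \cite{DT:89,MW:11b}: for each grid point $\bb{p}_i$ and each scale $0\le j\le K$, let $\pi_j(\bb{p}_i)=[0,a_j]\times[0,b_j]$ be the largest dyadic rectangle of mesh $2^{-j}$ contained in $[0,p_{i_1}]\times[0,p_{i_2}]$, and write the telescoping identity
\begin{equation*}
R_E(\pi_K(\bb{p}_i))=R_E(\pi_0(\bb{p}_i))+\sum_{j=1}^{K}\big(R_E(\pi_j(\bb{p}_i))-R_E(\pi_{j-1}(\bb{p}_i))\big).
\end{equation*}
Each increment is, by additivity, the residual evaluated on the L-shaped region $\pi_j(\bb{p}_i)\setminus\pi_{j-1}(\bb{p}_i)$, a union of boundedly many dyadic rectangles of side $2^{-j}$; crucially, at scale $j$ there are only $O(4^{j})$ distinct such increments.

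The key analytic input I would establish is a quantitative moment bound for $R_E$ over a dyadic rectangle $\mathcal R$ of side $2^{-j}$. At the level of the $L^2$ norm, combining the variance asymptotics of Theorem \ref{thm_L} with Theorem \ref{AsyVar} (showing the second chaos vanishes like $1/\sqrt E$) and Theorem \ref{Thm:DomX4} (showing the fourth chaos carries the entire $\log E$ order) should yield $\V{R_E(\mathcal R)}\le C\,\mathrm{area}(\mathcal R)/\log E$, uniformly over the relevant scales since $2^{-K}\gg E^{-1/2}$ throughout the admissible range of $K$. To pass from variances to the $2p$-th moments needed for a union bound, hypercontractivity cannot be applied directly, because $R_E=\sum_{q\ge 3}X_E[2q]$ is an \emph{infinite} chaos sum; instead I would truncate at a chaos order $Q=Q(E)$, controlling the low part $\sum_{3\le q\le Q}X_E[2q](\mathcal R)$ by hypercontractivity — at the cost of a factor $(2p-1)^{Q}$ — and the discarded high-chaos part by an $L^2$ tail estimate for the chaos decomposition of the nodal length.

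With such moment bounds in hand, the chaining is closed by a union bound across scales: writing $\Delta_j$ for a generic scale-$j$ increment,
\begin{equation*}
\E{\sup_{\bb{t}}|R_E^K(\bb{t})|}\le\sum_{j=0}^{K}\big(C\,4^{j}\big)^{1/(2p)}\max_{i}\|\Delta_j(\bb{p}_i)\|_{2p},
\end{equation*}
after which one sums the (geometrically decaying) series in $j$ and applies Markov's inequality. The final step is an optimization: the number $4^{K}$ of grid points, the moment order $p$, the truncation level $Q$, and the variance gain $1/\log E$ must be balanced so that the right-hand side tends to $0$. It is precisely this balance — forcing $p$ to grow with $K$ in order to absorb the factor $(4^K)^{1/(2p)}$, which in turn inflates the hypercontractive constant $(2p-1)^{Q}$ — that prevents $K$ from growing faster than a small power of $\log E$; carrying the bookkeeping through produces an admissible range of the form $K(E)=o((\log E)^{\alpha})$, and with the sharpest available moment estimates one may take $\alpha=1/10$.

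The step I expect to be the main obstacle is the moment control of the infinite-order residual $R_E$, uniformly over all dyadic rectangles down to side $2^{-K}$. Since hypercontractivity fails for the full series, the truncation must simultaneously keep $(2p-1)^{Q}$ under control and guarantee that the discarded high-chaos tail is uniformly negligible \emph{in the supremum norm} — not merely in $L^2$ at a fixed rectangle. Making these two demands coexist, while retaining the crucial $1/\log E$ smallness of the variance at \emph{every} scale and position, is the delicate part, and the admissible growth rate of $K$ is essentially dictated by how sharply this trade-off can be resolved.
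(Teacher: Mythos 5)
Your overall architecture (dyadic chaining, discretization turning the supremum into a maximum over grid values, a union bound across scales) matches the paper's, but the two arguments diverge at the decisive point, and your version has a genuine gap there. The paper never uses moments of order higher than two. It telescopes $R_E^K$ with the \emph{two-parameter rectangular} difference operators $\Delta_{k,k'}$, whose value at a grid point is (up to the normalization $\sqrt{512\pi/\log E}$) the residual chaos sum $\sum_{q\ge 3}\mathscr{L}_E[2q]$ evaluated on a single dyadic rectangle of side lengths $2^{-(k+1)}\times 2^{-(k'+1)}$. Lemma \ref{Cheby} --- resting on the domain-explicit bound $\V{\sum_{q\geq3}\mathscr{L}_E[2q](\D)}\leq C\,\mathrm{area}(\D)$ extracted from the proofs of \cite[Lemmas 7.6, 7.8, 7.9]{NPR:19} --- gives each such increment variance $\frac{C}{\log E}\,2^{-(k+1)}2^{-(k'+1)}$, which \emph{exactly} cancels the $2^{k}\cdot 2^{k'}$ grid points at scale $(k,k')$ in a plain Chebyshev union bound. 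The only loss is the weight $(k+3)^4(k'+3)^4$ from the summable allocation of $\eps$, which accumulates to $K^{10}$ over the two-dimensional scale index; that is where the exponent $1/10$ comes from. Because everything is done in $L^2$, the infinite chaos order of $R_E$ costs nothing: orthogonality of the chaoses handles it.

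Your chaining instead uses \emph{first-order} increments $R_E(\pi_j)-R_E(\pi_{j-1})$ along L-shaped regions of area $\sim 2^{-j}$, of which there are $\sim 4^{j}$ at scale $j$. With second moments only, this mismatch ($4^j$ events, each of variance $\sim 2^{-j}/\log E$) forces $2^K K^4=o(\log E)$, i.e. $K=O(\log\log E)$, far short of the claimed range. You correctly sense this and escape to $2p$-th moments, but then hit the obstruction you yourself flag: hypercontractivity does not apply to the infinite sum $\sum_{q\ge 3}X_E[2q]$, and your proposed fix --- truncation at chaos order $Q(E)$ plus an ``$L^2$ tail estimate'' for the discarded part --- does not close, because controlling $\sup_{\bb{t}}\bigl|\sum_{q>Q}X_E[2q](\bb{t})\bigr|$ is essentially the original problem again: an $L^2$ bound at a fixed rectangle does not survive the union over $\sim 4^K$ grid points without, once more, high moments. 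So the ``delicate trade-off'' you defer is not bookkeeping; as written the proof does not go through, and the assertion that optimizing it yields $\alpha=1/10$ is unsupported (that exponent arises from a different mechanism entirely). The repair is to replace your first-order chaining increments by the rectangular double differences $\Delta_{k,k'}$, after which the variance per increment scales with the \emph{area} $2^{-(k+k')}$ of the small cell, Chebyshev suffices, and the hypercontractivity issue evaporates.
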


Combining the findings of Theorem \ref{Thm:DiscretizedR},
Corollary \ref{Cor:SecondFCLT} and the weak convergence of $X_E[4]$ proved in \cite{PV:20} (see Theorem \ref{Thm:DomX4}), allows us to deduce the following weak convergence result for the   discretized nodal length process. 
\begin{Cor}\label{Cor:Discretized}
Let $\{K(E):E>0\}$ be a numerical sequence such that $K(E)\to \infty$ and $K(E) = o((\log E)^{1/10})$, as $E\to \infty$. 
Then, the normalized process $X_E^{K(E)}$ converges weakly to a standard Wiener sheet $\bb{W}$ on $[0,1]^2$ in the Skorohod space $\mathbf{D}_2$.
\end{Cor}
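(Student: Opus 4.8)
The plan is to deduce the result from Lemma~\ref{Lem:UVW}, applied to the Wiener--It\^{o} decomposition
$$
X_E^{K(E)} = X_E^{K(E)}[4] + X_E^{K(E)}[2] + R_E^{K(E)},
$$
with the identifications $(U_n,V_n,W_n) = \big(X_E^{K(E)}[4],\, X_E^{K(E)}[2],\, R_E^{K(E)}\big)$ and limiting field $X = \mathbf{W}$. Since the Wiener sheet $\mathbf{W}$ has almost surely continuous trajectories, the hypothesis $\Prob(\mathbf{W}\in\mathbf{C}_2)=1$ of Lemma~\ref{Lem:UVW} is satisfied, and each of the three summands is $\Prob$-almost surely an element of $\mathbf{D}_2$ (being a step-function approximation of a process in $\mathbf{D}_2$). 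Condition~(c) of the lemma is then precisely the statement of Theorem~\ref{Thm:DiscretizedR}, valid because $K(E)\to\infty$ and $K(E) = o((\log E)^{1/10})$. It thus remains to verify conditions~(a) and~(b), namely that $X_E^{K(E)}[4]$ converges weakly to $\mathbf{W}$ and that $X_E^{K(E)}[2]$ converges weakly to zero.

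The key observation is that discretization commutes with chaotic projection: directly from the definition of $\mathscr{L}_E^K$ one gets, for every fixed $\bb{t}$ and every $q$,
$$
X_E^{K}[q](\bb{t}) = X_E[q]\big(\bb{p}_{i_{K,K}(\bb{t})}(K,K)\big),
$$
so that $X_E^{K}[q]$ is exactly the grid-evaluation (step-function approximation) of $X_E[q]$ on the partition $\Pi_K$. Conditions~(a) and~(b) therefore reduce to the following general principle: if $Z_E$ converges weakly in $\mathbf{D}_2$ to a field $Z$ with $\Prob(Z\in\mathbf{C}_2)=1$, and if $K(E)\to\infty$, then the fields obtained by grid-evaluation on $\Pi_{K(E)}$ also converge weakly to $Z$. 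Applying this with $Z_E = X_E[4]$ and $Z=\mathbf{W}$ (using Theorem~\ref{Thm:DomX4}) yields~(a), and with $Z_E = X_E[2]$ and $Z\equiv 0$ (using Corollary~\ref{Cor:SecondFCLT}) yields~(b).

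To establish this principle I would invoke the Skorohod representation theorem to realize, on some auxiliary probability space, the convergence $Z_E\to Z$ almost surely in the Skorohod topology of \cite{Ne:71}; since the limit $Z$ is almost surely continuous, this upgrades to uniform convergence $\sup_{\bb{t}}|Z_E(\bb{t})-Z(\bb{t})|\to 0$ almost surely. Writing $\Pi_K$ for the grid-evaluation operator, one then splits
$$
\|\Pi_{K(E)} Z_E - Z\|_\infty \leq \|\Pi_{K(E)}(Z_E - Z)\|_\infty + \|\Pi_{K(E)} Z - Z\|_\infty \leq \|Z_E - Z\|_\infty + \|\Pi_{K(E)} Z - Z\|_\infty,
$$
where the first bound uses that grid-evaluation takes a supremum over a subset of points, and the second term tends to zero because $Z$ is uniformly continuous on the compact square $[0,1]^2$ while the mesh $2^{-K(E)}$ vanishes. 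Hence $\Pi_{K(E)} Z_E\to Z$ uniformly on the representation space, a fortiori in the Skorohod topology, which gives the claimed weak convergence. The main obstacle — and the only point requiring genuine care — is exactly this discretization step: one must justify that Skorohod convergence of $Z_E$ to a \emph{continuous} limit forces the asymptotic uniform control needed to absorb the grid-evaluation error, which is where the continuity of $\mathbf{W}$ (and of the zero field) is essential and where one relies on the specific structure of the multidimensional Skorohod topology. With~(a),~(b) and~(c) in hand, Lemma~\ref{Lem:UVW} delivers the weak convergence $X_E^{K(E)}\to\mathbf{W}$ in $\mathbf{D}_2$.
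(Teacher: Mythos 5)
Your proposal is correct and follows essentially the same route as the paper: both reduce to Lemma \ref{Lem:UVW} via the chaotic decomposition, invoke Theorem \ref{Thm:DomX4}, Corollary \ref{Cor:SecondFCLT} and Theorem \ref{Thm:DiscretizedR} for the three pieces, and control the discretization error by combining the Skorohod representation theorem with the uniform continuity of the (almost surely continuous) limit on $[0,1]^2$. The only difference is organizational -- you phrase the grid-evaluation step as a standalone principle applied to $X_E[4]$ and $X_E[2]$, whereas the paper adds and subtracts the undiscretized projections and bounds the resulting difference terms uniformly in probability -- but the underlying argument is identical.
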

Corollary \ref{Cor:Discretized}
gives access to a number of new limit theorems dealing with specific functionals of the discretized nodal length process. Of particular interest is, for instance, the asymptotic behaviour of the
maximal discrepancy between the discretized nodal length and its expectation, given by the supremum of $X_E^K$. Such statistics 
provide global indications on how the nodal length process deviates from its mean and are intimately related to {\bf overcrowding estimates} and concentration inequalities. We refer the reader e.g. to \cite{Pr:20} for 
the study of such events in the framework of zero counts and nodal length associated with stationary Gaussian processes.

\medskip

\begin{Cor}\label{Cor:SupDiscretized}
Let $\{K(E):E>0\}$ be a numerical sequence such that $K(E)\to \infty$ and $K(E) = o((\log E)^{1/10})$ as $E\to \infty$. Then, as $E\to \infty$, we have that 
\begin{eqnarray*}
\sup_{\bb{t}\in [0,1]^2}
\left| X_E^{K(E)}(\bb{t})\right| \Law \sup_{\bb{t}\in [0,1]^2} |\bb{W}(\bb{t})|.
\end{eqnarray*}
\end{Cor}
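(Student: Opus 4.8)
The plan is to deduce the statement directly from the functional convergence of Corollary \ref{Cor:Discretized} by means of the continuous mapping theorem. Introducing the functional $\Phi : \mathbf{D}_2 \to \R$ defined by $\Phi(f) := \sup_{\bb{t}\in [0,1]^2} |f(\bb{t})|$, the assertion is exactly that $\Phi(X_E^{K(E)}) \Law \Phi(\bb{W})$. Since Corollary \ref{Cor:Discretized} already gives $X_E^{K(E)} \Law \bb{W}$ in $\mathbf{D}_2$, it therefore suffices to verify that $\Phi$ is continuous with respect to the Skorohod topology of Neuhaus \cite{Ne:71} — and, as we shall see, it is in fact globally Lipschitz, so that no appeal to the continuity set of the limit is even required (although we note for safety that $\Prob\{\bb{W}\in\mathbf{C}_2\}=1$).

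First I would record the elementary but decisive fact that $\Phi$ is invariant under the time changes entering the Neuhaus distance $d$. The latter is, up to harmless reparametrization of the argument, of the form $d(f,g) = \inf_{\lambda} \max\{\norm{\lambda - e}_\infty, \norm{f - g\circ\lambda}_\infty\}$, where $e$ is the identity and the infimum runs over the coordinatewise increasing homeomorphisms $\lambda = (\lambda_1,\lambda_2)$ of $[0,1]^2$. Since every such $\lambda$ is a bijection of $[0,1]^2$, one has $\sup_{\bb{t}}|g(\lambda(\bb{t}))| = \sup_{\bb{s}}|g(\bb{s})| = \Phi(g)$, whence, for every admissible $\lambda$,
$$
|\Phi(f) - \Phi(g)| = \Bigl|\sup_{\bb{t}}|f(\bb{t})| - \sup_{\bb{t}}|g(\lambda(\bb{t}))|\Bigr| \le \sup_{\bb{t}}\bigl|f(\bb{t}) - g(\lambda(\bb{t}))\bigr| = \norm{f - g\circ\lambda}_\infty .
$$
Taking the infimum over $\lambda$ then yields $|\Phi(f) - \Phi(g)| \le d(f,g)$, so that $\Phi$ is $1$-Lipschitz on the separable metric space $(\mathbf{D}_2, d)$ and, in particular, continuous.

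With the continuity of $\Phi$ secured, I would conclude by invoking the continuous mapping theorem for weak convergence on $(\mathbf{D}_2, d)$: combined with Corollary \ref{Cor:Discretized} it gives at once $\Phi(X_E^{K(E)}) \Law \Phi(\bb{W})$, which is precisely the claimed convergence of the suprema.

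The only point that genuinely requires care — and the step I expect to be the main obstacle — is the continuity of $\Phi$ in the Skorohod rather than the uniform topology. While $\Phi$ is trivially $1$-Lipschitz for the supremum norm, the Neuhaus distance additionally permits a deformation of the argument by a time change, and one must check that this deformation leaves $\Phi$ unchanged; this is exactly what the invariance $\Phi(g\circ\lambda) = \Phi(g)$ supplies, and it hinges on the $\lambda$'s being honest bijections of $[0,1]^2$. Should one prefer to bypass this computation, an equivalent route is to combine the standard fact that Skorohod convergence to a limit lying in $\mathbf{C}_2$ upgrades to uniform convergence with the $\Prob$-almost sure continuity of $\bb{W}$, and then apply the mapping theorem in the form requiring $\Phi$ to be continuous only off a set of limit measure zero.
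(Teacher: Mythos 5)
Your proposal is correct and is exactly the argument the paper intends: Corollary \ref{Cor:SupDiscretized} is stated as a direct consequence of Corollary \ref{Cor:Discretized}, with the continuous mapping theorem applied to the supremum functional left implicit. Your verification that $\Phi(f)=\sup_{\bb{t}}|f(\bb{t})|$ is $1$-Lipschitz for the Neuhaus distance (via the invariance $\Phi(g\circ\lambda)=\Phi(g)$ under the bijective time changes) supplies precisely the missing detail, so nothing further is needed.
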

To the best of our expertise, the probability distribution of the supremum of the Wiener sheet is not known. In \cite{PP:73}, the authors provide a number of explicit expressions for the probability distribution function of the supremum of  Wiener sheets restricted to the boundary of planar domains. For instance, the  following statement is a direct consequence of Corollary \ref{Cor:SupDiscretized} and \cite[Theorem 3]{PP:73}, yielding a closed formula for the asymptotic distribution function of the supremum of $X_E^{K}$ on the boundary of the unit square. We refer the reader to \cite{PP:73} for more examples in this direction. 
\begin{Cor}\label{Cor:Explicit}
Let $\{K(E):E>0\}$ be a numerical sequence such that $K(E)\to \infty$ and $K(E) = o((\log E)^{1/10})$ as $E\to \infty$. 
Then, for every $z\in \R$, we have that, as $E\to \infty$,
\begin{eqnarray*}
\Prob\{\sup_{\bb{t} \in \partial [0,1]^2}
\left| X_E^{K(E)}(\bb{t})\right| \leq z\}
\to 
 \Prob\{\sup_{\bb{t} \in \partial [0,1]^2}
|\bb{W}(\bb{t})| \leq z\}
= 1-3\Phi(-z)+ e^{4z^2}\Phi(-3z),
\end{eqnarray*}
where $\Phi(z):= \Prob\{N\leq z\}$, with $N$ being a standard Gaussian random variable.
\end{Cor}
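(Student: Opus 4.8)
The plan is to obtain Corollary~\ref{Cor:Explicit} from the functional convergence of Corollary~\ref{Cor:Discretized} by applying the continuous mapping theorem to the ``supremum over the boundary'' functional, and then to read off the limiting law from \cite[Theorem 3]{PP:73}. Concretely, I would introduce the functional $\Psi:\mathbf{D}_2\to \R$ given by $\Psi(f):=\sup_{\bb{t}\in \partial[0,1]^2}|f(\bb{t})|$, and organise the argument in three stages: first, show that $\Psi$ is continuous at every point of $\mathbf{C}_2$; second, deduce $\Psi(X_E^{K(E)})\Law \Psi(\bb{W})$; third, invoke \cite[Theorem 3]{PP:73} to identify the distribution function of $\Psi(\bb{W})$. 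This is exactly the scheme behind Corollary~\ref{Cor:SupDiscretized}, now run with the boundary functional in place of the full-square one.

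For the continuity stage, I would use the multivariate analogue of the classical fact that convergence in the Neuhaus--Skorohod topology towards a \emph{continuous} limit is equivalent to uniform convergence on $[0,1]^2$ (the same property already used to establish Corollary~\ref{Cor:SupDiscretized}). Thus, if $f_n\to f$ in $\mathbf{D}_2$ with $f\in\mathbf{C}_2$, then $\sup_{\bb{t}\in[0,1]^2}|f_n(\bb{t})-f(\bb{t})|\to 0$, which forces uniform convergence on the closed subset $\partial[0,1]^2$ and hence $\Psi(f_n)\to \Psi(f)$. Since $\Prob\{\bb{W}\in\mathbf{C}_2\}=1$, the functional $\Psi$ is continuous $\Prob$-almost everywhere along $\bb{W}$; a routine separability remark (elements of $\mathbf{D}_2$ are determined by countably many evaluations) gives the measurability of $\Psi$, so the continuous mapping theorem is applicable.

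For the remaining stages, Corollary~\ref{Cor:Discretized} yields $X_E^{K(E)}\Law \bb{W}$ in $\mathbf{D}_2$, whence $\Psi(X_E^{K(E)})\Law \Psi(\bb{W})$. I would then note that the restriction of $\bb{W}$ to $\partial[0,1]^2$ vanishes identically on the two edges meeting at the origin and reduces to a pair of standard Brownian motions $\{\bb{W}(t_1,1)\}_{t_1}$ and $\{\bb{W}(1,t_2)\}_{t_2}$ on the remaining two edges, the two motions sharing the common value $\bb{W}(1,1)$; this is precisely the configuration covered by \cite[Theorem 3]{PP:73}, which gives $\Prob\{\Psi(\bb{W})\leq z\}=1-3\Phi(-z)+e^{4z^2}\Phi(-3z)$. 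Because this limiting distribution function is continuous in $z$, the convergence in distribution upgrades to convergence of the distribution functions at \emph{every} $z\in\R$, which is the asserted statement.

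The hard part will be the continuity stage: $\Psi$ reads off $f$ only on the lower-dimensional set $\partial[0,1]^2$, and a priori the Skorohod time-deformations permitted by the topology could distort these boundary values. The point that saves the argument is that the limit $\bb{W}$ lies almost surely in $\mathbf{C}_2$, so the relevant Skorohod convergence is genuinely uniform and the boundary values are controlled; for a discontinuous limit $\Psi$ would in general fail to be continuous and the continuous mapping theorem could not be invoked.
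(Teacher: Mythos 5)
Your proposal is correct and follows essentially the same route the paper intends: the paper presents Corollary~\ref{Cor:Explicit} as a direct consequence of the functional convergence $X_E^{K(E)}\Law \bb{W}$ in $\mathbf{D}_2$ (Corollary~\ref{Cor:Discretized}) via the continuous mapping theorem applied to the boundary-supremum functional, together with the explicit law from \cite[Theorem 3]{PP:73} -- exactly the scheme you run, including the key observation that Skorohod convergence to an element of $\mathbf{C}_2$ is uniform convergence, so the lower-dimensional boundary values are controlled.
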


\begin{Rem}
The findings described above are not sufficient to obtain a weak convergence result for the process $X_E$ and thus to fully address Part \textbf{(III)} of {\bf Strategy S}. 
Our main difficulty for directly dealing with the residual term $R_E$ (instead of its discretized version $R_E^K$) appears in the chaining argument used in the proof of Theorem \ref{Thm:DiscretizedR} and is essentially explained by the fact that the expectation of $X_E$ (which is of order $\sqrt{E/\log E}$) grows considerably faster than the normalizing factor $\log E$. Carrying out the planar chaining argument  with $R_E$ typically requires the quantity 
\begin{eqnarray*}
\left|\E{X_E(\bb{t})}
-\E{X_E(\bb{p}_{i_{K,K}(\bb{t})}(K,K))}\right|
\approx \frac{\sqrt{E}}{\sqrt{\log E}} \frac{1}{2^K}
\end{eqnarray*}
to be bounded, thus imposing $K=K(E)$ to be of logarithmic order. Such a requirement is however incompatible with the choice $o((\log E)^{1/10})$, as is needed in the above statements. Such a difficulty is eschewed  when dealing with the discretized versions, since in this case
$\E{X_E^K(\bb{t})}
=\E{X_E^K(\bb{p}_{i_{K,K}(\bb{t})}(K,K))}$
by construction of $X_E^K$, implying that the above difference is zero. 
One possible strategy for  providing a complete answer to \textbf{(III)} would be to prove that for every $\eps>0$
\begin{eqnarray*}
\Prob\{ \sup_{\bb{t}\in [0,1]^2}
\left| R_E^{K(E)}(\bb{t})-R_E(\bb{t})\right| >\eps
\} \to 0,
\end{eqnarray*} 
as $E\to \infty$, where $K(E)$ is as in Theorem \ref{Thm:DiscretizedR}.  However,  our arguments
 allow one to prove
 that such an asymptotic 
relation only holds pointwise in the $L^2(\Prob)$-sense
\begin{eqnarray*}
\E{\( R_E^{K(E)}(\bb{t})-R_E(\bb{t}) \)^2 } \leq c_1 \frac{1}{\log E}
\frac{1}{2^{K(E)}}  
\end{eqnarray*}
where $c_1>0$ is some absolute constant, thus converging to zero in view of our choice of $K(E)$ (see in particular Lemma \ref{Cheby}).
\end{Rem}

\subsubsection{Truncated nodal length process}
We also point out that our results on the second Wiener chaos are sufficient to formulate a  weak convergence result for   {\bf truncated nodal lengths} of increasing degree, defined as follows.
\begin{Def}({Truncated nodal length})
For an integer $N \geq 1$, we define the \textbf{truncated nodal length} of order $N$ by 
\begin{eqnarray*}
\mathscr{L}_E(\D;N) := \sum_{q= 0}^N\mathscr{L}_E[2q](\D).
\end{eqnarray*}
We write $X_E(\bb{t};N)$ for the normalized version of $\mathscr{L}_E([0,t_1]\times [0,t_2];N)$ and $R_E(\bb{t};N):= \sum_{q=3}^N
X_E[2q](\bb{t})$ for its chaotic projections of order $6$ to $N$. 
\end{Def}
The following result 
shows that the process $R_E(\bullet;N)$
converges to zero  for a well-chosen $N=N(E)$, as a consequence of the hypercontractivity property on Wiener chaoses. 
\begin{Prop}\label{Prop:TruncatedR}
 Let $N(E)=  \log_5( \log E)$. Then, as $E\to \infty$, the process 
$$
\{ R_E(\bb{t}; N(E)) : \bb{t}\in [0,1]^2\}
$$ 
converges weakly to zero in $\mathbf{D}_2$. 
\end{Prop}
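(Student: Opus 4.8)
The plan is to obtain the weak convergence to the (deterministic, continuous) zero field by combining two ingredients: degeneracy of all finite-dimensional distributions and tightness in $\mathbf{D}_2$. Since the Skorohod distance to an element of $\mathbf{C}_2$ coincides with the uniform distance, this is exactly the uniform-in-probability convergence of condition (c) in Lemma \ref{Lem:UVW} (read with $X=U_n=V_n=0$). The engine throughout is the hypercontractivity inequality on Wiener chaoses, which upgrades $L^2$ control of chaotic increments to the higher-moment control demanded by the tightness criterion of Davydov and Zitikis (Proposition \ref{PropTight2}); the slow growth $N(E)=\log_5(\log E)$ is precisely what keeps the hypercontractive cost affordable.

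First I would record the chaos structure of the increments. For a rectangle $Q=[s_1,t_1]\times[s_2,t_2]\subseteq[0,1]^2$, additivity of the nodal-length measure and linearity of the chaotic projections give that the rectangular increment of $R_E(\bullet;N)$ over $Q$ equals $\sqrt{512\pi/\log E}\,\sum_{q=3}^{N}\mathscr{L}_E[2q](Q)$, an element of the Wiener chaos of $B_E$ of order at most $2N$. Its squared $L^2(\Prob)$-norm is $\frac{512\pi}{\log E}\sum_{q=3}^{N}\V{\mathscr{L}_E[2q](Q)}$, so the decisive quantitative input is a uniform bound $\sum_{q\geq3}\V{\mathscr{L}_E[2q](Q)}\leq C\,\mathrm{area}(Q)$, free of stray powers of $\log E$; this I would obtain by subtracting the second- and fourth-chaos contributions (controlled through Theorem \ref{AsyVar} and the fourth-chaos analysis of \cite{NPR:19}, the latter already carrying the full leading term $\sim\frac{\log E}{512\pi}\mathrm{area}(Q)$) from the total variance of Theorem \ref{thm_L}. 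Together these yield $\lVert\Delta_Q R_E(\bullet;N)\rVert_2\leq C(\log E)^{-1/2}\,\mathrm{area}(Q)^{1/2}$.

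Next I would invoke hypercontractivity: a random variable in the chaos of order at most $2N$ satisfies $\lVert\cdot\rVert_p\leq(p-1)^{N}\lVert\cdot\rVert_2$, whence $\lVert\Delta_Q R_E(\bullet;N)\rVert_p\leq C(p-1)^{N}(\log E)^{-1/2}\,\mathrm{area}(Q)^{1/2}$. The choice $N=N(E)=\log_5(\log E)$ gives $(p-1)^{N(E)}=(\log E)^{\log_5(p-1)}$, so the bound reads $C(\log E)^{\log_5(p-1)-1/2}\,\mathrm{area}(Q)^{1/2}$. Fixing any $p\in(2,1+\sqrt5)$ keeps $p>2$ while making the exponent $\log_5(p-1)-1/2$ strictly negative; the power of $\log E$ then stays bounded (in fact vanishes), and raising to the power $p$ produces $\E{|\Delta_Q R_E(\bullet;N(E))|^{p}}\leq C'\,\mathrm{area}(Q)^{p/2}$ with $p/2>1$, uniformly in $E$. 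The same scheme, with $\mathrm{length}$ in place of $\mathrm{area}$, handles the one-dimensional increments on the edges of $[0,1]^2$ that the criterion also requires. These are exactly the hypotheses of Proposition \ref{PropTight2}, giving tightness of $\{R_E(\bullet;N(E)):E>0\}$ in $\mathbf{D}_2$; and taking $Q=\D_{\bb{t}}$ in the $L^2$ bound (the three degenerate corners vanish) shows $R_E(\bb{t};N(E))\to0$ in $L^2(\Prob)$, hence that all finite-dimensional distributions degenerate. Tightness together with degenerate limits yields the announced weak convergence to zero.

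The hard part is not the soft machinery (Lemma \ref{Lem:UVW}, Proposition \ref{PropTight2}) but the two-sided balancing act around the normalization $(\log E)^{-1/2}$: the variance bookkeeping must deliver $\sum_{q\geq3}\V{\mathscr{L}_E[2q](Q)}$ genuinely of order $\mathrm{area}(Q)$ with no residual power of $\log E$ — so that boundary/perimeter corrections from the low chaoses are shown to be lower order on \emph{every} rectangle, uniformly — while the hypercontractive cost $(p-1)^{N(E)}$ must be held below $(\log E)^{1/2}$. It is exactly this trade-off that forces both the growth rate $N(E)=\log_5(\log E)$ and the restriction $p<1+\sqrt5$, and reconciling the two effects for a single admissible exponent $p>2$ is the crux of the argument.
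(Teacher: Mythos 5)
Your overall architecture (degenerate finite-dimensional limits plus a moment-based tightness criterion, with hypercontractivity converting the $L^2$ bound on chaotic increments into higher moments, and the growth rate $N(E)=\log_5(\log E)$ calibrated against the hypercontractive constant) is the same as the paper's. The $L^2$ input you need is exactly the paper's Lemma \ref{Cheby}, $\V{\sum_{q\geq 3}\mathscr{L}_E[2q](Q)}\leq C\,\mathrm{area}(Q)$, which the paper extracts directly from the chaos-by-chaos bounds of \cite[Lemmas 7.6, 7.8, 7.9]{NPR:19}; your alternative derivation by subtracting the second and fourth chaoses from the total variance would require a two-term expansion of both quantities that is uniform over all rectangles $Q$, not just the leading asymptotics quoted in Theorem \ref{thm_L}, so citing the NPR lemmas directly is the safer route. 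That much is a difference of bookkeeping, not of substance.

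The genuine gap is in the tightness step: the moment bounds you produce do not match the criterion you invoke. You establish $\E{|\Delta_Q R_E(\bullet;N(E))|^{p}}\leq C\,\mathrm{area}(Q)^{p/2}$ for the \emph{rectangular (four-corner) increment} over a block $Q$, with $p\in(2,1+\sqrt5)$ so that $p/2>1$. That is the hypothesis of a Bickel--Wichura-type block-increment criterion, which the paper never states or verifies. What you actually cite is the Davydov--Zitikis criterion (Theorem \ref{Thm:DZ1Ch}, underlying Proposition \ref{PropTight2}), whose condition (b) concerns the \emph{pointwise} difference and demands $\E{|Y(\bb{t})-Y(\bb{s})|^{\alpha}}\leq c\norm{\bb{t}-\bb{s}}^{\beta}$ with $\beta>d=2$. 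Translating your bound into that form via $\mathrm{area}(\D_{\bb{t}}\triangle\D_{\bb{s}})\leq C\norm{\bb{t}-\bb{s}}$ yields the exponent $\beta=p/2<(1+\sqrt5)/2<2$, which fails condition (b); so with $p<1+\sqrt5$ the criterion you name simply does not apply. To use a pointwise Kolmogorov-type estimate in dimension two you must take $p>4$ --- the paper takes $p=6$, obtaining $\norm{\bb{t}-\bb{s}}^{3}$ and absorbing the hypercontractive cost through the exact balance $5^{6N(E)}=(\log E)^{6}$, which is what forces $N(E)=\log_5(\log E)$ there. To salvage your smaller $p$ you would instead have to import and verify a genuine block-increment tightness criterion (neighboring-block product form, edge increments, behavior at the boundary of $[0,1]^2$), none of which is available off the shelf in the paper; as written, the step ``these are exactly the hypotheses of Proposition \ref{PropTight2}'' is false.
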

Combining this result with the  weak convergence to zero of the second chaotic projections $X_E[2]$ (see Corollary  \ref{Cor:SecondFCLT}) and the weak convergence of $X_E[4]$, is sufficient to derive the following functional limit theorem for the {\bf truncated nodal length process} of order $N=N(E)$.
\begin{Cor}\label{Cor:TruncatedX}
Let $N(E)= \log_5(\log E)$. Then,
as $E\to \infty$, the process 
$$\{X_E(\bb{t};N(E)): \bb{t}\in [0,1]^2\}$$ 
converges weakly to a standard   Wiener sheet $\bb{W}$ on $[0,1]^2$ in $\mathbf{D}_2$.
\end{Cor}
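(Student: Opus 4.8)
The plan is to deduce the result from the three-term decomposition Lemma \ref{Lem:UVW}, in the same spirit as Strategy S. First I would record the Wiener--It\^o expansion of the normalized truncated process. Since only the zeroth chaos contributes to the mean and the nodal length carries no odd chaoses (cf. \eqref{Eq:XE24R}), centering and normalizing $\mathscr{L}_E(\D_{\bb{t}};N)$ gives, for every $\bb{t}\in[0,1]^2$,
$$
X_E(\bb{t};N) = X_E[2](\bb{t}) + X_E[4](\bb{t}) + R_E(\bb{t};N), \qquad R_E(\bb{t};N)=\sum_{q=3}^N X_E[2q](\bb{t}),
$$
that is, the restriction of \eqref{Eq:XE24R} to the chaoses of order at most $2N$. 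As $X_E[2]$ and $X_E[4]$ have a.s.\ continuous paths and $R_E(\cdot;N)$ is $\mathbf{D}_2$-valued (as already used in Proposition \ref{Prop:TruncatedR}), the left-hand side defines a genuine $\mathbf{D}_2$-valued random element; moreover the candidate limit satisfies $\Prob(\bb{W}\in\mathbf{C}_2)=1$, since the Wiener sheet has a.s.\ continuous paths.

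I would then apply Lemma \ref{Lem:UVW} along $N=N(E)=\log_5(\log E)$ with the identification
$$
(X_n,U_n,V_n,W_n)=\big(X_E(\cdot;N(E)),\,X_E[4],\,X_E[2],\,R_E(\cdot;N(E))\big).
$$
Hypothesis (a), the weak convergence of $U_n=X_E[4]$ to $\bb{W}$ in $\mathbf{D}_2$, is exactly Theorem \ref{Thm:DomX4}; hypothesis (b), the weak convergence of $V_n=X_E[2]$ to zero in $\mathbf{D}_2$, is Corollary \ref{Cor:SecondFCLT}. It then remains to check hypothesis (c), namely that $\sup_{\bb{t}\in[0,1]^2}|R_E(\bb{t};N(E))|\to0$ in probability.

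For (c) the input is Proposition \ref{Prop:TruncatedR}, which asserts that $R_E(\cdot;N(E))$ converges weakly to zero in $\mathbf{D}_2$. The only care needed is to upgrade this to the uniform control demanded by hypothesis (c), and here the continuity of the limit is decisive: weak convergence to the constant $0$ is equivalent to convergence in probability to $0$ in the Skorohod metric $d$ of \cite{Ne:71}, and since $0\in\mathbf{C}_2$, convergence in the Skorohod topology to a continuous limit is the same as uniform convergence. Hence $\sup_{\bb{t}}|R_E(\bb{t};N(E))|\to0$ in probability, giving (c). Alternatively, (c) can be read directly off the hypercontractivity moment estimate underlying Proposition \ref{Prop:TruncatedR}, which bounds $\E{\sup_{\bb{t}}R_E(\bb{t};N(E))^2}$. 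With (a)--(c) in hand, Lemma \ref{Lem:UVW} yields the weak convergence of $X_E(\cdot;N(E))$ to $\bb{W}$ in $\mathbf{D}_2$.

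I do not anticipate a genuine obstacle here: all the hard analysis lives in the three cited inputs, and the proof is essentially a matter of correctly matching their modes of convergence to the hypotheses of Lemma \ref{Lem:UVW}. The one point requiring attention is precisely the passage from ``weak convergence to zero in $\mathbf{D}_2$'' in Proposition \ref{Prop:TruncatedR} to the ``uniform convergence in probability'' of hypothesis (c), which as explained is automatic because the limit is continuous.
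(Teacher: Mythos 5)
Your proposal is correct and follows exactly the route the paper intends: decompose $X_E(\cdot;N(E))=X_E[2]+X_E[4]+R_E(\cdot;N(E))$ and feed Theorem \ref{Thm:DomX4}, Corollary \ref{Cor:SecondFCLT} and Proposition \ref{Prop:TruncatedR} into Lemma \ref{Lem:UVW}, with the only technical point being the upgrade from weak convergence of $R_E(\cdot;N(E))$ to zero in $\mathbf{D}_2$ to uniform convergence in probability, which you correctly justify via continuity of the (zero) limit in the Skorohod topology. (Your parenthetical alternative that hypothesis (c) can be "read directly off" a bound on $\E{\sup_{\bb{t}}R_E(\bb{t};N(E))^2}$ is imprecise, since the hypercontractivity estimate controls increments rather than the supremum, but this does not affect your main argument.)
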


%

\begin{Rem}
We point out that our findings outlined in Section \ref{s:main} naturally extend to the case of the {\bf nodal intersection point process} $\{\mathcal{N}_E([0,t_1]\times[0,t_2]):(t_1,t_2) \in [0,1]^2\}$  obtained by counting the nodal intersection points of two independent Berry random waves with the same frequency in the unit square -- see in particular \cite{NPR:19} for univariate results and \cite{PV:20} for multidimensional extensions on such a quantity.
\end{Rem}

The rest of the paper is devoted to the proof of our main results.

\section{Proof of the main results} 
%

\subsection{Proofs of Theorem \ref{AsyVar} and Theorem \ref{CLTd}} \label{Sec:Seg}
In order to prove Theorem \ref{AsyVar} and  Theorem \ref{CLTd}, we first prove their analog statements when the polygonal curves are replaced with straight line segments. More specifically, in Section \ref{LS}, we investigate the limiting covariance structure of $\phi_E$ when restricted to line segments,  by  
carefully taking into account all possible spatial configurations of two line segments. In Proposition \ref{PropCovSeg}, we show that,
in the high energy limit, this covariance is non-zero only when the line segments have a non-trivial intersection, that is, when the line segments are adjacent to each other.
In Proposition \ref{PropCLTLS}, we establish a multidimensional Gaussian limit theorem  for 
random vectors of the form $(\wt{\phi}_E(S_1),\ldots, \wt{\phi}_E(S_d))$, where $S_1,\ldots, S_d$ is a collection of line segments. 
 Our methods rely on both the Fourth Moment Theorem (see \cite[Theorem 5.2.7]{NP:12}) for proving normal approximations of chaotic sequences and its multidimensional counterpart (see \cite[Theorem 6.2.3]{NP:12}). 



\subsubsection{Study of line segments}\label{LS}
Let $S_1$ and $S_2$ be two line segments in $\R^2$, and consider the random variables $\phi_E(S_i)$, $i=1,2$. Our principal aim of this section is to   prove the following result.
\begin{Prop}\label{PropCovSeg} 
Let $S_1$ and $S_2$ be two line segments. Then, we have that, as $E\to \infty$,
\begin{eqnarray}\label{AsyCov}
\Cov{\phi_E(S_1)}{\phi_E(S_2)} = 
\frac{\lambda(S_1, S_2)}{16\pi^2\sqrt{E}}+o\left(\frac{1}{\sqrt{E}}\right) ,
\end{eqnarray}
where $\lambda(S_1, S_2)$ is the signed length  of $S_1\cap S_2$.
\end{Prop}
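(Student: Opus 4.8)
The plan is to reduce the covariance to an explicit oscillatory double integral over $S_1\times S_2$, to isolate its leading contribution near the diagonal, and then to dispatch the remaining geometric configurations by extracting cancellation from oscillation.

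First I would record that each $\phi_E(S_i)$ is a centered element of the second Wiener chaos of $B_E$: since $r^E$ is radial one has $\nabla r^E(0)=0$, whence $\E{B_E(x)\scal{\nabla B_E(x)}{\n_{S_i}}}=\scal{\nabla r^E(0)}{\n_{S_i}}=0$, so $\phi_E(S_i)$ has mean zero and, being a line integral of the (mean-zero, hence Wick-ordered) product $B_E\,\scal{\nabla B_E}{\n_{S_i}}$, lies in the second chaos. Consequently $\Cov{\phi_E(S_1)}{\phi_E(S_2)}=\E{\phi_E(S_1)\phi_E(S_2)}$, and applying Isserlis' theorem to the centered Gaussian vector $\bigl(B_E(x),\,\scal{\nabla B_E(x)}{\n_{S_1}},\,B_E(y),\,\scal{\nabla B_E(y)}{\n_{S_2}}\bigr)$ --- in which the same-point pairing vanishes --- yields
\begin{equation*}
\Cov{\phi_E(S_1)}{\phi_E(S_2)}=\frac{1}{128\pi^2 E}\int_{S_1}\int_{S_2}\Theta_E(x,y)\,\mathcal{H}^1(dx)\,\mathcal{H}^1(dy),
\end{equation*}
where, with all derivatives of $r^E$ taken at $x-y$,
\begin{equation*}
\Theta_E(x,y)=-\,r^E\,\scal{\n_{S_1}}{(\nabla^2 r^E)\,\n_{S_2}}-\scal{\n_{S_1}}{\nabla r^E}\,\scal{\n_{S_2}}{\nabla r^E}.
\end{equation*}
This uses only $\E{B_E(x)B_E(y)}=r^E(x-y)$ together with the first- and second-order identities $\E{\scal{\nabla B_E(x)}{\n_{S_1}}B_E(y)}=\scal{\n_{S_1}}{\nabla r^E(x-y)}$ and $\E{\scal{\nabla B_E(x)}{\n_{S_1}}\scal{\nabla B_E(y)}{\n_{S_2}}}=-\scal{\n_{S_1}}{(\nabla^2 r^E)(x-y)\,\n_{S_2}}$.

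Next, writing $\kappa:=2\pi\sqrt E$ and $r^E(z)=J_0(\kappa\norm z)$, I would observe that $\Theta_E$ is a function of $x-y$ whose amplitude decays like a negative power of $\kappa\norm{x-y}$ --- via $J_0'=-J_1$ and the large-argument bounds $J_0(t),J_1(t)=O(t^{-1/2})$ --- while oscillating at frequency $2\kappa$; hence the mass of the integral concentrates near the diagonal $x=y$. The decisive case is when $S_1,S_2$ are collinear with $\ell:=\mathrm{length}(S_1\cap S_2)>0$. Parametrising both by arc length along the common line, one has $\norm{x-y}=|s-t|$; because $r^E$ is radial its gradient is parallel to that line, so $\scal{\n_{S_i}}{\nabla r^E}=0$ on the overlap and the second term of $\Theta_E$ drops out, while $\scal{\n_{S_1}}{(\nabla^2 r^E)\,\n_{S_2}}=\sigma\,\partial_2^2 r^E(|s-t|,0)$ with $\sigma:=\scal{\n_{S_1}}{\n_{S_2}}\in\{-1,+1\}$. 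Using $\partial_2^2 r^E(z_1,0)=\tfrac{1}{|z_1|}\tfrac{d}{d\rho}J_0(\kappa\rho)\big|_{\rho=|z_1|}=-\kappa J_1(\kappa|z_1|)/|z_1|$, the integrand on the overlap becomes $\sigma\,\kappa\,J_0(\kappa|s-t|)J_1(\kappa|s-t|)/|s-t|$. A localisation near the diagonal and the rescaling $w=\kappa(s-t)$ then reduce the double integral to $\sigma\,\ell\,\kappa\int_{\R}\tfrac{J_0(|w|)J_1(|w|)}{|w|}\,dw$ up to lower-order boundary terms, and the Weber--Schafheitlin evaluation $\int_0^\infty \tfrac{J_0(t)J_1(t)}{t}\,dt=\tfrac2\pi$ gives $\sigma\,\ell\,\kappa\cdot\tfrac4\pi$. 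Inserting this into the prefactor $1/(128\pi^2E)$ and using $\kappa=2\pi\sqrt E$ and $\sigma\ell=\lambda(S_1,S_2)$ produces exactly the claimed leading term $\lambda(S_1,S_2)/(16\pi^2\sqrt E)$.

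It remains to prove that every other configuration contributes $o(1/\sqrt{E})$, and this is where I expect the genuine difficulty. The relevant configurations, all with $\lambda(S_1,S_2)=0$, are a transversal crossing at a single point, parallel segments on distinct lines, and collinear but essentially disjoint segments. The obstacle is that the crude envelope estimate $|\Theta_E|\lesssim\kappa/\norm{x-y}$ only gives $O(1/\sqrt{E})$ --- the same order as the main term --- so one cannot conclude by taking absolute values and must exploit the oscillation. My plan is to substitute the two-term Bessel asymptotics, representing $\Theta_E$ as a slowly varying amplitude times $e^{\pm 2i\kappa\norm{x-y}}$ plus a faster-decaying remainder, and then to run a non-stationary phase argument: away from the collinear-overlap situation the gradient of the phase $(x,y)\mapsto\norm{x-y}$ does not vanish on $S_1\times S_2$, so repeated integration by parts in the arc-length parameters gains arbitrarily many powers of $\kappa^{-1}$. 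The transversal intersection point and the segment endpoints, which sit on lower-dimensional sets where the phase may degenerate, must be treated separately, but a direct estimate confines their contribution to $O(\kappa^{-2})=o(1/\sqrt E)$ after the prefactor. Combining the collinear-overlap leading term with these negligible remainders yields \eqref{AsyCov}.
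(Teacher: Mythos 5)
Your proposal is correct in overall strategy and reproduces the paper's leading term by essentially the same mechanism, but it closes the error terms by a genuinely different route. The reduction to the double integral of $\Theta_E$ via Isserlis, the observation that the collinear overlap is the sole source of the main term, and its evaluation are all in line with the paper: where you invoke the Weber--Schafheitlin value $\int_0^\infty J_0(t)J_1(t)t^{-1}\,dt=2/\pi$ after rescaling, the paper reaches the same number through the exact antiderivative $\frac{d}{dv}\left[v(J_0(v)^2+J_1(v)^2)-J_0(v)J_1(v)\right]=J_0(v)J_1(v)/v$ together with $v(J_0(v)^2+J_1(v)^2)\to 2/\pi$, plus an additivity argument to pass from $S_1=S_2$ to a general overlap; the two computations are equivalent. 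The divergence is in the remaining configurations. You correctly diagnose that absolute-value bounds only give $O(E^{-1/2})$ there and that cancellation must be extracted, and your structural claim is right: after the two-term Bessel asymptotics the non-oscillating component of the amplitude cancels identically (e.g. $J_0J_2+J_1^2\sim\frac{2}{\pi t}\sin(2t)$), so a non-stationary-phase argument is viable since $\nabla\lVert x-y\rVert$ cannot be simultaneously orthogonal to two non-parallel directions. The paper instead passes to polar coordinates centred at the intersection of the supporting lines, uses the Jacobian factor $\rho\,d\rho$ and the substitution $\psi=\tilde\tau^E(\rho,\phi)$ to reduce everything to one-dimensional Bessel integrals with explicit primitives ($\int\psi J_0(J_0+J_2)\,d\psi=-J_0(\psi)^2$) or logarithmically growing primitives, yielding $O(E^{-1}\log E)$; this is more computational but fully explicit, and it buys a quantitative rate that your scheme would also deliver only after careful bookkeeping. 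The one place where your accounting is too quick is the excised neighbourhood of the intersection point: a direct estimate of a $\delta$-ball there costs $O(\kappa\delta)$ before the prefactor, i.e. $O(\delta E^{-1/2})$ after it, so it is not $O(\kappa^{-2})$ as stated; you must let $\delta=\delta(E)\to 0$ and then check that the integration by parts on the complement, where the amplitude derivative is of size $\kappa/\rho^2$, only produces a $\log(1/\delta)$ loss. This closes (take e.g. $\delta=E^{-1/4}$), but it is precisely the work your sketch defers, and it is the part the paper's polar-coordinate substitution handles automatically.
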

We start with some ancillary computations. Introducing normalized derivatives $
\tilde{\partial_i}  := \sqrt{2\pi^2 E} \partial_i  , \  i=1,2$ (where $\partial_i:=\partial_{x_i}=\partial/\partial x_i$)
and exploiting the definition of $\phi_E$ in  \eqref{Phi},  we have that, for every $\CC_1,\CC_2\in \mathscr{C}$ (writing $dx$ for $\mathcal{H}^1(dx)$ for brevity),
\begin{eqnarray}\label{CovA}
&&\Cov{\phi_E(\CC_1)}{\phi_E(\CC_2)} 
= \E{\phi_E(\CC_1) \phi_E(\CC_2)} \notag\\
&=& \frac{1}{128 \pi^2 E} \int_{\CC_1\times \CC_2} \E{B_E(x) \scal{\nabla B_E(x)}{\n_{\CC_1}(x)} B_E(y) \scal{\nabla B_E(y)}{\n_{\CC_2}(y)} } dxdy\notag\\
&=&\frac{1}{128 \pi^2 E} \sum_{i,j=1}^2 \int_{\CC_1\times \CC_2} 
\E{B_E(x)B_E(y)
\partial_iB_E(x)  \partial_jB_E(y) } \n_{\CC_1}^i(x)\n_{\CC_2}^j(y)
dxdy\notag\\
&=&\frac{2\pi^2E}{128 \pi^2 E} \sum_{i,j=1}^2 \int_{\CC_1\times \CC_2} 
\E{B_E(x)B_E(y)
\tilde{\partial_i}B_E(x)  \tilde{\partial_j}B_E(y) } \n_{\CC_1}^i(x)\n_{\CC_2}^j(y)dxdy\notag\\
&=: & \frac{1}{64} \sum_{i,j=1}^2 \int_{\CC_1\times \CC_2} \psi^E_{i,j}(x,y) \n_{\CC_1}^i(x)\n_{\CC_2}^j(y) dxdy,
\end{eqnarray}
where $\psi^E_{i,j}:\R^2\to\R$ is the function
\begin{eqnarray*}
\psi^E_{i,j}(x,y):=
\E{B_E(x)B_E(y)\tilde{\partial_i}B_E(x)  \tilde{\partial_j}B_E(y) }, \quad i,j=1,2.
\end{eqnarray*}
Since for every $x,y \in \R^2$ and every $i=1,2$, $(B_E(x),B_E(y),
\tilde{\partial_i}B_E(x),  \tilde{\partial_j}B_E(y))$ is a centered Gaussian vector, we 
apply Feynmann's formula (see for instance \cite[Proposition 4.15]{MP:11}) in order to simplify the above expression: for jointly Gaussian centered random variables $Z_1,\ldots,Z_4$, we have 
\begin{gather*}
\E{Z_1Z_2Z_3Z_4} = \gamma_{12}\gamma_{34}+\gamma_{13}\gamma_{24}+\gamma_{14}\gamma_{23} , \quad \gamma_{ij}:=\E{Z_iZ_j}.
\end{gather*}
Therefore, exploiting the covariance structure of the vector $(B_E(x),B_E(y),
\tilde{\partial_i}B_E(x),  \tilde{\partial_j}B_E(y))$ (see \cite[Lemma 3.1]{NPR:19}), we obtain
\begin{eqnarray}\label{PSI}
\psi^E_{i,j}(x,y)  
&=& \E{B_E(x)B_E(y)}\E{\tilde{\partial_i}B_E(x) \tilde{\partial_j}B_E(y)}+\E{B_E(x)\tilde{\partial_i}B_E(x) }\E{B_E(y)\tilde{\partial_j}B_E(y)} \notag\\ 
&& \quad+\E{B_E(x)\tilde{\partial_j}B_E(y)}\E{B_E(y)\tilde{\partial_i}B_E(x)}  \notag \\ 
&=&  r^E(x-y) \tilde{r}_{i,j}^E(x-y) - \tilde{r}^E_{0,j}(x-y)\tilde{r}^E_{0,i}(x-y),
\end{eqnarray}
where the second term is equal to zero by independence of 
$B_E(x)$ and $\nabla B_E(x)$ for every fixed $x\in \R^2$ and we set \begin{eqnarray*}
\tilde{r}_{i,j}^E(x-y) := \tilde{\partial}_{x_i}\tilde{\partial}_{y_i} r^E(x-y), \quad  i,j=0,1,2 
\end{eqnarray*}
where $r^E$ is as in \eqref{covE} and we adopt the convention that $\partial_0$ is the identity operator.
We now restrict \eqref{CovA} to the case where $\CC_i=S_i,i=1,2$ are straight line segments. Denoting by 
\begin{gather*}
B_E^{(\theta)}(x) := B_E(R_{\theta}x) , \quad   \quad 
\hat{B}_E^{(L)}(x):=B_E(x+L) , \quad x\in \R^2,  \theta\in [0,2\pi], L\in \R^2
\end{gather*}
where $R_{\theta}\in \mathcal{M}_{2\times 2}(\R)$ stands for the rotation matrix associated with angle $\theta$, it follows by isotropy and stationarity of   Berry's random field that
$B_E \eqLaw B_E^{(\theta)} $ and $ B_E \eqLaw \hat{B}_E^{(L)}
$, 
where $\eqLaw$ denotes equality in distribution of random fields.
These observations imply that for every choice of $x,y \in \R^2$, the pairs
$(B_E^{(\theta)}(x), B_E^{(\theta)}(y))$ and $(\hat{B}_E^{(L)}(x),\hat{B}_E^{(L)}(y))$ have the same distribution as $(B_E(x),B_E(y))$. As a consequence, we reduce our investigations to line segments given by the unit speed parametrizations
\begin{eqnarray}
&&\gamma_1: [0,\lambda_1] \to S_1, \quad t \mapsto \gamma_1(t) := te_1 \label{Seg1}\\
&&\gamma_2: [0,\lambda_2] \to S_2, \quad t \mapsto \gamma_2(t) := p+t\rho(\theta) \label{Seg2}
\end{eqnarray}
where $\lambda_i>0,i=1,2$ is the length of $S_i$, $e_i$ is the $i$-th canonical basis vector of $\R^2$, $p=(p_1,p_2)\in \R^2$ and $\rho(\theta):=(\cos\theta,\sin\theta)$ for $\theta\in [0,2\pi)$. 
\begin{Rem}
In view of the definition of $\phi_E$, it follows that whenever $S$ is a line segment given by a union of line segments $S = S' \cup S''$ sharing only one point, then  
$\phi_E(S) = \phi_E(S') + \phi_E(S'')$. It follows that, one can always express the covariance associated with arbitrary line segments as a linear combination involving only covariances associated with line segments that have the same origin. 
This implies that, in \eqref{Seg2}, we can consistently reduce to the case $p=(0,0)$, that is when $S_1$ and $S_2$ have the same origin, except when $S_1$ and $S_2$ are parallel but disjoint. Indeed, in order to see this, let us assume that $S_1=[P_1,Q_1]$ and $S_2=[P_2,Q_2]$ for points $P_1,P_2,Q_1,Q_2\in \R^2$ such that $p=P_2 \neq (0,0)$. (Here for $A,B\in \R^2$, we use the notation $[A,B]$ to indicate the line segment joining $A$ and $B$.) Denote by $\ell_1$ and $\ell_2$ the lines directed by $S_1$ and $S_2$ respectively and let $I= \ell_1 \cap \ell_2$. If $S_1\cap S_2 = \{I\}$, we consider the four line segments $[P_2,I], [I,Q_2], [P_1,I]$ and $[I,Q_1]$. By construction, we thus have $\phi_E(S_1)=\phi_E([P_1,I])+\phi_E([I,Q_1])$ and 
$\phi_E(S_2)=\phi_E([P_2,I])+\phi_E([I,Q_2])$, so that  
\begin{eqnarray*}
\Cov{\phi_E(S_1)}{\phi_E(S_2)} 
&=& \Cov{\phi_E([P_1,I])}{\phi_E([P_2,I])} 
+ \Cov{\phi_E([P_1,I])}{\phi_E([I,Q_2])}\\
&&+ \Cov{\phi_E([I,Q_1])}{\phi_E([P_2,I])}
+ \Cov{\phi_E([I,Q_1])}{\phi_E([I,Q_2])}
\end{eqnarray*}
and each of these covariances contains only line segments with common point $I$, which one can set to be the origin by translation invariance of the Berry random field. Similarly, if $S_1\cap S_2=\emptyset$, we consider the line segments $[I,P_1]$ and $[I,P_2]$. Then, again by linearity we can write (up to sign, which is determined by the orientation of $S_1$) 
$\phi_E([I,Q_1])-\phi_E([I,P_1])=\phi_E(S_1)$ and 
 $\phi_E([I,Q_2])-\phi_E([I,P_2])=\phi_E(S_2)$, so that
\begin{eqnarray*}
 \Cov{\phi_E(S_1)}{\phi_E(S_2)} &=& \Cov{\phi_E([I,Q_1])}{\phi_E([I,Q_2])}
 -\Cov{\phi_E([I,Q_1])}{\phi_E([I,P_2])}\\
&& -\Cov{\phi_E([I,P_1])}{\phi_E([I,Q_2])}+ \Cov{\phi_E([I,P_1])}{\phi_E([I,P_2])}, 
\end{eqnarray*}
and the covariances on the right hand side can be dealt with setting $I=(0,0)$ as before.
\begin{center}
\includegraphics[scale=0.3]{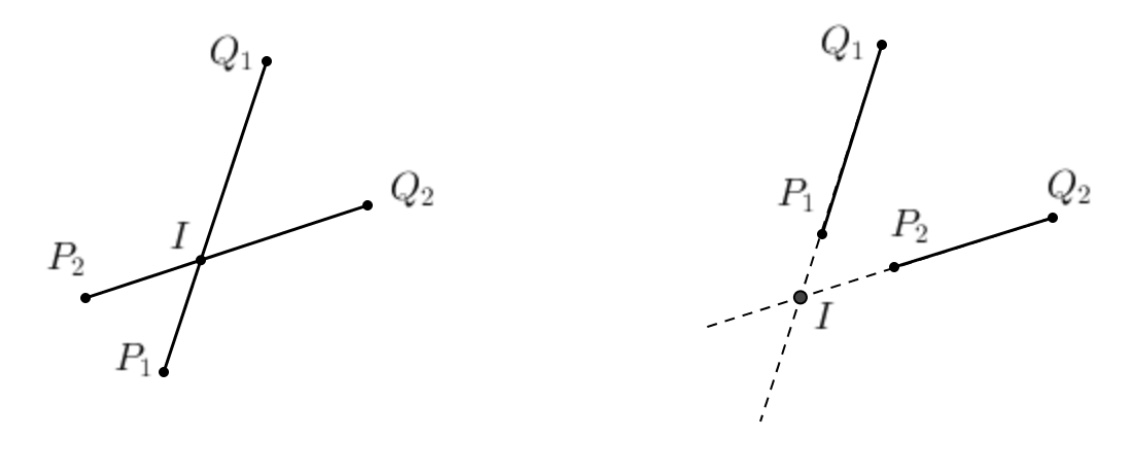}
\end{center}

In view of the above reductions, throughout this section, we will assume  that $S_1$ and $S_2$ are parametrized as in \eqref{Seg1} and \eqref{Seg2}, respectively with $p=(0,0)$ and $\theta \in [0,2\pi)$.
The fact that $\n_{S_1}(x) = e_2$ for every $x \in S_1$ and 
$\n_{S_2}(x) = \rho(\theta)^{\perp} = (-\sin\theta,\cos\theta)$ for every $x\in S_2$ yields
\begin{eqnarray}\label{General}
&&\Cov{\phi_E(S_1)}{\phi_E(S_2)} = \frac{1}{64}  \int_{S_1\times S_2} [\psi^E_{2,2}(x,y)\cos\theta- \psi^E_{2,1}(x,y)\sin\theta]dxdy \notag \\
&&= \frac{1}{64}  \int_{0}^{\lambda_1} dt \int_{0}^{\lambda_2} ds [\psi^E_{2,2}(\gamma_1(t),\gamma_2(s))\cos\theta- \psi^E_{2,1}(\gamma_1(t),\gamma_2(s))\sin\theta],
\end{eqnarray}
where $\psi_{i,j}^E$ is as in \eqref{PSI}.
From the parametrizations in \eqref{Seg1} and \eqref{Seg2}, it follows that 
\begin{eqnarray}\label{norm}
\norm{\gamma_1(t)-\gamma_2(s)}^2 
= \norm{te_1-s\rho(\theta)}^2 = t^2+s^2-2st\scal{e_1}{\rho(\theta)}
= t^2+s^2-2st\cos\theta.
\end{eqnarray}
\end{Rem}

\smallskip
Now, computations based on the explicit expressions of the functions $r^E, \tilde{r}^E_{i,j}$ for $i,j=0,1,2$ in terms of Bessel functions (see \cite[Lemma 3.1]{NPR:19}), lead to (for $\gamma_1(t)\neq \gamma_2(s)$)
\begin{eqnarray*}
\psi^E_{2,2}(\gamma_1(t),\gamma_2(s))& =&
J_0(\tau^E(t,s))\left( 
J_0(\tau^E(t,s))+J_2(\tau^E(t,s))\right) \\
&& - 2\frac{s^2 \sin^2\theta}{\norm{\gamma_1(t)-\gamma_2(s)}^2} \left(J_0(\tau^E(t,s))J_2(\tau^E(t,s))+J_1(\tau^E(t,s))^2\right)
\end{eqnarray*}
and 
\begin{eqnarray*}
\psi^E_{2,1}(\gamma_1(t),\gamma_2(s))& =&
2\frac{(t-s\cos\theta)s\sin\theta}{\norm{\gamma_1(t)-\gamma_2(s)}^2} \left(J_0(\tau^E(t,s))J_2(\tau^E(t,s))+J_1(\tau^E(t,s))^2\right),
\end{eqnarray*}
where we set $\tau^E(t,s):=2\pi\sqrt{E}\norm{\gamma_1(t)-\gamma_2(s)}$. 
As a consequence, by \eqref{norm}, we have that  
\begin{eqnarray*}
&&\psi^E_{2,2}(\gamma_1(t),\gamma_2(s))\cos\theta- \psi^E_{2,1}(\gamma_1(t),\gamma_2(s))\sin\theta \\
&=& \cos\theta J_0(\tau^E(t,s))
\left( J_0(\tau^E(t,s))+J_2(\tau^E(t,s))\right) \\
&&- \left( \frac{2s^2\sin^2\theta\cos\theta+2(t-s\cos\theta)s\sin^2\theta}{\norm{\gamma_1(t)-\gamma_2(s)}^2}\right) \left(J_0(\tau^E(t,s))J_2(\tau^E(t,s))+J_1(\tau^E(t,s))^2\right)\\
&=&\cos\theta J_0(\tau^E(t,s))
\left( J_0(\tau^E(t,s))+J_2(\tau^E(t,s))\right) \\
&&-   \frac{2ts\sin^2\theta}{t^2+s^2-2st\cos\theta}  \left(J_0(\tau^E(t,s))J_2(\tau^E(t,s))+J_1(\tau^E(t,s))^2\right) \\
&=&2\cos\theta \frac{J_0(\tau^E(t,s))
J_1(\tau^E(t,s))}{\tau^E(t,s))} 
\\ &&-   \frac{2ts\sin^2\theta}{t^2+s^2-2st\cos\theta} \left(J_0(\tau^E(t,s))J_2(\tau^E(t,s))+J_1(\tau^E(t,s))^2\right) ,
\end{eqnarray*}
where in the last line, we exploited the recurrence relation $J_{n+1}(x)+J_{n-1}(x)=2n J_{n}(x)/x, n> 0, x\in \R$ (see e.g. \cite[Equation (1.71.5)]{Sz:75}) implying the useful identity
\begin{eqnarray}\label{Rec}
J_0(x)+J_2(x) = 2\frac{J_1(x)}{x}.
\end{eqnarray}
Inserting this expression into \eqref{General}, we obtain that 
\begin{eqnarray*}
 &&\Cov{\phi_E(S_1)}{\phi_E(S_2)}  \\
&&= \frac{2\cos\theta}{64}  \int_{0}^{\lambda_1} dt \int_{0}^{\lambda_2} ds 
\frac{J_0(\tau^E(t,s))J_1(\tau^E(t,s))}{\tau^E(t,s)}
\\
&&-\frac{2\sin^2\theta}{64}\int_{0}^{\lambda_1} dt \int_{0}^{\lambda_2} ds 
\frac{ts\left(J_0(\tau^E(t,s))J_2(\tau^E(t,s))+J_1(\tau^E(t,s))^2\right)}{t^2+s^2-2st\cos\theta}   \\
&=:& A_E(\lambda_1,\lambda_2,\theta ) + B_E(\lambda_1,\lambda_2,\theta ) ,
\end{eqnarray*}
where 
\begin{gather}
 A_E(\lambda_1,\lambda_2,\theta ):= \frac{\cos\theta}{32}  \int_{0}^{\lambda_1} dt \int_{0}^{\lambda_2} ds 
\frac{J_0(\tau^E(t,s))J_1(\tau^E(t,s))}{\tau^E(t,s)}
\label{A}\\
 B_E(\lambda_1,\lambda_2,\theta ) :=-\frac{ \sin^2\theta}{32}\int_{0}^{\lambda_1} dt \int_{0}^{\lambda_2} ds 
\frac{ts\left(J_0(\tau^E(t,s))J_2(\tau^E(t,s))+J_1(\tau^E(t,s))^2\right)}{t^2+s^2-2st\cos\theta}  \label{B}
\end{gather}
and where we recall the notation $\tau^E(t,s)= 2\pi\sqrt{E} \sqrt{t^2+s^2-2st\cos\theta}$.
We note that if $\theta \in \{0,\pi\}$, then  $B_E(\lambda_1,\lambda_2,\theta )=0$, so that for parallel line segments we immediately deduce that 
\begin{eqnarray*}
\Cov{\phi_E(S_1)}{\phi_E(S_2)} = A_E(\lambda_1,\lambda_2,\theta ).
\end{eqnarray*}
We are now in position to prove Proposition 
\ref{PropCovSeg}.
\begin{proof}[Proof of Proposition \ref{PropCovSeg}]
Throughout the proof, we can and will assume without loss of generality, that $S_1$ and $S_2$ are both oriented in the same way. Indeed, if $S$ is a line segment with a given orientation, then $-S$ is the same line segment with opposite orientation to $S$, so that  $\phi_E(-S)=-\phi_E(S)$.

\medskip
In order to prove the statement, we distinguish two cases: \textit{(A)} $S_1$ and $S_2$ are parallel, and \textit{(B)} $S_1$ and $S_2$ are not parallel.

\medskip
\noindent\underline{\textit{Case (A):}}
We treat the case where $S_1$ and $S_2$ are parallel line segments. 
Let
$\gamma_1: t\in [a,b] \mapsto te_1$ and $\gamma_2: t\in [c,d]\mapsto te_1+Le_2$ where $0\leq a<b,0\leq c<d$ and $L\geq 0$ are fixed real numbers be the respective parametrizations of $S_1$ and $S_2$. Note that the case $L=0$ corresponds to the configuration where $S_1$ and $S_2$ are supported by the same line, whereas, the case $L>0$ corresponds to the case where 
$S_1$ and $S_2$ are supported by parallel distinct lines. 
We have that $\norm{\gamma_1(t)-\gamma_2(s)}^2  = \norm{(t,0)-(s,L)}^2=(t-s)^2+L^2$. Therefore, performing the linear change of variables $(u,v)= (t,t-s)$, we infer   
\begin{eqnarray}\label{COL}
&&\Cov{\phi_E(S_1)}{\phi_E(S_2)} \notag \\ 
&=&\frac{1}{32} \int_{a}^b dt\int_{c}^d ds \frac{J_0\left(2\pi\sqrt{E} 
\sqrt{(t-s)^2+L^2}\right)J_1\left(2\pi\sqrt{E} 
\sqrt{(t-s)^2+L^2}\right)}{2\pi\sqrt{E} 
\sqrt{(t-s)^2+L^2}}
\notag\\ 
&=& \frac{1}{32} \int_{a}^b du\int_{u-d}^{u-c} dv \frac{J_0\left(2\pi\sqrt{E} 
\sqrt{v^2+L^2}\right)J_1\left(2\pi\sqrt{E} 
\sqrt{v^2+L^2}\right)}{2\pi\sqrt{E} 
\sqrt{v^2+L^2}} \notag\\ 
&=& \frac{1}{32} \int_{a}^b du \frac{1}{2\pi\sqrt{E}}\int_{2\pi\sqrt{E}(u-d)}^{2\pi\sqrt{E}(u-c)} dv \frac{J_0\left( 
\sqrt{v^2+(2\pi\sqrt{E}L)^2}\right) J_1\left( \sqrt{v^2+(2\pi\sqrt{E}L)^2}\right) }{\sqrt{v^2+(2\pi\sqrt{E}L)^2}} \notag\\ 
&=:&\frac{1}{32} \int_{a}^b du K^E(u;L,c,d), 
\end{eqnarray}
where  we set 
\begin{gather}\label{KL}
K^E(u;L,c,d) :=\frac{1}{2\pi\sqrt{E}} \int_{2\pi\sqrt{E}(u-d)}^{2\pi\sqrt{E}(u-c)} dv 
\frac{J_0\left( 
\sqrt{v^2+(2\pi^2\sqrt{E}L)^2}\right) J_1\left( \sqrt{v^2+(2\pi\sqrt{E}L)^2}\right) }{\sqrt{v^2+(2\pi\sqrt{E}L)^2}}.
\end{gather} 
Note that, using \eqref{Rec} and the bound $|J_{\nu}(x)|\leq 1, \nu=0,1,2$ implies that 
\begin{eqnarray*}
\left|\frac{J_0(x)J_1(x)}{x}\right| = \frac{1}{2}\left|J_0(x)\left(J_0(x)+J_2(x)\right) \right| \leq 1
\end{eqnarray*}
so that 
\begin{eqnarray*}
\left|K^E(u;L,c,d)\right|  
\leq\frac{1}{2\pi\sqrt{E}}\int_{2\pi\sqrt{E}(u-d)}^{2\pi\sqrt{E}(u-c)} dv   = d-c,
\end{eqnarray*}
for every $u\in (a,b)$ and every $c<d$, so that $K^E(u;L,c,d)$ is integrable on $(a,b)$. 
We now study the two cases $L>0$ and $L=0$ separately. 

\medskip
\noindent\underline{\textit{Case (A.1): $L>0$}.} Fix $L>0$. 
We show that $K^E(u;L,c,d) = o(1/\sqrt{E})$ as $E \to \infty$ uniformly for  $u \in (a,b)$. Indeed, using the fact that $|J_{\nu}(x)| = O(x^{-1/2})$ for $x>0$ and $\nu=0,1,2$, we infer   from \eqref{KL} that 
for every $u \in (a,b)$, 
\begin{eqnarray*}
&&\sqrt{E}\left|K^E(u;L,c,d)\right|  \\
&&\leq
\frac{O(1)}{2\pi} \int_{2\pi\sqrt{E}(u-d)}^{2\pi\sqrt{E}(u-c)} dv 
\frac{1 }{v^2+(2\pi\sqrt{E}L)^2}
\leq \frac{O(1)}{2\pi} \int_{2\pi\sqrt{E}(u-d)}^{2\pi\sqrt{E}(u-c)} dv 
\frac{1 }{E} = O(E^{-1/2}),
\end{eqnarray*}
where the constant involved in the 'big-O' notation does not depend on $u$. Thus, $$\sqrt{E}K^E(u;L,c,d)\to 0$$ as $E \to \infty$ uniformly on $(a,b)$, and therefore we infer from \eqref{COL}
\begin{eqnarray*}
\Cov{\phi_E(S_1)}{\phi_E(S_2)} = O\left(\frac{1}{E}\right),
\end{eqnarray*}
as $E\to\infty$, which gives the desired conclusion.

\medskip
\noindent\underline{\textit{Case (A.2): $L=0$}.}
Setting $L=0$ in \eqref{KL}, we obtain 
\begin{eqnarray*}
K^E(u;0,c,d) = 
\frac{1}{2\pi\sqrt{E}} \int_{2\pi\sqrt{E}(u-d)}^{2\pi\sqrt{E}(u-c)} dv 
\frac{J_0(|v|) J_1(|v|)}{|v|}.
\end{eqnarray*} 
In order to show \eqref{AsyCov}, we treat the   two cases (i) $ [a,b]\cap[c,d] = \emptyset$  and 
(ii) $ [a,b]\cap[c,d] \neq \emptyset$. 
We start by case (i). This is the case when $a<b<c<d$ or $c<d<a<b$. We only treat the case $a<b<c<d$ as the other case is dealt with similarly. The assumption $a<b<c<d$ implies that 
$u-c<0$ and $u-d<0$ for every $u \in (a,b)$. Then, using the fact that $|J_{\nu}(x)| =O(x^{-1/2})$ for $x>0$, we have 
\begin{eqnarray*}
\sqrt{E} \left|K^E(u;0,c,d) \right| 
&\leq& \frac{1}{2\pi } \int_{2\pi\sqrt{E}(u-d)}^{2\pi\sqrt{E}(u-c)} dv 
\frac{|J_0(|v|) J_1(|v|)|}{|v|} \\
&=& \frac{O(1)}{2\pi } \int_{2\pi\sqrt{E}(u-d)}^{2\pi\sqrt{E}(u-c)}  
\frac{dv}{v^2}
= O\left(\frac{1}{\sqrt{E}}\right) \left(\frac{1}{c-u}-\frac{1}{d-u}\right),
\end{eqnarray*}
which goes to zero as  $E \to \infty$, uniformly for $u\in (a,b)$. Thus, from \eqref{COL} it follows that in this case 
\begin{eqnarray*}
\Cov{\phi_E(S_1)}{\phi_E(S_2)}
= O\left(\frac{1}{E}\right),
\end{eqnarray*}
which implies \eqref{AsyCov}.

\noindent
We now study the case (ii) and start with the special case $(c,d)=(a,b)$, that is when $S_1=S_2$. 
From \eqref{COL}, we write 
\begin{eqnarray*}
\sqrt{E} \Cov{\phi(S_1)}{\phi(S_1)}
 =
\frac{1}{32} \int_{a}^b du   \sqrt{E} K^E(u;0,a,b)
\end{eqnarray*}
with 
\begin{gather*}
\sqrt{E}K^E(u;0,a,b) = 
\frac{1}{2\pi} \int_{2\pi\sqrt{E}(u-b)}^{2\pi\sqrt{E}(u-a)} dv 
\frac{J_0(|v|) J_1(|v|)}{|v|}
= \frac{1}{2\pi} \int_{2\pi\sqrt{E}(u-b)}^{2\pi\sqrt{E}(u-a)} dv 
\frac{J_0(v) J_1(v)}{v},
\end{gather*}
where we used that $J_0$ is even and $J_1$ is odd. Now computations based on differentiation of Bessel functions imply that $\frac{d}{dv}[v(J_0(v)^2+J_1(v)^2)-J_0(v)J_1(v)] = J_0(v)J_1(v)/v$, so that
\begin{eqnarray*}
\sqrt{E}K^E(u;0,a,b) 
= \frac{1}{2\pi}\left[v(J_0(v)^2+J_1(v)^2)-J_0(v)J_1(v)\right]_{2\pi\sqrt{E}(u-b)}^{2\pi\sqrt{E}(u-a)}
\end{eqnarray*}
and therefore
\begin{eqnarray}\label{int2}
\sqrt{E} \Cov{\phi_E(S_1)}{\phi_E(S_1)}
&=& \frac{1}{64\pi}\int_a^b du \left[v(J_0(v)^2+J_1(v)^2) \right]_{2\pi\sqrt{E}(u-b)}^{2\pi\sqrt{E}(u-a)}\notag\\
&&- \frac{1}{64\pi}\int_a^b du \left[J_0(v)J_1(v)\right]_{2\pi\sqrt{E}(u-b)}^{2\pi\sqrt{E}(u-a)}.
\end{eqnarray}
For the first term, we use the dominated convergence theorem: since $|J_{\nu}(x)|\leq C_{\nu}x^{-1/2}, x>0$ for some constant $C_{\nu}>0$, it follows that 
\begin{eqnarray*}
\left| \left[v(J_0(v)^2+J_1(v)^2) \right]_{2\pi\sqrt{E}(u-b)}^{2\pi\sqrt{E}(u-a)}\right|
\leq 2[C_0^2+C_1^2]
\end{eqnarray*}
which is integrable on $(a,b)$. Setting $\mathfrak{f}(v):=v(J_0(v)^2+J_1(v)^2)$, we have $\mathfrak{f}(-v)=-\mathfrak{f}(v)$ and 
\begin{eqnarray*}
\left| \left[v(J_0(v)^2+J_1(v)^2) \right]_{2\pi\sqrt{E}(u-b)}^{2\pi\sqrt{E}(u-a)}\right|
= \mathfrak{f}(2\pi\sqrt{E}(u-a))-\mathfrak{f}(2\pi\sqrt{E}(u-b)),
\end{eqnarray*}
so that 
\begin{eqnarray*}
\lim_{E\to \infty} \left| \left[v(J_0(v)^2+J_1(v)^2) \right]_{2\pi\sqrt{E}(u-b)}^{2\pi\sqrt{E}(u-a)}\right|
=  \lim_{E\to \infty} 2\mathfrak{f}(2\pi\sqrt{E}(u-a)) = \lim_{y\to \infty}2\mathfrak{f}(y)
\end{eqnarray*}
since $u-a>0$.
Now, the asymptotic expansion of Bessel functions (see for instance \cite{Kr:14})
\begin{eqnarray}\label{Eq:BesselAsy}
J_{\nu}(y) = \sqrt{\frac{2}{\pi y}}\cos(y-\omega_{\nu}) + O(y^{-3/2}), \quad  \omega_{\nu}:= (2\nu+1)\frac{\pi}{4}, \quad y\to\infty
\end{eqnarray}
yield
\begin{eqnarray*}
2\mathfrak{f}(y) 
\sim 2y \frac{2}{\pi y} \left[\cos\left(x-\frac{\pi}{4}\right)^2+\cos\left(x-\frac{3\pi}{4}\right)^2\right] = 2y \frac{2}{\pi y} \left[\cos\left(x-\frac{\pi}{4}\right)^2+\sin\left(x-\frac{\pi}{4}\right)^2\right]
=\frac{4}{\pi}
\end{eqnarray*}
as $y\to \infty$.
 Thus, by dominated convergence, we obtain 
\begin{eqnarray*}
\frac{1}{64\pi}\int_a^b du \left[v(J_0(v)^2+J_1(v)^2) \right]_{2\pi\sqrt{E}(u-b)}^{2\pi\sqrt{E}(u-a)} \to \frac{1}{64\pi}\int_a^b \frac{4}{\pi}du = \frac{1}{16\pi^2}(b-a)
\end{eqnarray*}
as $E\to\infty$.
For the remainder term in \eqref{int2}, we use the bound $|J_0(x)|\leq C_0x^{-1/2}, x\geq 0$ and $|J_1(x)| \leq 1$ to obtain  
\begin{eqnarray*}
\int_a^b du \left[J_0(v)J_1(v)\right]_{2\pi\sqrt{E}(u-b)}^{2\pi\sqrt{E}(u-a)}
&\leq& \int_{a}^b \left|J_0(2\pi\sqrt{E}(u-a))\right|
\left|J_1(2\pi\sqrt{E}(u-a))\right|\\
&&\quad+ \left|J_0(2\pi\sqrt{E}(b-u))\right|
\left|J_1(2\pi\sqrt{E}(b-u))\right| du \\
&\leq&c\int_{a}^b \frac{1}{E^{1/4}\sqrt{u-a}}+\frac{1}{E^{1/4}\sqrt{b-u}} \leq c\frac{1}{E^{1/4}}\sqrt{b-a}
\end{eqnarray*}
for some constant $c>0$. This proves that 
\begin{eqnarray*}\Cov{\phi_E(S_1)}{\phi_E(S_1)} = \frac{1}{16\pi^2}\frac{b-a}{\sqrt{E}} + o\left(\frac{1}{\sqrt{E}}\right).
\end{eqnarray*}

\medskip
\noindent
Let us now assume that $S_1\neq S_2$ but $S_1\cap S_2\neq \emptyset$.
Without loss of generality, assume that $0<a<c\leq b<d$, that is $S_1\cap S_2=[c,b]\times \{0\}$. 
Exploiting the linearity of $\phi_E$, we write
\begin{eqnarray*}
\phi_E(S_2)= \phi_E([c,b]\times \{0\})+
\phi_E([b,d]\times \{0\})
\end{eqnarray*}
and use   the previous observations to obtain
\begin{eqnarray*}
&&\Cov{\phi_E(S_1)}{\phi_E(S_2)}\\
&=& \Cov{\phi_E([a,c]\times \{0\})}{\phi_E([c,b]\times\{0\})}
+  \Cov{\phi_E([a,c]\times \{0\})}{\phi_E([b,d]\times\{0\})}\\
&&\quad+  \Cov{\phi_E([c,b]\times \{0\})}{\phi_E([c,b]\times\{0\})} 
+  \Cov{\phi_E([c,b]\times \{0\})}{\phi_E([b,d]\times\{0\})}\\
&=& \Cov{\phi_E([c,b]\times \{0\})}{\phi_E([c,b]\times\{0\})} + o\left(\frac{1}{\sqrt{E}}\right)\\
&=&  
\frac{ \lambda(S_1, S_2)}{16\pi^2\sqrt{E}}  + o\left(\frac{1}{\sqrt{E}}\right),
\end{eqnarray*}
which is \eqref{AsyCov}.

\medskip
\noindent\underline{\textit{Case (B):}}
We now treat the case where the line segments are not parallel. We will use the 
parametrizations \eqref{Seg1} and \eqref{Seg2} of $S_1$ and $S_2$ respectively with 
\begin{eqnarray*}
p=(0,0), \quad 
\theta \in [0,2\pi) \setminus \{0,\pi\}.
\end{eqnarray*}
 Moreover, in this case  
\begin{eqnarray}\label{AB}
\Cov{\phi_E(S_1)}{\phi_E(S_2)}
= A_E(\lambda_1,\lambda_2,\theta)+ B_E(\lambda_1,\lambda_2,\theta)
\end{eqnarray}
where $ A_E(\lambda_1,\lambda_2,\theta)$ and $ B_E(\lambda_1,\lambda_2,\theta)$ are given in \eqref{A} and \eqref{B}, respectively. We show that both the contributions of $A_E(\lambda_1,\lambda_2,\theta)$ and $B_E(\lambda_1,\lambda_2,\theta)$ to the covariance are of order $o(E^{-1/2})$ in the high-energy regime.
By \eqref{Rec}, we can write
\begin{eqnarray*}
A_E(\lambda_1,\lambda_2,\theta )= \frac{\cos\theta}{64}  \int_{0}^{\lambda_1} dt \int_{0}^{\lambda_2} ds 
J_0(\tau^E(t,s))
\left( J_0(\tau^E(t,s))+J_2(\tau^E(t,s))\right)
\end{eqnarray*}
where we recall $\tau^E(t,s)= 2\pi\sqrt{E} \sqrt{t^2+s^2-2st\cos\theta}$. Passing to polar coordinates  $(t,s)=(\rho \cos\phi,\rho\sin\phi)$, we have 
\begin{eqnarray}\label{Tau}
\tau^E(\rho\cos\phi,\rho\sin\phi)&=&2\pi\sqrt{E} \sqrt{\rho^2-2\rho^2\sin\phi\cos\phi\cos\theta}
=2\pi\sqrt{E} \rho\sqrt{1-\sin(2\phi)\cos\theta}\notag\\
&=:& \tilde{\tau}^E(\rho,\phi).
\end{eqnarray}
We note that $\tilde{\tau}^E(\rho,\phi)>0$  and $\cos\theta\neq 0$ for $\theta \in [0,2\pi)\setminus \{0,\pi\}$. 
Using polar coordinates $(\rho,\phi)$ on rectangle $[0,\lambda_1]\times [0,\lambda_2]$ and the fact that the line joining the origin and the point $(\lambda_1,\lambda_2)$ forms an angle of $\arctan(\lambda_2/\lambda_1)$ shows that the range of integration 
 is parametrized  according to
\begin{eqnarray}\label{split}
\int_{0}^{\lambda_1}dt\int_{0}^{\lambda_2}ds = \int_{0}^{\alpha_{1,2}} d\phi \int_{0}^{\lambda_1/\cos\phi}\rho d\rho
+ \int_{\alpha_{1,2}}^{\pi/2} d\phi \int_{0}^{\lambda_2/\sin\phi}\rho d\rho ,
\end{eqnarray}
where we set $\alpha_{1,2}:=\arctan(\lambda_2/\lambda_1) \in (0, \pi/2)$.
We split the integral 
\begin{eqnarray*}
&&A_E(\lambda_1,\lambda_2,\theta )
= \frac{\cos\theta}{64}  \int_{0}^{\lambda_1} dt \int_{0}^{\lambda_2} ds 
J_0(\tau^E(t,s))
\left( J_0(\tau^E(t,s))+J_2(\tau^E(t,s))\right) \\
&=& \frac{\cos\theta}{64} \int_{0}^{\alpha_{1,2}} d\phi \int_{0}^{\lambda_1/\cos\phi}\rho d\rho 
J_0(\tilde{\tau}^E(\rho,\phi))
\left( J_0(\tilde{\tau}^E(\rho,\phi))+J_2(\tilde{\tau}^E(\rho,\phi))\right)\\
&&+ \frac{\cos\theta}{64} \int_{\alpha_{1,2}}^{\pi/2} d\phi \int_{0}^{\lambda_2/\sin\phi}\rho d\rho
J_0(\tilde{\tau}^E(\rho,\phi))
\left( J_0(\tilde{\tau}^E(\rho,\phi))+J_2(\tilde{\tau}^E(\rho,\phi))\right)\\
&=:&  A_{E,1}(\lambda_1,\lambda_2,\theta ) + A_{E,2}(\lambda_1,\lambda_2,\theta ). 
\end{eqnarray*}
We focus on the term $A_{E,1}(\lambda_1,\lambda_2,\theta)$. 
For fixed $\phi\in (0,\alpha_{1,2})$, we perform the change of variable $ \psi=\tilde{\tau}^E(\rho,\phi)$ with $d\psi =   \tilde{\tau}^E(1,\phi)d\rho $, yielding 
\begin{eqnarray}\label{A1}
A_{E,1}(\lambda_1,\lambda_2,\theta )
&=&\frac{\cos\theta}{64} \int_{0}^{\alpha_{1,2}} \frac{d\phi}{(\tilde{\tau}^E(1,\phi))^2} \int_{0}^{\frac{\tilde{\tau}^E(1,\phi)\lambda_1}{\cos\phi}}  d\psi \psi
J_0(\psi)
\left( J_0(\psi)+J_2(\psi)\right)\notag\\
&=:&\frac{\cos\theta}{64} \int_{0}^{\alpha_{1,2}}  d\phi 
K^E(\phi;\lambda_1,\theta),
\end{eqnarray}
with 
\begin{gather}\label{KE}
 K^E(\phi;\lambda_1,\theta)
= \frac{1}{(\tilde{\tau}^E(1,\phi))^2} \int_{0}^{\frac{\tilde{\tau}^E(1,\phi)\lambda_1}{\cos\phi}}  d\psi \psi
J_0(\psi)
\left( J_0(\psi)+J_2(\psi)\right) \notag\\
= \frac{2}{(\tilde{\tau}^E(1,\phi))^2} \int_{0}^{\frac{\tilde{\tau}^E(1,\phi)\lambda_1}{\cos\phi}}  d\psi 
J_0(\psi)J_1(\psi) 
=\frac{2}{(\tilde{\tau}^E(1,\phi))^2} 
\left[ -\frac{J_0(\psi)^2}{2}\right]_0^{\frac{\tilde{\tau}^E(1,\phi)\lambda_1}{\cos\phi}}  ,
\end{gather}
where we used \eqref{Rec} and the fact that $\frac{d}{d\psi} J_0(\psi) =-J_1(\psi)$. 
Thus, it follows that (since $|J_0(x)|\leq 1$)
\begin{eqnarray*}
\left|K^E(\phi;\lambda_1,\theta) \right| \leq 
\frac{1}{(\tilde{\tau}^E(1,\phi))^2}\left| J_0\left(\frac{\tilde{\tau}^E(1,\phi)\lambda_1}{\cos\phi}\right)^2 - J_0(0)^2\right|
\leq \frac{2}{(\tilde{\tau}^E(1,\phi))^2},
\end{eqnarray*}
so that by \eqref{A1}
\begin{gather*}
\left| A_{E,1}(\lambda_1,\lambda_2,\theta ) \right| 
\leq \int_0^{\alpha_{1,2}} d\phi \frac{2}{(\tilde{\tau}^E(1,\phi))^2}
= \frac{2}{4\pi^2 E}\int_0^{\alpha_{1,2}}  \frac{d\phi}{1-\sin(2\phi)\cos(\theta)} = O(E^{-1})
\end{gather*} 
where the last upper bound is justified by the reverse triangular inequality $|x-y|\geq ||x|-|y||$, the assumption $|\cos\theta| \neq1$ and 
\begin{eqnarray}\label{Eq:reverse}
\int_0^{\alpha_{1,2}}
\frac{d\phi}{|1-\sin(2\phi)\cos\theta|}
\leq
\int_0^{\alpha_{1,2}}
\frac{d\phi}{1-|\sin(2\phi)||\cos\theta|}
\leq \frac{\alpha_{1,2}}{1-|\cos\theta|}<\infty.
\end{eqnarray}
Arguing similarly for the term $A_{E,2}(\lambda_1,\lambda_2,\theta)$, we obtain that $|A_{E,2}(\lambda_1,\lambda_2,\theta)|=O(E^{-1})$, so that $|A_{E}(\lambda_1,\lambda_2,\theta)|=O(E^{-1})$ as $E\to \infty$. We now treat the term $B_{E}(\lambda_1,\lambda_2,\theta)$.
From \eqref{B}, we have 
\begin{eqnarray*}
B_E(\lambda_1,\lambda_2,\theta )  := -\frac{\sin^2\theta}{32}\int_{0}^{\lambda_1} dt \int_{0}^{\lambda_2} ds 
\frac{ts\left(J_0(\tau^E(t,s))J_2(\tau^E(t,s))+J_1(\tau^E(t,s))^2\right)}{t^2+s^2-2st\cos\theta}  
\end{eqnarray*}
Passing to polar coordinates and using \eqref{split}, we write 
\begin{eqnarray*}
B_E(\lambda_1,\lambda_2,\theta ) =
B_{E,1}(\lambda_1,\lambda_2,\theta )
+B_{E,2}(\lambda_1,\lambda_2,\theta ),
\end{eqnarray*} 
where
\begin{eqnarray*}
&&B_{E,1}(\lambda_1,\lambda_2,\theta ) \\&&:= -\frac{\sin^2\theta}{32}\int_{0}^{\alpha_{1,2}} \frac{d\phi}{(\tilde{\tau}^E(1,\phi))^2} 
\frac{\sin\phi\cos\phi}{1-\sin(2\phi)\cos\theta}
\int_{0}^{\frac{\tilde{\tau}^E(1,\phi)\lambda_1}{\cos\phi}}  d\psi \psi
\left(J_0(\psi)J_2(\psi)+J_1(\psi)^2\right), 
\end{eqnarray*}
and 
\begin{eqnarray*}
&& B_{E,2}(\lambda_1,\lambda_2,\theta ) \\
&& := -\frac{\sin^2\theta}{32}\int_{\alpha_{1,2}}^{\pi/2} \frac{d\phi}{(\tilde{\tau}^E(1,\phi))^2} 
\frac{\sin\phi\cos\phi}{1-\sin(2\phi)\cos\theta}
\int_{0}^{\frac{\tilde{\tau}^E(1,\phi)\lambda_2}{\sin\phi}}  d\psi \psi
\left(J_0(\psi)J_2(\psi)+J_1(\psi)^2\right). 
\end{eqnarray*}
We treat the first term $B_{E,1}(\lambda_1,\lambda_2,\theta )$. 
We write 
\begin{eqnarray}\label{intM}
B_{E,1}(\lambda_1,\lambda_2,\theta ) 
:= -\frac{\sin^2\theta}{32}\int_{0}^{\alpha_{1,2}} 
d\phi M^E(\phi;\lambda_1,\theta),
\end{eqnarray}
where
\begin{eqnarray*}
M^E(\phi;\lambda_1,\theta)
:= \frac{1}{(\tilde{\tau}^E(1,\phi))^2} 
\frac{\sin\phi\cos\phi}{1-\sin(2\phi)\cos\theta}
\int_{0}^{\frac{\tilde{\tau}^E(1,\phi)\lambda_1}{\cos\phi}}  d\psi \psi
\left(J_0(\psi)J_2(\psi)+J_1(\psi)^2\right).
\end{eqnarray*}
Using the asymptotic expansion of Bessel functions in \eqref{Eq:BesselAsy} yields
\begin{eqnarray*}
\psi \left(J_0(\psi)J_2(\psi)+J_1(\psi)^2\right)
= \frac{2}{\pi} \cos\(2\psi+\frac{\pi}{2}\) + O(\psi^{-1})
=\frac{2}{\pi} \sin(2\psi)+O(\psi^{-1})
\end{eqnarray*}
as $\psi\to \infty$, so that for large $E$
\begin{eqnarray*}
&&\int_{0}^{\frac{\tilde{\tau}^E(1,\phi)\lambda_2}{\cos\phi}}  d\psi \psi 
\left(J_0(\psi)J_2(\psi)+J_1(\psi)^2\right)\\
&=& \int_0^1 
d\psi \psi \left(J_0(\psi)J_2(\psi)+J_1(\psi)^2\right)
+ \int_1^{\frac{\tilde{\tau}^E(1,\phi)\lambda_2}{\cos\phi}} d\psi \psi \left(J_0(\psi)J_2(\psi)+J_1(\psi)^2\right) \\
&=& O(1)+
\frac{2}{\pi}\int_{1}^{\frac{\tilde{\tau}^E(1,\phi)\lambda_2}{\cos\phi}} d\psi  \left( \sin(2\psi) +O(\psi^{-1})\right)\\
&=&  \frac{1}{\pi}  \cos\left(\frac{2\tilde{\tau}^E(1,\phi)\lambda_2}{\cos\phi}\right) + O(1)\left(1+\log\left(\frac{\tilde{\tau}^E(1,\phi)\lambda_2}{\cos\phi}\right)\right) \\
&=& O(1)\left(1+\log\left(\frac{\tilde{\tau}^E(1,\phi)\lambda_2}{\cos\phi}\right)\right).
\end{eqnarray*}
Therefore, we conclude by \eqref{intM}  
\begin{eqnarray*}
B_{E,1}(\lambda_1,\lambda_2,\theta)
&=& -\frac{\sin^2\theta}{32}\int_{0}^{\alpha_{1,2}} \frac{d\phi}{(\tilde{\tau}^E(1,\phi))^2} 
\frac{\sin\phi\cos\phi}{1-\sin(2\phi)\cos\theta}
\{O(1)\left(1+\log\left(\frac{\tilde{\tau}^E(1,\phi)\lambda_2}{\cos\phi}\right)\right)\} \\
&=& -\frac{\sin^2\theta}{32\pi} \frac{O(1)}{4\pi^2E} 
\int_0^{\alpha_{1,2}} 
d\phi \frac{\sin\phi\cos\phi}{(1-\sin(2\phi)\cos\theta)^2} \\
&&  -\frac{\sin^2\theta}{32\pi} \frac{O(1)}{4\pi^2E} 
\int_0^{\alpha_{1,2}} 
d\phi \frac{\sin\phi\cos\phi}{(1-\sin(2\phi)\cos\theta)^2}\log\left( \tilde{\tau}^E(1,\phi)\lambda_2\right)\\
&&+\frac{\sin^2\theta}{32\pi} \frac{O(1)}{4\pi^2E} 
\int_0^{\alpha_{1,2}} 
d\phi \frac{\sin\phi\cos\phi}{(1-\sin(2\phi)\cos\theta)^2}\log\left( \cos\phi\right)\\
&=:&   b_E^1+b_E^2+b_E^3.
\end{eqnarray*}
Clearly we have $|b_E^1|=O(E^{-1})$ since (arguing similarly as in \eqref{Eq:reverse})
\begin{gather*}
\int_0^{\alpha_{1,2}} 
d\phi \left|\frac{\sin\phi\cos\phi}{(1-\sin(2\phi)\cos\theta)^2}\right| \leq \int_0^{\alpha_{1,2}} 
 \frac{d\phi}{(1-|\cos\theta|)^2} 	= \frac{\alpha_{1,2}}{(1-|\cos\theta|)^2} <\infty
\end{gather*}
since  $|\cos\theta|\neq 1$. Let us now consider the term $b_E^2$. Using \eqref{Tau}, we write  
\begin{eqnarray*}
\log\left(\tilde{\tau}^E(1,\phi)\lambda_2\right)
&=& \log\left(2\pi\lambda_2\sqrt{E}\sqrt{1-\sin(2\phi)\cos\theta}\right) \\
&=&2^{-1}\log E+\log\left(2\pi\lambda_2\sqrt{1-\sin(2\phi)\cos\theta}\right) = O(\log E)+O(1),
\end{eqnarray*}
where we used the fact that the map $\phi\mapsto \log\left(2\pi\lambda_2\sqrt{1-\sin(2\phi)\cos\theta}\right)$ is bounded.
 Thus, arguing as above shows that 
$|b_E^2| = O(E^{-1}+\log(E)/E)= O(\log(E)/E)$.
For the term $b_E^3$, we show that $|b_E^3|=O(E^{-1})$. Indeed, since $ \alpha_{1,2}=\arctan(\lambda_2/\lambda_1)\leq \pi/2$, we have 
\begin{eqnarray*}
\int_0^{\alpha_{1,2}} 
d\phi \left|\frac{\sin\phi\cos\phi}{(1-\sin(2\phi)\cos\theta)^2}\log\left( \cos\phi\right) \right|\leq
\frac{1}{(1-|\cos\theta|)^2}   
\int_0^{\pi/2} d\phi |\log(\cos \phi)| <\infty
\end{eqnarray*}
since it is straightforward to check that
\begin{eqnarray*}
\int_0^{\pi/2} d\phi |\log(\cos \phi)| = \frac{\pi \log2}{2}.
\end{eqnarray*}
Indeed, the last integral is obtained as follows: 
Since $\log(\cos\phi))\leq0$ on $(0,\pi/2)$, we have 
\begin{eqnarray*}
\int_0^{\pi/2} d\phi |\log(\cos \phi)|=
-\int_0^{\pi/2} d\phi \log(\cos \phi)=:-I
\end{eqnarray*}
By changing variable $u=\pi/2-\phi$, we have that 
$I= \int_0^{\pi/2} d\phi \log(\sin \phi) $ and also by symmetry $I= \int_{\pi/2}^{\pi} d\phi \log(\sin \phi)$. Therefore, we have 
\begin{eqnarray*}
2I &=& \int_{0}^{\pi/2} d\phi \log(\cos\phi\sin\phi)
= \int_0^{\pi/2} d\phi \left(\log(\sin2\phi)-\log2\right)\\
&=& \frac{1}{2}\int_0^{\pi}\log(\sin\phi)d\phi - \frac{\pi\log2}{2}
= \frac{1}{2}\times 2I -\frac{\pi\log2}{2} = I-\frac{\pi\log2}{2},
\end{eqnarray*}
so that $I= -\frac{\pi\log2}{2}$ as desired.  Combining the contributions of each of the terms $b_E^j,j=1,2,3$, we conclude that $B_{E,1}(\lambda_1,\lambda_2,\theta) = O(E^{-1}\log E)$. 
The analysis for $B_{E,2}(\lambda_1,\lambda_2,\theta)$ is done analogously, so that $B_{E,2}(\lambda_1,\lambda_2,\theta)=O(E^{-1}\log
E)$. We conclude from \eqref{AB} that, as $E\to\infty$,
\begin{gather*}
\Cov{\phi_E(S_1)}{\phi_E(S_2)}
= A_E(\lambda_1,\lambda_2,\theta)+ B_E(\lambda_1,\lambda_2,\theta) = O\left(\frac{\log E}{E}\right) = o\left(\frac{1}{\sqrt{E}}\right),
\end{gather*}
which proves the statement. This concludes the proof.
\end{proof}
We now prove the following multivariate Central Limit Theorem for the normalized random variables $\wt{\phi}_E(S)=4\pi E^{1/4}\phi_E(S)$. 
\begin{Prop}[Multidimensional CLT for line segments]\label{PropCLTLS}
For every integer $d\geq 1$ and every line segments $S_1,\ldots, S_d$, we have that, as $E\to \infty$, 
\begin{eqnarray*}
\( \wt{\phi}_{E}(S_1), \ldots, \wt{\phi}_{E}(S_d)\) 
\Law \mathcal{N}_d(0, \Sigma),
\end{eqnarray*}
where $\Sigma=\{\Sigma(i,j):i,j=1,\ldots,d\}$ is the $d\times d$ matrix defined by
\begin{eqnarray*}
\Sigma(i,j) :=  \lambda(S_i, S_j) \ , \quad i,j=1,\ldots, d,
\end{eqnarray*}
where $\lambda(S_i, S_j)$ is the signed length of $S_i\cap S_j$.
\end{Prop}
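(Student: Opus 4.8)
The plan is to derive the joint convergence from the multivariate Fourth Moment Theorem of Peccati and Tudor (\cite[Theorem 6.2.3]{NP:12}), which in the present setting reduces the statement to three facts: that all coordinates live in a common fixed Wiener chaos, that the covariance matrix converges to $\Sigma$, and that each marginal is asymptotically Gaussian. The first two are already in hand. Indeed, by \eqref{kernel} and Remark \ref{r:chains}-(c), each $\phi_E(S_i)$ --- and hence $\wt{\phi}_E(S_i)=4\pi E^{1/4}\phi_E(S_i)=I_2\big(4\pi E^{1/4}u^E(S_i)\big)$ --- belongs to the second Wiener chaos of $B_E$, so that every coordinate sits in the chaos of order $q=2$. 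Moreover, by Proposition \ref{PropCovSeg} together with the normalization \eqref{normal},
\begin{equation*}
\E{\wt{\phi}_E(S_i)\wt{\phi}_E(S_j)}=16\pi^2\sqrt{E}\,\Cov{\phi_E(S_i)}{\phi_E(S_j)}\longrightarrow \lambda(S_i,S_j)=\Sigma(i,j),\qquad E\to\infty.
\end{equation*}
Since all coordinates belong to a fixed chaos and the covariances converge, \cite[Theorem 6.2.3]{NP:12} guarantees that the joint convergence $(\wt{\phi}_E(S_1),\ldots,\wt{\phi}_E(S_d))\Law\mathcal{N}_d(0,\Sigma)$ is \emph{equivalent} to the componentwise convergence $\wt{\phi}_E(S_i)\Law\mathcal{N}(0,\Sigma(i,i))$. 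It therefore suffices to prove the one-dimensional CLT for a single segment.

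Fix a segment $S$ and set $\sigma^2:={\rm length}(S)$, so that $\V{\wt{\phi}_E(S)}\to\sigma^2>0$ by Proposition \ref{PropCovSeg}. Writing $\wt{\phi}_E(S)=I_2(g_E)$ with $g_E:=4\pi E^{1/4}u^E(S)$, the Fourth Moment Theorem \cite[Theorem 5.2.7]{NP:12} reduces the marginal CLT to showing that the fourth cumulant $\kappa_4(\wt{\phi}_E(S))$ tends to $0$. By the second-chaos identity $\kappa_4(I_2(g))=48\,\|g\otimes_1 g\|^2$, this is equivalent to $\|g_E\otimes_1 g_E\|\to 0$, and, since $\kappa_4(\wt{\phi}_E(S))=(4\pi)^4 E\,\kappa_4(\phi_E(S))$, also to the unnormalized bound
\begin{equation*}
\kappa_4(\phi_E(S))=o\!\left(\frac1E\right),\qquad E\to\infty.
\end{equation*}

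Establishing this last estimate is the heart of the matter, and the step I expect to be the main obstacle. Recalling from the derivation of \eqref{CovA} that $\phi_E(S)=\tfrac18\int_S B_E(x)\scal{\wt{\nabla}B_E(x)}{\n_S}\,\mathcal{H}^1(dx)$ in terms of the normalized derivatives, one has by multilinearity of cumulants
\begin{equation*}
\kappa_4(\phi_E(S))=\frac{1}{8^4}\int_{S^4}{\rm cum}\big(\widetilde{G}(x_1),\ldots,\widetilde{G}(x_4)\big)\prod_{i=1}^4\mathcal{H}^1(dx_i),\qquad \widetilde{G}(x):=B_E(x)\scal{\wt{\nabla}B_E(x)}{\n_S}.
\end{equation*}
By the diagram formula for cumulants of products of jointly Gaussian variables, only connected pairings contribute; as each point $x_i$ carries exactly two legs (one from $B_E$, one from $\wt{\nabla}B_E$, which are uncorrelated at coinciding points), the surviving diagrams are exactly the $4$-cycles, and the integrand is a finite sum of products of four covariance kernels of the types $r^E,\tilde{r}^E_{0,j},\tilde{r}^E_{i,j}$ joining the four points cyclically. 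Parametrizing $S$ by arclength and performing the rescaling $s_i=2\pi\sqrt{E}\,t_i$ exactly as in the proof of Proposition \ref{PropCovSeg}, each factor becomes an $O(1)$ combination of Bessel functions evaluated at $|s_i-s_j|$.

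The delicate feature --- already visible in the variance computation, where the leading term did not come from the pointwise size of $J_0(v)J_1(v)/v$ but from the slowly varying primitive $v\big(J_0(v)^2+J_1(v)^2\big)$ via boundary contributions --- is that the crude decay $|J_\nu(w)|=O(w^{-1/2})$ applied to the four factors of the cycle only yields $\kappa_4(\phi_E(S))=O(E^{-1})$, which is not sufficient. One must instead exploit the genuine oscillation of the Bessel factors, either through the recurrence-type cancellations (such as $J_0+J_2=2J_1(\cdot)/(\cdot)$ used in \eqref{Rec}) that arise upon summing over the derivative indices around the cycle, or through repeated integration by parts in the three independent gap variables, to show that the non-oscillatory contributions cancel and the true order is $o(E^{-1})$. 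Granting this, $\kappa_4(\wt{\phi}_E(S))\to 0$ for every segment, each marginal CLT follows from \cite[Theorem 5.2.7]{NP:12}, and \cite[Theorem 6.2.3]{NP:12} upgrades these to the joint limit $\mathcal{N}_d(0,\Sigma)$ with $\Sigma(i,j)=\lambda(S_i,S_j)$.
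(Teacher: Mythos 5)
Your reduction steps reproduce the paper's own skeleton exactly: each $\wt{\phi}_E(S_i)$ lives in the second Wiener chaos, Proposition \ref{PropCovSeg} gives convergence of the covariances, \cite[Theorem 6.2.3]{NP:12} reduces the claim to the case $d=1$, and the marginal CLT is reduced via the Fourth Moment Theorem to showing that the fourth cumulant (equivalently $\norm{g_E\otimes_1 g_E}$) vanishes. The problem is that you stop exactly at the step you yourself call ``the heart of the matter'': the estimate $\kappa_4(\wt{\phi}_E(S))\to 0$ is never established, only ``granted'', so the proof is incomplete where it counts.

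Moreover, your diagnosis of why that estimate is delicate is off, and the route you sketch (exploiting oscillation of the Bessel factors, integration by parts in the gap variables) is not needed. The paper first uses invariance under rigid motions to take $S=[0,L]\times\{0\}$ with normal $e_2$; along such a segment the cross-covariance $\tilde{r}^E_{0,2}(u,0)$ vanishes identically (the derivative is transverse to the separation), so every surviving connected $4$-cycle contains exactly two factors $r^E$ and two factors $\tilde{r}^E_{2,2}(\cdot,0)=2J_1(2\pi\sqrt{E}|\cdot|)/(2\pi\sqrt{E}|\cdot|)$, as in \eqref{integral}. The latter kernel decays like $(\sqrt{E}|u|)^{-3/2}$, not $(\sqrt{E}|u|)^{-1/2}$, so its one-dimensional integral over $[-L,L]$ is $O(E^{-1/2})$ while that of $|r^E|$ is $O(E^{-1/4})$; bounding one $r^E$ factor by $1$ and integrating the remaining three in the gap variables gives $O(E^{-1/2}\cdot E^{-1/2}\cdot E^{-1/4})=O(E^{-5/4})$ before normalization, hence $\kappa_4(\wt{\phi}_E(S))=O(E\cdot E^{-5/4})=O(E^{-1/4})\to 0$ by purely absolute-value bounds. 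Your claimed obstruction --- that crude bounds only give $\kappa_4(\phi_E(S))=O(E^{-1})$ --- arises from discarding this structure and assigning the generic $|J_\nu(w)|=O(w^{-1/2})$ decay to all four edges of the cycle; once the actual kernels are identified (which is precisely what the reduction to a horizontal segment and the identity \eqref{Rec} buy you), no cancellation of oscillatory contributions is required. To complete the argument you would need to carry out this identification and the resulting elementary integral estimates, which is what the paper does.
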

\begin{proof}
Using the fact that for every line segment $S$, $\wt{\phi}_E(S)$ is an element of the second Wiener chaos and we proved  that the covariances $\E{\wt{\phi}_E(S_1)\wt{\phi}_E(S_2)}$ converge to $\lambda(S_i,S_j)$ as $E\to \infty$ (see Proposition \ref{PropCovSeg}), it is sufficient to prove the statement for $d=1$, since in view of \cite[Theorem 6.2.3]{NP:12}, joint convergence is equivalent to marginal convergence for chaotic sequences. 
By invariance under rigid motions of the plane of  Berry's random wave model, we can assume without loss of generality, that $S_1=[0,L]\times \{0\}$ for  $L>0$. Using the fact that $\n_{S_1}=e_1$, we have
 \begin{gather}\label{I2}
\wt{\phi}_{E}(S_1)
= \frac{\pi E^{1/4}}{2} \int_0^L  
 B_E(x,0) \tilde{\partial}_2B_E(x,0) dx.
\end{gather}
We now represent $\wt{\phi}_{E}(S_1)$ as a multiple integral of order $2$ with respect to an isonormal Gaussian process on the Hilbert space $L^2([0,1], \lambda)$, where $\lambda$ denotes Lebesgue measure (see Remark \ref{r:chains} \textbf{(c)}).
For $(x,0) \in \R^2$, let $f_0^E(x,\cdot),f_2^E(x,\cdot): [0,1]\to \R$ be such that 
\begin{gather*}
B_E(x,0) = I_1(f_0^E(x,\cdot) ), \quad 
\tilde{\partial}_2B_E(x) = I_1(f_2^E(x,\cdot)),
\end{gather*}
where $I_{p}$ denotes the Wiener-It\^{o} isometry of order $p$.
Using the product formula for multiple integrals (see \cite[Theorem 2.7.10]{NP:12}) and independence, we can write
\begin{eqnarray*}
&&B_E(x,0) \tilde{\partial}_2 B_E(x,0)\\
&&= I_2\left(f_0^E(x,\cdot) \wt{\otimes} f_2^E(x,\cdot)\right)
+ I_0\left(f_0^E(x,\cdot) \wt{\otimes}_1 f_2^E(x,\cdot)\right)
= I_2\left(f_0^E(x,\cdot) \wt{\otimes} f_2^E(x,\cdot)\right),
\end{eqnarray*}
where the symbols $\otimes_r$ and $\wt{\otimes}_r$ denote the contraction operator of order $r$ and its symmetrization respectively (see \cite[Appendix B]{NP:12}. In particular, for $r=0$ and $r=1$, these are  given by (writing $\lambda(du)=du$)
\begin{eqnarray*}
f_0^E(x,\cdot) {\otimes}_0 f_2^E(x,\cdot) &=& f_0^E(x,\cdot) {\otimes} f_2^E(x,\cdot), \\
f_0^E(x,\cdot) {\otimes}_1 f_2^E(x,\cdot)
&=& \int_{0}^{1} f_0^E(x,u)f_2^E(x,u) du = \scal{f_0^E(x,\cdot)}{f_2^E(x,\cdot)}_{L^2([0,1], \lambda)} = 
0,
\end{eqnarray*}
where the last identity follows from the isometry property for Wiener integrals  and independence. It follows from \eqref{I2} that, 
\begin{gather*}
\wt{\phi}_{E}(S_1) 
= \frac{\pi E^{1/4}}{2} \int_0^L   I_2\left(f_0^E(x,\cdot) \wt{\otimes} f_2^E(x,\cdot)\right) dx  =: I_2\left(k^E\right), 
\end{gather*}
where ($\mathrm{Sym}$ denotes the symmetrization operator)
\begin{eqnarray}\label{kj}
k^E(u,v) &=& \frac{\pi E^{1/4}}{2} \mathrm{Sym}\{ \int_0^L   f_0^E(x,u)  f_2^E(x,v) dx\} \notag\\
&=& \frac{\pi E^{1/4}}{4} \{\int_0^L  f_0^E(x,u)  f_2^E(x,v) dx
+ \int_{0}^L   f_0^E(x,v)  f_2^E(x,u) dx \}.
\end{eqnarray}
 In order to show that $\wt{\phi}_{E}(S_1)$ satisfies a CLT as $E \to \infty$, it suffices to show that $\norm{k^E \otimes_1 k^E}_{L^2([0,1]^2, {\lambda}^{\otimes 2})}$ converges to zero as $E\to \infty$, in view of the Fourth Moment Theorem \cite[Theorem 5.2.7]{NP:12}.  
From \eqref{kj} it follows that 
\begin{eqnarray*}
&&  k^E \otimes_1  k^E(u,v) = \int_{0}^{1} dz k^E(u,z)k^E(v,z) \\
&=& \frac{\pi^2\sqrt{E}}{16} \int_{0}^{1} dz   \{\int_0^L  f_0^E(x,u)  f_2^E(x,z) dx
+ \int_0^L   f_0^E(x,z)  f_2^E(x,u) dx \}\\
&&\hspace{2cm}\times   \{\int_0^L  f_0^E(y,v)  f_2^E(y,z) dy
+ \int_0^L   f_0^E(y,z)  f_2^E(y,v) dy \},
\end{eqnarray*}
that is, after expanding, a sum of four terms, among which one of them   (ignoring multiplicative constants that are independent of $E$) is 
\begin{eqnarray*}
(u,v) &\mapsto& 
     \sqrt{E}\int_0^L dx \int_0^L dy   f_0^E(x,u)  f_0^E(y,v) \int_0^{1} dzf_2^E(x,z)     f_2^E(y,z)  \\
&=&   \sqrt{E} \int_0^L dx \int_0^L dy   f_0^E(x,u)  f_0^E(y,v) \E{\tilde{\partial}_2B_E(x,0) \tilde{\partial}_2 B_E(y,0)}\\
&=&\sqrt{E} \int_0^L dx \int_0^L dy   f_0^E(x,u)  f_0^E(y,v) \tilde{r}_{2,2}^E(x-y,0).
\end{eqnarray*}
From this, we compute the squared norm
\begin{eqnarray*}
\norm{ k ^E \otimes_1  k^E}_{L^2([0,1]^2, {\lambda}^{\otimes 2})}^2
= \int_{0}^{1} du \int_{0}^{1}dv \left[ k^E \otimes_1  k^E(u,v)\right]^2,
\end{eqnarray*}
which is given by a sum of $16$ terms that have all the same behaviour.  We will expose the details for one of them (which is representative of the difficulty), the others can be treated similarly. Exploiting once more the isometry property of Wiener integrals, one among them (corresponding to the computation above) is given by 
\begin{eqnarray}\label{integral}
E \int_{[0,L]^4}  dx_1\ldots dx_4 \  
\tilde{r}_{2,2}^E(x_1-x_2,0) 
   \tilde{r}_{2,2}^E(x_2-x_3,0) r^E(x_3-x_4,0) r^E(x_4-x_1,0)   .
\end{eqnarray}
We now show that the integral in \eqref{integral} converges to zero as $E\to \infty$. Performing the change of variables $(u_1,u_2,u_3,u_4) = (x_1-x_2,x_2-x_3,x_3-x_4,x_4)$ 
yields that the integral in \eqref{integral} is equal to 
\begin{eqnarray}
&& E \int_{0}^L  du_4  \int_{-u_4}^{L-u_4} du_3 
\int_{-(u_3+u_4)}^{L-(u_3+u_4)} du_2  
\int_{-(u_2+u_3+u_4)}^{L-(u_2+u_3+u_4)} du_1 \
\tilde{r}_{2,2}^E(u_1,0) \tilde{r}_{2,2}^E(u_2,0) r^E(u_3,0) \notag\\
&&\hspace{10cm} \cdot r^E(-u_1-u_2-u_3,0) \notag\\
&\leq& E L     \int_{-L}^{L} du_3 
\int_{-2L}^{2L} du_2  
\int_{-4L}^{4L}du_1 \
\left| \tilde{r}_{2,2}^E(u_3,0) \tilde{r}_{2,2}^E(u_2,0)   r^E(u_1,0)\right|, \label{Eq:intU}
\end{eqnarray}
where in the second line, we used the fact that  $|r^E(\cdot)| \leq 1$ and uniformly bounded the regions of integrations. 
Now using  \cite[Lemma 3.1]{NPR:19} and the  relation \eqref{Rec} yields
\begin{eqnarray*}
\tilde{r}_{2,2}^E(u_3,0) = J_0(2\pi\sqrt{E}|u_3|)+
J_2(2\pi\sqrt{E}|u_3|) = 2\frac{J_1(2\pi\sqrt{E}|u_3|)}{2\pi\sqrt{E}|u_3|}, 
\end{eqnarray*}
so that  changing  variable $v=2\pi\sqrt{E}u_3$,
\begin{eqnarray*}
\int_{-L}^{L} du_3 |\tilde{r}_{2,2}^E(u_3,0)| 
= \frac{1}{2\pi\sqrt{E}}
\int_{-2\pi\sqrt{E}L}^{2\pi\sqrt{E}L}2\frac{|J_1(|v|)|}{|v|} dv.
\end{eqnarray*}
Splitting the region of integrations and using that $|\tilde{r}^1_{2,2}(\cdot)|\leq 2$ yields
\begin{eqnarray*}
\frac{1}{2\pi\sqrt{E}L}
\int_{-2\pi\sqrt{E}L}^{2\pi\sqrt{E}}2\frac{|J_1(|v|)|}{|v|} dv
= \frac{1}{2\pi\sqrt{E}}
\int_{-1}^{1}2\frac{|J_1(|v|)|}{|v|} dv
+ \frac{2}{2\pi\sqrt{E}}\int_{1}^{2\pi\sqrt{E}L}2\frac{|J_1(v)|}{v} dv.
\end{eqnarray*}
The first term is $O(E^{-1/2})$. For the second term, we use the bound $|J_{1}(v)|\leq v^{-1/2}$, to obtain 
\begin{eqnarray*}
\frac{2}{2\pi\sqrt{E}}\int_{1}^{2\pi\sqrt{E}L}2\frac{|J_1(v)|}{v} dv
\leq \frac{2}{2\pi\sqrt{E}}\int_{1}^{2\pi\sqrt{E}L}\frac{1}{v^{3/2}} = O(E^{-1/2}). 
\end{eqnarray*}
For the integration with respect to $u_1$ in  \eqref{Eq:intU}, we have that 
\begin{eqnarray*}
\int_{-4L}^{4L} |r^E(u_1,0)| du_1 
&=& \frac{1}{2\pi\sqrt{E}}\(O(1)
+ 2\int_{1}^{8\pi\sqrt{E}L} |J_0(v)| dv\)\\
&\leq& 
\frac{1}{2\pi\sqrt{E}}\(O(1)
+ 2\int_{1}^{8\pi\sqrt{E}L} \frac{1}{\sqrt{v}}  dv\)
= O(E^{-1/4}).
\end{eqnarray*}
From this, we deduce that 
the integral in \eqref{integral} is $O(E\cdot E^{-1/2}E^{-1/2}E^{-1/4}) = O(E^{-1/4})$, which suffices.

\end{proof}

We now conclude the proofs of Theorem \ref{AsyVar} and Theorem \ref{CLTd}. 
\begin{proof}[Proof of Theorem \ref{AsyVar}]
Assume that $\CC_1,\CC_2\in \mathscr{C}$ are given by $\CC_1= (\CC_1, ... \CC_{r_1})$    
and $\CC_2=( T_1,... T_{r_2})$ for segments $S_1,\ldots,S_{r_1},T_1,\ldots, T_{r_2}$. Then, 
 in view of the covariance structure for line segments proved in Proposition \ref{PropCovSeg}, it follows that, as $E\to \infty$,
\begin{eqnarray*}
&&\Cov{\phi_E(\CC_1)}{\phi_E(\CC_2)}
= \sum_{j=1}^{r_1}\sum_{k=1}^{r_2} 
\Cov{\phi_E(S_j)}{\phi_E(T_k)}\\
&&=  \sum_{j=1}^{r_1}\sum_{k=1}^{r_2} 
\frac{\lambda(S_j,T_k)}{16\pi^2\sqrt{E}}
+ o\(\frac{1}{\sqrt{E}}\)
= \frac{\lambda(\CC_1,\CC_2)}{16\pi^2\sqrt{E}}+o\(\frac{1}{\sqrt{E}}\),
\end{eqnarray*}
 where we used the definition of $\lambda(\cdot,\cdot)$. The variance estimate follows after setting $\CC_1=\CC_2$ above. \end{proof}
\begin{proof}[Proof of Theorem \ref{CLTd}]
Thanks to the estimate in \eqref{Eq:AsyCov} and \cite[Theorem 6.2.3]{NP:12}, it suffices to prove the statement for  $d=1$.  
 Write $\CC= (S_1,...,S_r)$ for line segments $S_1,\ldots, S_r$. The definition of $\phi_E$ and the fact that the random vector $(\wt{\phi}_{E}(S_1),\ldots, \wt{\phi}_E(S_r))$ converges in distribution to a Gaussian  vector with covariance matrix 
 $\Sigma(i,j)=\lambda(S_i,S_j), i,j=1,\ldots,r$ (in view of Proposition \ref{PropCLTLS}) imply that $\wt{\phi}_E(\CC)$ converges in distribution to a Gaussian random variable with  variance $\lambda(\CC,\CC)$. 
\end{proof}

\subsection{Proof of Corollary \ref{Cor:SecondFCLT}}\label{Sec:SecondFCLT}
In view of the variance estimate in Theorem \ref{AsyVar}, and taking into account the normalization in the definition of $X_E$ (see \eqref{Eq:XE}), we deduce that the finite-dimensional distributions of $X_E[2]$ converge to zero. We are thus left to show that the laws of $\{X_E[2]:E>0\}$ are tight, which is the content of the following proposition. 

\begin{Prop}\label{PropTight2}
The laws of the random functions $\{X_E[2]:E>0\}$ are tight in $\mathbf{D}_2$.
\end{Prop}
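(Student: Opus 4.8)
The plan is to derive tightness from the Davydov--Zitikis criterion \cite{DZ:08} for weak convergence in $\mathbf{D}([0,1]^2)$, reducing every required estimate to the variance asymptotics of Theorem \ref{AsyVar} and to supremum bounds for the stationary field $b$. As a preliminary observation, writing $X_E[2](\bb{t})=\sqrt{512\pi/\log E}\,\mathscr{L}_E[2](\D_{\bb{t}})$ and recalling from Remark \ref{r:chains}-(a) that $\mathscr{L}_E[2](\D_{\bb{t}})=\phi_E(\partial\D_{\bb{t}})$, the additivity of $\mathscr{L}_E[2]$ shows that the rectangular increment of $X_E[2]$ over a block $Q=(s_1,t_1]\times(s_2,t_2]$ equals $\sqrt{512\pi/\log E}\,\phi_E(\partial Q)$. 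Theorem \ref{AsyVar} then gives $\V{X_E[2](Q)}\asymp(\sqrt{E}\log E)^{-1}\,\mathrm{length}(\partial Q)$; in particular each one-point variance is $O((\sqrt{E}\log E)^{-1})\to 0$, which together with Theorem \ref{Thm:C2} (controlling the companion process $Y_E=4\pi E^{1/4}\mathscr{L}_E[2](\D_{\bullet})$, of which $X_E[2]$ is a vanishing multiple) already yields convergence of the finite-dimensional distributions of $X_E[2]$ to zero, so that only tightness remains.

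The Davydov--Zitikis criterion reduces tightness to a control, uniform in $E$, of the oscillation of $X_E[2]$ over the blocks of the dyadic partitions $\Pi_K$, together with the behaviour within each block. The two tools I would use are hypercontractivity on the second Wiener chaos, which upgrades the $L^2$ estimate of Theorem \ref{AsyVar} to $\E{|\phi_E(\CC)|^{2p}}\le c_p\,\V{\phi_E(\CC)}^p$ for every $p\ge 1$, and the moment estimates for suprema of stationary Gaussian fields of Proposition \ref{MomChapter}. For a fixed dyadic scale the blocks of $\Pi_K$ have perimeter $\asymp 2^{-K}$, so $\E{|X_E[2](Q)|^{2p}}\le c_p\,(C\,2^{-K}(\sqrt{E}\log E)^{-1})^p$; a union bound over the $4^K$ blocks estimates $\Prob\{\max_Q|X_E[2](Q)|>\delta\}$ by a multiple of $2^{K(2-p)}(\sqrt{E}\log E)^{-p}\delta^{-2p}$, which for $p\ge 3$ tends to $0$ both as $K\to\infty$ and as $E\to\infty$.

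The delicate point, and where Proposition \ref{MomChapter} is indispensable, is the oscillation \emph{inside} the blocks, at scales comparable to or below the correlation length $1/\sqrt{E}$ of $B_E$. Here I would pass to the stationary picture via Remark \ref{r:chains}-(b): with $R=2\pi\sqrt{E}$ one has $\sup_{\bb{t}}|\phi_E(\partial\D_{\bb{t}})|\eqLaw R^{-1}\sup|\mathscr{L}(b;\mathrm{rect})[2]|$, the supremum ranging over sub-rectangles $[0,a]\times[0,b]\subseteq[0,R]^2$. Decomposing each boundary functional into its four one-dimensional pieces $\int b\,\langle\nabla b,\n\rangle$ along horizontal and vertical segments turns them into line functionals of the stationary field $b$ and its gradient, whose suprema over $[0,R]^2$ Proposition \ref{MomChapter} bounds by a power of $\sqrt{\log R}\asymp\sqrt{\log E}$ times the standard deviation $\asymp\sqrt{R}$ provided by Proposition \ref{p:largedomain}. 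Combining these should yield $\E{\sup_{\bb{t}}|X_E[2](\bb{t})|}\lesssim\sqrt{1/\log E}\cdot R^{-1}\cdot\sqrt{R}\,\sqrt{\log E}\asymp R^{-1/2}\asymp E^{-1/4}\to 0$, which verifies the criterion.

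The main obstacle is precisely the hyperuniform, total-disorder nature exhibited in Theorem \ref{Thm:C2}: the increment variance over a block scales like the \emph{perimeter} rather than the \emph{area}, so it does not vanish as the block shrinks, and the limit possesses no nondegenerate modulus of continuity. Consequently the classical area-based tightness criteria (e.g.\ Bickel--Wichura) are unavailable, and tightness is produced entirely by the normalization $1/\log E$: the $\sqrt{\log E}$ cost of taking a supremum over the $\asymp E$ decorrelated rectangles of $[0,R]^2$ is exactly offset by the $1/\sqrt{\log E}$ factor, leaving the residual decay $E^{-1/4}$. Making this cancellation quantitative, uniformly across all scales down to $1/\sqrt{E}$, is the heart of the matter and is what forces the joint use of the Davydov--Zitikis criterion, hypercontractivity, and the stationary-field supremum estimates of Proposition \ref{MomChapter}.
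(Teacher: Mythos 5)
Your high-level strategy coincides with the paper's: the Davydov--Zitikis criterion, hypercontractivity on the second chaos, and the Gaussian supremum moment bounds of Proposition \ref{MomChapter}. However, both of your two estimates deviate from what the criterion actually requires, and the second one has a genuine gap. For the large-scale part, condition (b) of Theorem \ref{Thm:DZ1Ch} asks for a Kolmogorov-type bound on $\E{|X_E[2](\bb{t})-X_E[2](\bb{s})|^{\alpha}}$ for \emph{all pairs} with $\norm{\bb{t}-\bb{s}}\geq a_E$, not for a union bound over the $4^K$ rectangular block increments $\phi_E(\partial Q)$ of a dyadic partition; your block computation controls a different quantity and would still need a chaining step to reach arbitrary increments. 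The paper gets condition (b) almost for free: since each one-point variance is $O((\sqrt{E}\log E)^{-1})=O(a_E)$ with $a_E=(\sqrt{E}\log E)^{-1}$, the crude bound $\E{(X_E[2](\bb{t})-X_E[2](\bb{s}))^2}\leq 2\V{X_E[2](\bb{t})}+2\V{X_E[2](\bb{s})}\leq c\,a_E\leq c\norm{\bb{t}-\bb{s}}$ holds whenever $\norm{\bb{t}-\bb{s}}\geq a_E$, and hypercontractivity then gives the exponent $\beta=p/2>2$.

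The serious gap is in your treatment of the oscillation at scales below $a_E$. You invoke Proposition \ref{MomChapter} to bound ``suprema of line functionals $\int b\,\langle\nabla b,\n\rangle$'', but that proposition only bounds $\E{\norm{G}_{C^j(\D)}^p}$ for a \emph{stationary Gaussian} field $G$; it says nothing about the supremum over $\bb{t}$ of the non-Gaussian (second-chaos) family of boundary integrals $\phi_E(\partial\D_{\bb{t}})$, and precisely because of the total-disorder structure these increments do not vanish as $\bb{t}\to\bb{s}$, so no entropy or continuity argument applies to them directly. The ingredient you are missing is the divergence-form identity from \cite[Eq.\ (4.58)]{NPR:19},
\begin{equation*}
X_E[2](\bb{t})=4\pi^{3/2}\sqrt{\tfrac{E}{\log E}}\left[-2\int_{\D_{\bb{t}}}B_E(x)^2dx+\int_{\D_{\bb{t}}}\norm{\wt{\nabla}B_E(x)}^2dx\right],
\end{equation*}
which converts the boundary functional into an \emph{area} integral of a pointwise function of the field. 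The $\bb{t}$-increment of such an area integral is trivially bounded by $\sup_x(|B_E(x)|^2+\norm{\wt\nabla B_E(x)}^2)\cdot\norm{\bb{t}-\bb{s}}$ (Lemma \ref{Contf}), and only then does Proposition \ref{MomChapter} enter, applied to $B_E$ and $\wt\nabla B_E$ themselves, yielding $\E{\omega(E)}\lesssim (\log E)^{-3/2}\cdot\log E\to 0$. Without this identity (or an equivalent device) your within-block step does not close.
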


The proof of Proposition \ref{PropTight2} is based on the following criterion by Davydov and Zitikis \cite[Theorem 1]{DZ:08} for proving weak convergence of processes on $[0,1]^d$.  

\begin{Thm}[see \cite{DZ:08}]\label{Thm:DZ1Ch}
Let $Y_n=\{Y_n(t):t\in [0,1]^d\}, n\geq 1$ be a collection of real-valued stochastic processes on $[0,1]^d$ such that its paths belong  $\Prob$-almost surely to $C([0,1]^d,\R)$. Assume furthermore that 
\begin{enumerate}[label=(\alph*)]
\item[\rm (a)] the finite-dimensional distributions of $Y_n$ converge to those of some stochastic process $Y$, 
\item[\rm (b)] there exist $\alpha\geq \beta>d, c>0$ and a numerical sequence $\{\alpha_n:n\geq 1\}$ such that $\alpha_n\to 0$ as $n\to \infty$, $\E{|Y_n(0)|^{\alpha}}\leq c $ for every $n\geq 1$ and 
\begin{eqnarray}\label{Eq:Cond1Ch}
\E{|Y_n(t)-Y_n(s)|^{\alpha}}
\leq c \norm{t-s}^{\beta}, \quad 
\forall t,s \in [0,1]^d: \norm{t-s}\geq \alpha_n,
\end{eqnarray}
\item[\rm (c)] for the sequence $\{\alpha_n:n\geq 1\}$ at point (b), we have as $n \to \infty$, 
\begin{eqnarray}\label{Eq:Cond2Ch}
\omega_{Y_n}(\alpha_n):= \sup_{\norm{t-s}\leq \alpha_n}
|Y_n(t)-Y_n(s)|
\toP 0.
\end{eqnarray} 
\end{enumerate}
Then, as $n\to \infty$, $Y_n$ converges weakly to $Y$ and $Y$ has continuous paths $\Prob$-almost surely.
\end{Thm}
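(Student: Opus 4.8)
The plan is to deduce the result from the classical \textbf{Prokhorov paradigm}: the space $C([0,1]^d,\R)$ is Polish, so a family of $C([0,1]^d,\R)$-valued random elements converges weakly as soon as (i) its finite-dimensional distributions converge and (ii) the family is tight. Hypothesis (a) furnishes (i) and pins down all subsequential limits; hence, once tightness is established, every subsequential weak limit is supported on $C([0,1]^d,\R)$ and carries the finite-dimensional laws of $Y$, forcing $Y_n\Rightarrow Y$ and, in particular, that $Y$ admits $\Prob$-almost surely continuous paths. The entire burden of the proof is therefore the tightness of $\{Y_n\}$ in $C([0,1]^d,\R)$.

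I would establish tightness through the Arzel\`{a}--Ascoli characterisation of relative compactness in $C([0,1]^d,\R)$, which reduces the task to two points: (1) tightness of the one-dimensional marginals at a fixed base point, and (2) asymptotic equicontinuity, namely
$$
\forall \eta>0:\qquad \lim_{\delta\to 0}\ \limsup_{n\to\infty}\ \Prob\{\omega_{Y_n}(\delta)>\eta\}=0 .
$$
Point (1) is immediate from the uniform bound $\E{|Y_n(0)|^{\alpha}}\le c$ and Markov's inequality, so everything comes down to the modulus-of-continuity estimate (2).

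The core is a \textbf{dyadic chaining argument truncated at the scale $\alpha_n$}. Introduce the dyadic grids $D_j=\{k2^{-j}:0\le k\le 2^j\}^d$ and let $\Delta_j$ be the largest increment of $Y_n$ between neighbouring points of $D_j$. For levels $j$ with $2^{-j}\ge\alpha_n$ the neighbouring grid points are separated by at least $\alpha_n$, so hypothesis (b) applies; summing the moment bound over the $O(2^{jd})$ neighbouring pairs yields $\E{\Delta_j^{\alpha}}\le c'\,2^{-j(\beta-d)}$. Since $\beta>d$, Markov's inequality with thresholds $x_j=2^{-j\theta}$ for a suitable $0<\theta\alpha<\beta-d$ makes $\sum_j\Prob\{\Delta_j>x_j\}$ a convergent geometric series with an $n$-independent tail; the standard chaining lemma then bounds the oscillation of $Y_n$ across separations $\asymp 2^{-m}$, resolved down to mesh $\asymp\alpha_n$, by $C\sum_{j\ge m}x_j\to 0$ off an event of uniformly small probability. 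The complementary, genuinely sub-$\alpha_n$ fluctuations — the passage from an arbitrary point to its nearest grid point together with the within-cell oscillation at the finest mesh — are increments over distances comparable to $\alpha_n$, hence dominated by $\omega_{Y_n}(\alpha_n)$, which tends to zero in probability by hypothesis (c). Combining, $\omega_{Y_n}(2^{-m})\le C\sum_{j\ge m}x_j+C_d\,\omega_{Y_n}(\alpha_n)$ off an event of probability $\le\epsilon_m$, and letting first $n\to\infty$ (killing the $\omega_{Y_n}(\alpha_n)$ term by (c)) and then $m\to\infty$ (sending the deterministic chaining tail and $\epsilon_m$ to zero) establishes (2).

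The main obstacle is the \textbf{interplay of the two scales} and the legitimacy of interchanging the limits in $n$ and $\delta$. The moment bound (b) is available only above the moving threshold $\alpha_n$, so one cannot run Kolmogorov--Chentsov down to arbitrarily fine scales; the argument must be arranged so that the coarse-scale chaining bound and the error probability $\epsilon_m$ are \emph{uniform in $n$} — which is exactly what the $n$-independence of $c,\alpha,\beta$ in (b) guarantees — while the residual below $\alpha_n$ is absorbed into $\omega_{Y_n}(\alpha_n)$ and eliminated by (c) \emph{before} $\delta\to 0$. The one delicate point in the bookkeeping is that the band of scales between $\alpha_n$ and the diameter of the finest grid cells is not covered by (b); this is harmless because it spans only boundedly many dyadic levels (a number depending on $d$ but not on $n$), each contributing at most $\omega_{Y_n}(\alpha_n)$, so the whole band is again controlled by (c). Getting this three-fold split — coarse chaining via (b), bounded transition band via (c), within-cell oscillation via (c) — to close uniformly in $n$ is the heart of the matter; the rest is routine Markov and union-bound estimates.
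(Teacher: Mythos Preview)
The paper does not contain its own proof of this statement: Theorem~\ref{Thm:DZ1Ch} is quoted verbatim from Davydov and Zitikis \cite{DZ:08} (hence the ``see \cite{DZ:08}'' in its heading) and is used in the paper only as a black box in the proof of Proposition~\ref{PropTight2}. There is therefore nothing in the paper to compare your proposal against.

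That said, your sketch is the correct argument and is in fact the one carried out in \cite{DZ:08}: reduce weak convergence to tightness plus finite-dimensional convergence, reduce tightness to control of the modulus of continuity, and obtain that control by running a Kolmogorov-type dyadic chaining down to mesh $\alpha_n$ using hypothesis~(b), then absorbing the residual sub-$\alpha_n$ oscillation into $\omega_{Y_n}(\alpha_n)$ via hypothesis~(c). Your identification of the key point --- that the chaining bound and its error probability are uniform in $n$ because the constants in (b) are --- is exactly right, and your handling of the transition band (finitely many dyadic levels, each bounded by $\omega_{Y_n}(\alpha_n)$) is also correct.
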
 
In order to prove that the process $X_E[2]$ verifies the assumptions \eqref{Eq:Cond1Ch} and \eqref{Eq:Cond2Ch}, our arguments make use of the following moment estimates for suprema of stationary Gaussian random fields. 
Here, for a function $f:\R^d\to\R$, a domain $\D\subset \R^d$ and an integer $j\geq 0$,  we denote by 
\begin{eqnarray*}
\norm{f}_{C^j(\D)} := \sup_{x\in \D} \sup_{|\alpha|\leq j} |\partial_{\alpha} f(x)|
\end{eqnarray*}
where $\partial_{\alpha}f(x):= \partial^{\alpha_1}_{x_1}\ldots \partial^{\alpha_d}_{x_d}f(x)$, for $\alpha:=(\alpha_1,\ldots, \alpha_d) $ with $|\alpha| := \sum_{k=1}^d \alpha_k$.  
 The proof of Proposition \ref{MomChapter} is postponed to  Appendix \ref{App:SupGauss}.
\begin{Prop} \label{MomChapter}
Let $G$ be a stationary Gaussian random field on $\R^d$. Assume that for every $m\geq 0$, there exists a constant $\tilde{\sigma}^2(m)<\infty$ such that   
\begin{eqnarray}\label{AssChapter}
\E{(\partial_{\alpha} G(x))^2} \leq \tilde{\sigma}^2(m), \quad \forall \alpha \in \N^d, |\alpha|\leq m.
\end{eqnarray}
Then, for any $p\geq 1$, \begin{eqnarray*}
\E{\norm{G}_{C^j(\D )}^p} \leq C \{\log (\mathrm{vol}(\D))\}^{p/2}
\end{eqnarray*}
where $C>0$ is an absolute constant depending on $p$ and $j$, and $\mathrm{vol}(\D)$ is the volume of $\D$.
\end{Prop}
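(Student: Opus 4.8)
The plan is to reduce the $C^j$-norm to the suprema of finitely many stationary Gaussian fields, and then to estimate each such supremum by combining the Dudley entropy bound with the Borell--TIS concentration inequality. The crucial structural fact that I would exploit is that \eqref{AssChapter} is assumed for \emph{every} order $m$, so that there is no shortage of derivative-variance bounds.

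First I would observe that, by definition,
\begin{equation*}
\norm{G}_{C^j(\D)}^p = \Big(\max_{|\alpha|\le j}\ \sup_{x\in\D}|\partial_\alpha G(x)|\Big)^p \le \sum_{|\alpha|\le j}\Big(\sup_{x\in\D}|\partial_\alpha G(x)|\Big)^p,
\end{equation*}
and that the number of multi-indices with $|\alpha|\le j$ depends only on $d$ and $j$. Hence it suffices to bound $\E{(\sup_{x\in\D}|H(x)|)^p}$ for each fixed field $H:=\partial_\alpha G$, $|\alpha|\le j$. Each such $H$ is a centered stationary Gaussian field; by \eqref{AssChapter} it has bounded variance $\sigma_0^2:=\sup_x\V{H(x)}\le\tilde\sigma^2(j)$, and its partial derivatives $\partial_i H=\partial_{\alpha+e_i}G$ satisfy $\V{\partial_i H(x)}\le\tilde\sigma^2(j+1)$; the availability of variance bounds of all orders also guarantees that $G$ (hence $H$) admits an a.s.\ smooth modification, so that the suprema below are genuine measurable random variables.

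The core estimate is $\E{\sup_{x\in\D}|H(x)|}\le C\sqrt{\log\mathrm{vol}(\D)}$, which I would obtain from Dudley's entropy bound (see e.g.\ \cite{AT:07}). The decisive point is that the canonical pseudo-metric $d_H(x,y):=\E{(H(x)-H(y))^2}^{1/2}$ is comparable to the Euclidean one at small scales while being globally bounded: writing $H(x)-H(y)=\int_0^1\nabla H(y+t(x-y))\cdot(x-y)\,dt$ and using the derivative-variance bound gives $d_H(x,y)\le C_1\norm{x-y}$, whereas $d_H(x,y)^2\le 2\V{H(x)}+2\V{H(y)}\le 4\sigma_0^2$ shows that the $d_H$-diameter of $\D$ is at most the constant $2\sigma_0$. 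Consequently the covering numbers satisfy $N(\D,d_H,\eps)\le N(\D,\norm{\cdot},\eps/C_1)\lesssim \mathrm{vol}(\D)\,\eps^{-d}$ for small $\eps$ and equal $1$ for $\eps\ge 2\sigma_0$, so that Dudley's integral splits as
\begin{equation*}
\E{\sup_{x\in\D}(H(x)-H(x_0))}\le C\int_0^{2\sigma_0}\sqrt{\log N(\D,d_H,\eps)}\,d\eps \le C'\Big(\sqrt{\log\mathrm{vol}(\D)}+\int_0^{2\sigma_0}\sqrt{\log(1/\eps)}\,d\eps\Big),
\end{equation*}
and the last integral converges to a constant. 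By Gaussian symmetry the same bound controls $\E{\sup_{x\in\D}|H(x)-H(x_0)|}$, and adding $\E{|H(x_0)|}\le\sigma_0$ yields the desired first-moment estimate for $\mathrm{vol}(\D)$ large.

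Finally I would upgrade this first-moment estimate to an $L^p$ estimate using the Borell--TIS inequality (see e.g.\ \cite{AT:07}): since $\sup_x\V{H(x)}\le\sigma_0^2$, the random variable $\sup_{x\in\D}|H(x)|$ concentrates around its mean with sub-Gaussian fluctuations of order $\sigma_0$, so that
\begin{equation*}
\E{\big(\sup_{x\in\D}|H(x)|\big)^p}^{1/p}\le \E{\sup_{x\in\D}|H(x)|}+C_p\,\sigma_0 \le C\sqrt{\log\mathrm{vol}(\D)}.
\end{equation*}
Summing over the finitely many $\alpha$ with $|\alpha|\le j$ then gives the claim. I expect the main obstacle to be the careful verification of the entropy estimate --- specifically, that boundedness of the canonical metric confines the Dudley integral to a fixed interval, so that the volume enters only through the covering numbers and produces exactly the factor $\sqrt{\log\mathrm{vol}(\D)}$ rather than a power of the (growing) diameter; a minor additional point is that the bound is meaningful only for $\mathrm{vol}(\D)$ bounded away from $1$, which is precisely the relevant regime in the applications, where $\D$ is an expanding domain.
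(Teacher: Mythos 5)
Your proof is correct, but it takes a genuinely different route from the paper's. You run a single chaining argument over the whole (large) domain: you bound the canonical metric of each derivative field $H=\partial_\alpha G$ by $\min(C_1\norm{x-y},2\sigma_0)$, feed the resulting covering numbers into Dudley's entropy integral so that the volume enters only through a $\sqrt{\log\mathrm{vol}(\D)}$ term while the $\eps^{-d}$ part of the entropy contributes a convergent integral over the fixed interval $[0,2\sigma_0]$, and then upgrade to $L^p$ by Borell--TIS. The paper instead localizes: it quotes a first-moment bound for $\norm{G}_{C^j}$ on a \emph{unit ball} (a Nazarov--Sodin appendix estimate, itself essentially a fixed-domain chaining bound), combines it with Borell--TIS to get an exponential-moment bound on each unit ball, covers $\D$ by $N_\D\asymp\mathrm{vol}(\D)$ unit balls, bounds $\mathbb{E}[e^{b\norm{G}_{C^j(\D)}}]$ by the sum over the cover, and optimizes the Chernoff parameter $b$ to extract $\sqrt{\log N_\D}$; the $L^p$ upgrade is then done by integrating the same Gaussian tail. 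Your approach is more self-contained (no external first-moment lemma is needed) at the price of invoking the Dudley machinery directly on an expanding domain, whereas the paper's union bound over unit balls uses stationarity to keep all local constants uniform and avoids any global entropy computation. Both arguments share the same harmless caveats, which you correctly flag: the bound is only meaningful when $\mathrm{vol}(\D)$ is bounded away from $1$, and the comparison between covering numbers and $\mathrm{vol}(\D)$ tacitly assumes $\D$ is a reasonable (e.g.\ cubical) domain, which is the case in the application $\D=[0,2\pi\sqrt{E}]^2$.
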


The following auxiliary results are needed to complete the proof Proposition \ref{PropTight2}. 
\begin{Lem}\label{Contf}
For $\bb{t}=(t_1,t_2)\in [0,1]^2$, we write $\D_{\bb{t}}:=[0,t_1]\times [0,t_2]$.
Then, for every continuous function $f:[0,1]^2\to \R$ and every $\bb{t},\bb{s}\in [0,1]^2$, we have
\begin{eqnarray*}
\left| \int_{\mathcal{D}_{\bb{t}}} f(x) dx 
- \int_{\mathcal{D}_{\bb{s}}} f(x) dx \right| \leq C \sup_{x\in [0,1]^2} |f(x)|\norm{\bb{t}-\bb{s}},
\end{eqnarray*}
for some absolute constant $C>0$.
\end{Lem}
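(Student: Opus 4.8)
The plan is to reduce the difference of the two integrals to a sum of integrals over two thin rectangular ``slabs'', each of Lebesgue measure comparable to $\norm{\bb{t}-\bb{s}}$, and then to bound the integrand pointwise by its supremum. The key structural feature making this work is that both $\mathcal{D}_{\bb{t}}$ and $\mathcal{D}_{\bb{s}}$ share the origin as a corner, so that passing from one to the other can be done by first adjusting the horizontal side and then the vertical one.

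Concretely, I would first insert the intermediate rectangle $\mathcal{D}_{(t_1,s_2)}=[0,t_1]\times[0,s_2]$ and use additivity of the integral to write the telescoping identity
\begin{equation*}
\int_{\mathcal{D}_{\bb{t}}} f\,dx - \int_{\mathcal{D}_{\bb{s}}} f\,dx
= \left(\int_{\mathcal{D}_{\bb{t}}} f\,dx - \int_{\mathcal{D}_{(t_1,s_2)}} f\,dx\right)
+ \left(\int_{\mathcal{D}_{(t_1,s_2)}} f\,dx - \int_{\mathcal{D}_{\bb{s}}} f\,dx\right).
\end{equation*}
By Fubini's theorem the first bracket equals $\int_0^{t_1}\!\int_{s_2}^{t_2} f(x_1,x_2)\,dx_2\,dx_1$ and the second equals $\int_0^{s_2}\!\int_{s_1}^{t_1} f(x_1,x_2)\,dx_1\,dx_2$, where the inner integrals are read as signed quantities when $t_2<s_2$ or $t_1<s_1$; this sign convention is harmless, since the next step only uses absolute values.

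Finally I would bound each term by $\sup_{x\in[0,1]^2}|f(x)|$. Using that $t_1,s_2\in[0,1]$, taking absolute values gives $|\int_0^{t_1}\!\int_{s_2}^{t_2} f| \leq \sup_{x}|f(x)|\,|t_2-s_2|$ and $|\int_0^{s_2}\!\int_{s_1}^{t_1} f| \leq \sup_{x}|f(x)|\,|t_1-s_1|$. Summing these and invoking $|t_1-s_1|+|t_2-s_2|\leq \sqrt{2}\,\norm{\bb{t}-\bb{s}}$ yields the claim with $C=\sqrt{2}$. I expect no genuine obstacle here: the estimate is an elementary consequence of the anchored-rectangle geometry together with the boundedness of a continuous function on the compact square. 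The only point deserving a moment's care is the bookkeeping of orientation when the coordinates of $\bb{t}$ and $\bb{s}$ are not ordered, but since one passes to absolute values immediately, this does not affect the bound.
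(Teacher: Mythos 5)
Your proof is correct and is essentially the paper's argument made explicit: the paper simply observes that the difference of the integrals is controlled by $\sup|f|$ times the area of the symmetric difference $\mathcal{D}_{\bb{t}}\,\triangle\,\mathcal{D}_{\bb{s}}$, which is at most $C\norm{\bb{t}-\bb{s}}$, and your telescoping through the intermediate rectangle $\mathcal{D}_{(t_1,s_2)}$ is precisely the decomposition of that symmetric difference into two thin slabs. Nothing further is needed.
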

\begin{proof}
This follows directly from the fact that $\mathrm{area}(\D_{\bb{t}}\setminus \D_{\bb{s}}) \leq C \norm{\bb{t}-\bb{s}}$, for some constant $C>0$ which is independent of $\bb{t}$ and $\bb{s}$. 
\end{proof} 
 
\begin{Lem}\label{SupGaussian}
For every $p\geq 1$ and $E>0$, we have  
\begin{eqnarray*}
\E{\sup_{x \in [0,1]^2} |B_E(x)|^p}
+ \E{ \sup_{x\in [0,1]^2} \norm{\wt{\nabla} B_E(x)}^p} \leq C (\log E)^{p/2},
\end{eqnarray*}
where $C>0$ is some absolute constant depending only on $p$. 
\end{Lem}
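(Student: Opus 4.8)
The plan is to reduce the two bounds to a single application of Proposition \ref{MomChapter}, by exploiting the scaling invariance that links $B_E$ to the canonical eigenfunction $b$ of Section \ref{ss:model}. The key observation is that, since the covariance of $B_E$ is $r^E(x-y) = J_0(2\pi\sqrt E\norm{x-y})$ while that of $b$ is $J_0(\norm{x-y})$, one has the equality in distribution of random fields
$$B_E(\cdot) \eqLaw b(2\pi\sqrt E\,\cdot).$$
Differentiating this relation and recalling that the normalized derivative $\tilde\partial_i$ rescales $\partial_i B_E$ so as to produce unit-variance components, a direct computation gives $\tilde\partial_i B_E(x) \eqLaw \sqrt 2\,(\partial_i b)(2\pi\sqrt E x)$, so that jointly $(B_E, \wt{\nabla} B_E)(\cdot) \eqLaw (b, \sqrt 2\,\nabla b)(2\pi\sqrt E\,\cdot)$ as processes. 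Consequently the suprema over $[0,1]^2$ become suprema of the \emph{fixed} field $b$ (and its gradient) over the expanding square $Q_E := [0,2\pi\sqrt E]^2$, whose volume is $\mathrm{vol}(Q_E) = 4\pi^2 E$.

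Then I would apply Proposition \ref{MomChapter} to $G = b$ on $\D = Q_E$. To do so I first check its hypothesis \eqref{AssChapter}: the field $b$ is stationary, hence $\E{(\partial_\alpha b(x))^2}$ is independent of $x$ and equals a moment of the spectral measure of $b$ (the uniform measure on the unit circle), which is finite for every multi-index $\alpha$; thus $\sup_{|\alpha|\le m}\E{(\partial_\alpha b(x))^2} =: \tilde\sigma^2(m) < \infty$ for every $m$, and --- crucially --- these constants do \emph{not} depend on $E$. Applying the proposition with $j=0$ and with $j=1$ then yields, for some constant $C = C(p)$,
$$\E{\norm{b}_{C^0(Q_E)}^p} + \E{\norm{b}_{C^1(Q_E)}^p} \le C\,(\log(4\pi^2 E))^{p/2}.$$

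To conclude, I would combine the two displays. By the distributional identities above, the left-hand side of the lemma equals $\E{\sup_{Q_E}|b|^p} + 2^{p/2}\,\E{\sup_{Q_E}\norm{\nabla b}^p}$, and both terms are dominated, up to harmless multiplicative constants and the equivalence of the Euclidean and max norms on $\R^2$, by $\E{\norm{b}_{C^0(Q_E)}^p}$ and $\E{\norm{b}_{C^1(Q_E)}^p}$ respectively. Since $\log(4\pi^2 E) \le 2\log E$ for all $E$ large enough, the bound $C(\log E)^{p/2}$ follows in the high-energy regime relevant to all our applications; for $E$ in any fixed bounded range the left-hand side is trivially finite and can be absorbed into $C$.

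I do not expect a genuine obstacle here: the entire analytic difficulty (an entropy/Dudley or Borell--TIS type argument for suprema of smooth stationary Gaussian fields) is already encapsulated in Proposition \ref{MomChapter}, which I may assume. The only point requiring care is the \emph{bookkeeping of the scaling}, namely verifying that after passing to the fixed field $b$ all the $E$-dependence is funneled into the volume of the observation window $Q_E$, so that the variance constants $\tilde\sigma^2(m)$ are $E$-free and the logarithmic growth in $E$ originates solely from $\mathrm{vol}(Q_E) \asymp E$.
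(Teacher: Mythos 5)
Your argument is correct and is essentially the paper's own proof: the paper likewise rescales to a fixed-wavelength field observed on the expanding square $[0,2\pi\sqrt E]^2$, checks hypothesis \eqref{AssChapter} by stationarity, and invokes Proposition \ref{MomChapter} with $\mathrm{vol}(\D)\asymp E$ to get the $(\log E)^{p/2}$ bound, treating the gradient term in the same way. Your additional bookkeeping (the factor $\sqrt2$ from the normalized derivatives and the absorption of $\log(4\pi^2E)\le 2\log E$ into the constant) is harmless and consistent with the paper.
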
  
\begin{proof}
We use the fact that $B_E \eqLaw B_1(2\pi\sqrt{E}\cdot)$ as random fields, so that 
\begin{eqnarray*}
\E{\sup_{x\in [0,1]^2} B_E(x)^p}
= \E{\sup_{y\in [0,2\pi\sqrt{E}]^2} B_1(y)^p}.
\end{eqnarray*}
It is easy to see that the assumption   \eqref{AssChapter} of Proposition \ref{MomChapter} is satisfied by $B_1$. Applying the estimate in Proposition \ref{MomChapter} with $\D=[0,2\pi\sqrt{E}]^2 \subset \R^2$ yields the desired conclusion. The second supremum involving the normalized gradient is dealt with in the same way.
\end{proof}

We are now in the position to prove Proposition \ref{PropTight2}.
\begin{proof}[Proof of Proposition \ref{PropTight2}]
In view of Theorem \ref{Thm:DZ1Ch} and the fact that the finite-dimensional distributions of $X_E[2]$ converge to those of the   zero-process, it is sufficient to prove that there exists a numerical sequence $\{a_E:E>0\}$ such that $a_E \to 0$ as $E\to \infty$ and 
(i) there exist absolute constants $\alpha\geq \beta>2, c>0$ such that 
\begin{eqnarray}\label{i}
\E{ \left| X_E[2](\bb{t}) -X_E[2](\bb{s}) \right|^{\alpha} } \leq c \norm{\bb{t}-\bb{s}}^{\beta} , \quad 
\forall\bb{t},\bb{s}:\norm{\bb{t}-\bb{s}} \geq a_E
\end{eqnarray}
 and (ii)
\begin{eqnarray}\label{ii}
\omega(E):= \sup_{\norm{\bb{t}-\bb{s}} \leq a_E} \left|X_E[2](\bb{t}) -X_E[2](\bb{s}) \right|
\toP 0,
\end{eqnarray}
as $E\to \infty$.
We claim that choosing $a_E:= (\sqrt{E}\log E)^{-1}$ verifies the above conditions (i) and (ii). 
Let us prove that (i) holds. The variance estimate in Theorem \ref{AsyVar} implies that there exists an absolute constant $K>0$ such that for every $E>0$ and every $\bb{t}\in [0,1]^2$, $\V{X_E[2](\bb{t})}\leq K( \sqrt{E} \log E)^{-1}$. Therefore, choosing $\alpha=2$ in \eqref{i}, we infer that 
for every $\bb{t},\bb{s}$ such that $\norm{\bb{t}-\bb{s}}\geq (\sqrt{E}\log E)^{-1}$, 
\begin{gather*}
\E{ \left( X_E[2](\bb{t}) - X_E[2](\bb{s}) \right)^2 }
\leq 2 \V{X_E[2](\bb{t})} + 2\V{X_E[2](\bb{s})} 
\leq
\frac{2K}{\sqrt{E} \log E} \leq c\norm{\bb{t}-\bb{s}}. 
\end{gather*}
Since for every $\bb{t}\in [0,1]^2$, $X_E[2](\bb{t})$ is an element of the second Wiener chaos associated with $B_E$, we exploit the hypercontractivity property for multiple Wiener integrals (see \cite[Theorem 2.7.2]{NP:12}) to obtain (for some absolute constant $C>0$) \begin{eqnarray*}
 \E{ \left( X_E[2](\bb{t}) - X_E[2](\bb{s}) \right)^p } 
\leq C\E{ \left( X_E[2](\bb{t}) - X_E[2](\bb{s}) \right)^2 }^{p/2}  
\leq C\norm{\bb{t}-\bb{s}}^{p/2}  
\end{eqnarray*}
for every $p>4$, 
which gives the desired estimate in \eqref{i} since $p/2>2$. 
Let us now argue that (ii) holds. 
By \cite[Eq. (4.58)]{NPR:19}, we can write  
\begin{eqnarray*}\label{dual}
X_E[2](\bb{t}) &=& \sqrt{\frac{512\pi}{\log E}} \mathscr{L}_E[2](\mathcal{D}_{\bb{t}}) =
\sqrt{\frac{512\pi}{\log E}} \frac{\pi}{8} \sqrt{2E}
\left[-2\int_{\mathcal{D}_{\bb{t}}} B_E(x)^2 dx +
\int_{\mathcal{D}_{\bb{t}}} \norm{\wt{\nabla} B_E(x)}^2 dx\right] \notag \\ 
&=& 4\pi^{3/2} \sqrt{\frac{E}{\log E}} \left[-2\int_{\mathcal{D}_{\bb{t}}} B_E(x)^2 dx +
\int_{\mathcal{D}_{\bb{t}}} \norm{\wt{\nabla} B_E(x)}^2 dx\right].
\end{eqnarray*}
Combining this expression with
Lemma \ref{Contf} applied to $f=B_E(\cdot)^2$ and $f= \norm{\wt{\nabla} B_E(\cdot)}^2$,  yields for every choice of $\bb{t},\bb{s}$ such that $\norm{\bb{t}-\bb{s}} \leq a_E$  (denoting by $C$ an absolute constant whose value varies from line to line)
\begin{eqnarray*}
&&\left|X_E[2](\bb{t}) -X_E[2](\bb{s}) \right| \\
&\leq& C \sqrt{\frac{E}{\log E}}
\{ \left| \int_{\mathcal{D}_{\bb{t}}} B_E(x)^2dx- \int_{\mathcal{D}_{\bb{s}}} B_E(x)^2 dx\right| 
+ \left| \int_{\mathcal{D}_{\bb{t}}}  \norm{\wt{\nabla} B_E(x)}^2dx 
- \int_{\mathcal{D}_{\bb{s}}}  \norm{\wt{\nabla} B_E(x)}^2\right|
 \}
\\
&\leq& C\sqrt{\frac{E}{\log E}}  \{ \sup_{x\in [0,1]^2} |B_E(x)|^2 
+ \sup_{x\in [0,1]^2}\norm{\wt{\nabla} B_E(x)}^2    \} \norm{\bb{t}-\bb{s}}  \\
&\leq & \frac{C}{(\log E)^{3/2}}\{ \sup_{x\in [0,1]^2} |B_E(x)|^2 
+ \sup_{x\in [0,1]^2}\norm{\wt{\nabla} B_E(x)}^2    \}  ,
\end{eqnarray*}
where we used the definition of $a_E$.
This implies that 
\begin{eqnarray*}
\E{\omega(E)} &=& 
\E{  \sup_{\norm{\bb{t}-\bb{s}} \leq a_E} \left|X_E[2](\bb{t}) -X_E[2](\bb{s}) \right| } \\
&\leq &\frac{C}{(\log E)^{3/2}}\{ \E{\sup_{x\in [0,1]^2} |B_E(x)|^2 }
+ \E{\sup_{x\in [0,1]^2}\norm{\wt{\nabla} B_E(x)}^2 dx}  \}  \\
&\leq& \frac{C}{(\log E)^{3/2}} \cdot \log E = \frac{C}{\sqrt{\log E}}
\end{eqnarray*}
where we used Lemma \ref{SupGaussian} with $p=2$. 
Therefore, by the Markov inequality we have for every $\eta>0$,
\begin{eqnarray*}
\Prob\{ \omega(E) >\eta \} \leq 
\eta^{-1} \E{\omega(E)} \leq \frac{C}{\eta \sqrt{\log E}},
\end{eqnarray*}
which proves the validity of (ii). 
\end{proof}

\subsection{Proof of Theorem \ref{Thm:DiscretizedR}
and Corollary \ref{Cor:Discretized}
}\label{Sec:DiscretizedR}
Our proof of Theorem \ref{Thm:DiscretizedR} is based on a planar chaining argument, similar to the one presented in \cite{DT:89} and \cite{MW:11b} in dimension one for a study related to empirical processes. 

\medskip
We start with a preliminary lemma, yielding a $L^2$ bound for increments of $R_E= \sum_{q\geq 3}X_E[2q]$ along rectangles of the form $[s_1,t_1]\times [s_2,t_2]\subset [0,1]^2$.
\begin{Lem}\label{Cheby}
For every $0\leq s_i\leq t_i \in [0,1], i=1,2$, and $E>0$,  we have that 
\begin{eqnarray*}
\E{\(R_E(t_1,t_2)-R_E(s_1,t_2)
- R_E(t_1,s_2)
+R_E(s_1,s_2)\)^2}\leq  
 \frac{C}{\log E}\[(t_1-s_1)(t_2-s_2)\],
\end{eqnarray*}
where $C>0$ is some absolute constant (independent of $t_1,t_2,s_1,s_2$ and $E$).
\end{Lem}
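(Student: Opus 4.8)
The plan is to reduce the rectangular increment of $R_E$ to the chaotic projections of the nodal length on the rectangle $Q:=[s_1,t_1]\times[s_2,t_2]$ and then to exploit Wiener chaos orthogonality. Since $\D\mapsto\mathscr{L}_E(\D)$ is $\Prob$-a.s. a measure, the inclusion--exclusion identity for rectangles gives
\begin{equation*}
\mathscr{L}_E(\D_{(t_1,t_2)}) - \mathscr{L}_E(\D_{(s_1,t_2)}) - \mathscr{L}_E(\D_{(t_1,s_2)}) + \mathscr{L}_E(\D_{(s_1,s_2)}) = \mathscr{L}_E(Q),
\end{equation*}
and the same identity holds chaos-by-chaos, because the projection onto the $2q$th Wiener chaos is linear and $L^2$-continuous. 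Recalling that $R_E=\sum_{q\geq 3}X_E[2q]$ and that $X_E[2q](\bb{t})=\sqrt{512\pi/\log E}\,\mathscr{L}_E[2q](\D_{\bb{t}})$ for $q\geq 1$, the increment in the statement equals $\sqrt{512\pi/\log E}\sum_{q\geq 3}\mathscr{L}_E[2q](Q)$; by orthogonality of distinct centred chaoses I would then obtain
\begin{equation*}
\E{\(R_E(t_1,t_2)-R_E(s_1,t_2)-R_E(t_1,s_2)+R_E(s_1,s_2)\)^2} = \frac{512\pi}{\log E}\sum_{q\geq 3}\V{\mathscr{L}_E[2q](Q)}.
\end{equation*}
Thus the lemma reduces to the \emph{uniform} estimate $\sum_{q\geq 3}\V{\mathscr{L}_E[2q](Q)}\leq C_0\,\mathrm{area}(Q)$, valid for every rectangle $Q$ and every $E>0$.

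To establish this, I would write the tail sum as a difference of three explicit variances and use stationarity of $B_E$ to represent each as a double integral of a translation-invariant two-point kernel, namely
\begin{equation*}
\sum_{q\geq 3}\V{\mathscr{L}_E[2q](Q)} = \V{\mathscr{L}_E(Q)} - \V{\mathscr{L}_E[2](Q)} - \V{\mathscr{L}_E[4](Q)} = \int_{Q\times Q}\Psi_E(x-y)\,dx\,dy,
\end{equation*}
where $\Psi_E$ is the kernel of the difference. The elementary translation inequality $\int_{Q\times Q}|\Psi_E(x-y)|\,dx\,dy\leq \mathrm{area}(Q)\,\norm{\Psi_E}_{L^1(\R^2)}$ then reduces everything to controlling $\norm{\Psi_E}_{L^1(\R^2)}$. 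Here I would invoke the scaling relation $B_E\eqLaw b(2\pi\sqrt{E}\,\cdot)$ used in Remark \ref{r:chains}-(b) and in the proof of Lemma \ref{SupGaussian}: a change of variables shows that $\norm{\Psi_E}_{L^1(\R^2)}$ is in fact \emph{independent of $E$} and equals the $L^1$-norm $\norm{\Psi_1}_{L^1(\R^2)}$ of the corresponding kernel attached to the unit-eigenvalue field $b$. Consequently it suffices to prove that this single, $E$-free kernel is integrable on $\R^2$.

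The integrability of $\Psi_1$ is the heart of the matter and the step I expect to be the main obstacle. Near the origin $\Psi_1$ is integrable simply because each chaotic projection $\mathscr{L}(b;\cdot)[2q]$ is square-integrable. The delicate point is the decay at infinity: using the Bessel asymptotics \eqref{Eq:BesselAsy} for $r^E$ and its normalized derivatives $\tilde{r}^E_{i,j}$, exactly as in the line-segment computations of Section \ref{LS}, the full centred two-point kernel and the chaos-four kernel both carry the \emph{same} marginally non-integrable tail of order $|z|^{-2}$ as $|z|\to\infty$. This common tail is precisely what produces the logarithmic growth $\V{\mathscr{L}_E(\D)}\sim\mathrm{area}(\D)\log E/(512\pi)$ of Theorem \ref{thm_L}, and the fact that the fourth chaos captures it is exactly the dominance phenomenon underlying Theorem \ref{Thm:DomX4}. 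In the difference $\Psi_1$ these $|z|^{-2}$ contributions cancel, while the chaos-two kernel — being a boundary functional, cf. \eqref{Second} and Theorem \ref{AsyVar} — contributes only a subleading (perimeter-type) term that does not affect the $|z|^{-2}$ tail. The surviving decay of $\Psi_1$ is then strictly faster than $|z|^{-2}$ (of order $|z|^{-3}$), hence integrable in dimension two. Granting $\norm{\Psi_1}_{L^1(\R^2)}<\infty$, I would conclude with $C=512\pi\,\norm{\Psi_1}_{L^1(\R^2)}$, which yields the stated bound uniformly in $(s_1,t_1,s_2,t_2)$ and $E$.
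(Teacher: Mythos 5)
Your first paragraph reproduces exactly the paper's reduction: by additivity of the nodal length and linearity of the chaotic projections, the rectangular increment of $R_E$ equals $\sqrt{512\pi/\log E}\,\sum_{q\geq 3}\mathscr{L}_E[2q](Q)$ with $Q=[s_1,t_1]\times[s_2,t_2]$, so everything hinges on the uniform bound $\V{\sum_{q\geq 3}\mathscr{L}_E[2q](Q)}\leq C_0\,\mathrm{area}(Q)$. The paper obtains this bound by inspection of the arguments in \cite[Lemmas 7.6, 7.8, 7.9]{NPR:19} and stops there; you instead attempt to prove it, and this is where your argument is only a sketch with a genuine gap.

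The gap is the integrability of your difference kernel $\Psi_1$, which you assert via the heuristic that the $|z|^{-2}$ tails of the full two-point kernel and of the fourth-chaos kernel cancel, leaving $|z|^{-3}$ decay. This cancellation is precisely the Berry cancellation phenomenon and carrying it out rigorously (expanding the Kac--Rice two-point density and the chaos-$4$ covariance to the order $|z|^{-3}$, including the oscillatory terms) is the entire technical content of the cited lemmas of \cite{NPR:19}; asserting it does not discharge it. Moreover, the subtraction route you choose is more delicate than necessary: the individual kernels you subtract are \emph{not} absolutely integrable on $\R^2$ (the full and chaos-$4$ kernels carry non-oscillating $|w|^{-2}$ tails, and even the chaos-$2$ bulk kernel has an oscillating $|w|^{-1}$ tail whose smallness rests entirely on sign cancellations), so your translation inequality $\int_{Q\times Q}|\Psi_E(x-y)|\,dx\,dy\leq \mathrm{area}(Q)\norm{\Psi_E}_{L^1(\R^2)}$ can only be applied \emph{after} the cancellations have been verified at the level of the combined kernel, never term by term. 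The standard, and easier, way to get the uniform area bound is to estimate the tail-sum kernel directly: the covariance kernel of the chaos-$2q$ projection is a sum of products of exactly $2q$ normalized two-point covariances, each bounded by $\min\(1,(\sqrt{E}\norm{z})^{-1/2}\)$, so for $q\geq 3$ each term decays at least like $(\sqrt{E}\norm{z})^{-3}$ and no cancellation is needed; summing over $q$ (with control of the combinatorial coefficients, which is where the real work of \cite{NPR:19} lies) gives an absolutely integrable kernel with $E$-independent $L^1$-norm after your (correct) scaling observation. With that substitution your scheme closes; as written, the key estimate is assumed rather than proved.
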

\begin{proof}
Let $\D(\bb{t},\bb{s}):=[s_1,t_1]\times [s_2,t_2]$.
By definition of $R_E$ and the additivity of the nodal length, we have 
\begin{eqnarray}\label{diff}
R_E(t_1,t_2)-R_E(s_1,t_2)
- R_E(t_1,s_2)
+R_E(s_1,s_2)  
=\sqrt{\frac{512\pi}{\log E}}\sum_{q\geq3}\mathscr{L}_E{[2q]}(\D(\bb{t},\bb{s})) .
\end{eqnarray} 
By inspection of the arguments used in the proofs of \cite[Lemmas 7.6,7.8,7.9]{NPR:19}, one verifies that there is an absolute constant $C_1>0$ (independent of $\bb{t},\bb{s}$ and $E$) such that
\begin{eqnarray*}
\V{\sum_{q\geq3}\mathscr{L}_E[2q](\D(\bb{t},\bb{s}))} \leq C_1 \mathrm{area}(\D(\bb{t},\bb{s})).
\end{eqnarray*}
Taking the square of $L^2(\Prob)$-norm in \eqref{diff}, and exploiting the above upper bound, we obtain
\begin{gather*}
\E{\(R_E(t_1,t_2)-R_E(s_1,t_2)
- R_E(t_1,s_2)
+R_E(s_1,s_2)\)^2}
=
\frac{512\pi}{\log E}\V{\sum_{q\geq3}\mathscr{L}_E[2q](\D(\bb{t},\bb{s}))}\\
\leq C_1 \frac{512\pi}{\log E}\mathrm{area}(\D(\bb{t},\bb{s}))
=  \frac{C}{\log E} (t_1-s_1)(t_2-s_2), 
\end{gather*}
which gives the desired conclusion.
\end{proof}

We are now in the position to prove Proposition \ref{Thm:DiscretizedR}. 
\begin{proof}[Proof of Theorem \ref{Thm:DiscretizedR}]
We start by introducing refining partitions of the unit square. 

\medskip
\noindent{\bf Refining partitions of the unit square.}
  Let us fix a large integer $K$ (whose exact value will be chosen later as a function of $E$).
For every integers $k,k'=0,\ldots, K$, and every vector $i =(i_1,i_2)\in \{0,\ldots, 2^{k}\}\times \{0,\ldots, 2^{k'}\}$, we define the {\bf partition points} 
\begin{eqnarray*}
\bb{p}_{i}(k,k'):=\(p_{i_1}(k),p_{i_2}(k')\)
= \(\frac{i_1}{2^k}, \frac{i_2}{2^{k'}}\) \in [0,1]^2.
\end{eqnarray*}
Moreover, for every $\bb{t}=(t_1,t_2)\in [0,1]^2$ and   $k,k'=0,\ldots, K$, we define the vector $i_{k,k'}(\bb{t})=\(i_{1,k}(t_1),i_{2,k'}(t_2)\)$ to be such that  
\begin{eqnarray*}
p_{i_{1,k}(t_1)}(k)\leq t_1 \leq p_{i_{1,k}(t_1)+1}(k),  \quad 
p_{i_{2,k'}(t_2)}(k')\leq t_2 \leq p_{i_{2,k'}(t_2)+1}(k'),
\end{eqnarray*}
 that is, for every $\bb{t}\in [0,1]^2$, the vector $i_{k,k'}(\bb{t})$ is such that $\bb{p}_{i_{k,k'}(\bb{t})}(k,k')$ is the closest partition point to $\bb{t}$ on the left. 
 
 \medskip
We introduce the following operators. 
\begin{Def} 
Given a function $f:[0,1]^2 \to \R$ a point $\bb{t}=(t_1,t_2) \in [0,1]^2$,  and $k,k'\in \{0,\ldots,K-1\}$, we define the {\bf difference operator}
\begin{eqnarray*}
\Delta_{k,k'}f(\bb{t})
&:=& f\( p_{i_{1,k+1}(t_1)}(k+1), p_{i_{2,k'+1}(t_2)}(k'+1) \)
- f\( p_{i_{1,k+1}(t_1)}(k+1), p_{i_{2,k'}(t_2)}(k') \)\\
&&- f\( p_{i_{1,k}(t_1)}(k), p_{i_{2,k'+1}(t_2)}(k'+1) \)
+ f\( p_{i_{1,k}(t_1)}(k), p_{i_{2,k'}(t_2)}(k') \).
\end{eqnarray*}
Also, for $k,k' \in \{0,\ldots, K-1\}$, we set 
 \begin{eqnarray*}
\Delta_{K,k'}f(\bb{t})
&:=& f\( t_1, p_{i_{2,k'+1}(t_2)}(k'+1) )
- f( t_1, p_{i_{2,k'}(t_2)}(k') \)\\
&&- f\( p_{i_{1,K}(t_1)}(K), p_{i_{2,k'+1}(t_2)}(k'+1) \)
+ f\( p_{i_{1,K}(t_1)}(K), p_{i_{2,k'}(t_2)}(k') \),\\
\Delta_{k,K}f(\bb{t})
&:=&f\( p_{i_{1,k+1}(t_1)}(k+1), t_2\)
- f\( p_{i_{1,k+1}(t_1)}(k+1), p_{i_{2,K}(t_2)}(K) \)\\
&&- f\( p_{i_{1,k}(t_1)}(k), t_2 \)
+ f\( p_{i_{1,k}(t_1)}(k), p_{i_{2,K}(t_2)}(K) \)
\end{eqnarray*}
and finally
\begin{eqnarray*}
\Delta_{K,K}f(\bb{t})
&:=&f\( t_1, t_2\)
- f\(t_1, p_{i_{2,K}(t_2)}(K) \)\\
&&- f\( p_{i_{1,K}(t_1)}(K), t_2 \)
+ f\( p_{i_{1,K}(t_1)}(K), p_{i_{2,K}(t_2)}(K) \).
\end{eqnarray*}
Also, we use the notations $\Delta_{k,k'}^+, \Delta_{K,k'}^+$ and 
$\Delta_{k,K}^+$ to indicate the   operators obtained from the relations above by replacing $t_1$ and $t_2$ with $p_{i_{1,k}(t_1)+1}(k)$ and 
$p_{i_{2,k'}(t_2)+1}(k')$, respectively.
 \end{Def}
 
We remark that, by construction of the refining partitions, we have either   $p_{i_{1,k}}(k) = p_{i_{1,k+1}(t_1)}(k+1)$ or 
$p_{i_{1,k+1}(t_1)}(k+1)-p_{i_{1,k}(t_1)}(k) = 2^{-(k+1)}$ (and similarly for partition coordinates involving the index $i_2$) which yields in particular 
 \begin{eqnarray}\label{Eq:Delta}
&&\left| \Delta_{k,k'} f(t_1,t_2)\right|\notag\\
&\leq&
\biggl| f\(p_{i_{1,k}(t_1)}(k)+\frac{1}{2^{k+1}}, 
p_{i_{2,k'}(t_2)}(k')+\frac{1}{2^{k'+1}}\)
-   f\(p_{i_{1,k}(t_1)}(k)+\frac{1}{2^{k+1}}, 
p_{i_{2,k'}(t_2)}(k')\)\notag\\
&&- f\(p_{i_{1,k}(t_1)}(k), 
p_{i_{2,k'}(t_2)}(k')+\frac{1}{2^{k'+1}}\)
+ f\(p_{i_{1,k}(t_1)}(k), 
p_{i_{2,k'}(t_2)}(k')\)\biggr|.
 \end{eqnarray}
In view of the above defined difference operators, the following bivariate telescopic formula holds
\begin{eqnarray}\label{Eq:Tele}
f(t_1,t_2) = \sum_{k,k'=0}^K
\Delta_{k,k'}f(t_1,t_2)
\end{eqnarray}
for every $f:[0,1]^2\to \R$. 

\medskip
Let us now write $R_E^K$ for the discretized version of $R_E$ associated with the above partition. Applying \eqref{Eq:Tele} to  $R_E^K$, we can write for every $\bb{t}\in [0,1]^2$,
\begin{eqnarray}\label{Eq:BBc}
\left|R_E^K(\bb{t})\right|
&=& \left|\sum_{k,k'=0}^K
\Delta_{k,k'}R_E^K(\bb{t})\right|\notag \\
&\leq&  \sum_{(k,k')\in B(K)^c}
\left|\Delta_{k,k'}R_E^K(\bb{t})\right|
+ \left|\sum_{(k,k')\in B(K)}
\Delta_{k,k'}R_E^K(\bb{t})\right|,
\end{eqnarray}
where we set $B(K):=
\{(k,k')\in \{0,\ldots, K\}^2: \max(k,k')=K\}$. Note that the second term in the R.H.S of \eqref{Eq:BBc} vanishes by definition of the operators $\Delta_{K,k'},\Delta_{k,K}$ and $\Delta_{K,K}$, and the fact that we consider the discretized remainder $R_E^K$: indeed, for every $(k,k') \in B(K)$, we have that 
$\Delta_{k,k'}R_E^K(\bb{t}) = 0$.
From this, we conclude that, for every $\eps >0$, 
\begin{eqnarray}\label{Eq:BoundProb}
\Prob\{ \sup_{\bb{t}\in [0,1]^2}
\left|R_E^K(\bb{t})\right|>\eps\} 
\leq \Prob\{ \sup_{\bb{t}\in [0,1]^2}
\sum_{(k,k')\in B(K)^c}
\left|\Delta_{k,k'}R_E^K(\bb{t})\right|>\eps\}. 
\end{eqnarray}
We remark that the R.H.S involves the increments on closest partition points associated with $\bb{t}$. 
Now, using the fact that 
\begin{eqnarray*}
\sum_{k,k'\geq 0} \frac{\eps}{(k+3)^2(k'+3)^2} \leq \eps
\end{eqnarray*}
and the Chebychev inequality, we can bound the probability in \eqref{Eq:BoundProb} by
\begin{eqnarray*}
&& \sum_{(k,k')\in B(K)^c}
\Prob\{ \sup_{\bb{t}\in [0,1]^2}
\left|\Delta_{k,k'}R_E^K(\bb{t})\right|> \frac{\eps}{(k+3)^2(k'+3)^2}
\}\\
&\leq& \sum_{(k,k')\in B(K)^c} 
\sum_{i_1=0}^{2^k}\sum_{i_2=0}^{2^{k'}}
\Prob\{ 
\left|\Delta_{k,k'}R_E^K(\bb{p}_{(i_1,i_2)}(k+1,k+1))\right|> \frac{\eps}{(k+3)^2(k'+3)^2}\}\\
&\leq&\sum_{(k,k')\in B(K)^c} \frac{(k+3)^4(k'+3)^4}{\eps^2}
\sum_{i_1=0}^{2^k}\sum_{i_2=0}^{2^{k'}} \frac{C}{\log E} \frac{1}{2^{k+1}}\frac{1}{2^{k'+1}} \leq \frac{C'}{\log E} K^{10},
\end{eqnarray*}
where we used Lemma \ref{Cheby}. Therefore, this probability converges to zero once we chose $K=K(E)$ in such a way that 
$K(E)\to \infty $ and $K(E) = o((\log E)^{1/10})$ as $E\to\infty$. This concludes the proof. 
\end{proof}

\begin{proof}[Proof of Corollary \ref{Cor:Discretized}]
Let us choose $K=K(E)$ as in the statement. 
By the Wiener chaos expansion of $X_E^{K}$, we can write 
\begin{gather*}
X_E^K 
= X_E[4]  + \(X_E^K[4] 
- X_E[4]  \)
+X_E[2] + \(X_E^K[2] 
- X_E[2] \)
+ R_E^K .
\end{gather*}
We use the same strategy used to prove Lemma \ref{Lem:UVW}. The process $X_E[4]$ converges weakly to a standard Wiener sheet in the space $\mathbf{D}_2$, in view  of \cite[Theorem 3.4]{PV:20}. The residue process $R_E^K$ converges to zero uniformly in probability, in view of Theorem \ref{Thm:DiscretizedR}.
The second chaotic projections converge weakly to zero in view of Corollary \ref{Cor:SecondFCLT}.
For the term $X_E^K[4]-X_E[4]$ we argue that, for every $\eps>0$,
\begin{eqnarray*}
\Prob\{ \sup_{\bb{t}\in [0,1]^2 } \left|X_E^K[4](\bb{t})-X_E[4](\bb{t})\right| > \eps \} \to 0
\end{eqnarray*}
as $E\to \infty$. 
By definition of $X_E^K[4]$, we can rewrite   
\begin{eqnarray*}
\Prob\{ \sup_{\bb{t}\in [0,1]^2 } \left|X_E^K[4](\bb{t})-X_E[4](\bb{t})\right| > \eps \} 
=\Prob\{ \sup_{\bb{t}\in [0,1]^2 } \left|X_E[4](\bb{p}_{i_{K,K}(\bb{t})}(K,K))-X_E[4](\bb{t})\right| > \eps \} .
\end{eqnarray*}
Since both $X_E[4]$ and $\bb{W}$ belong to the space $\mathbf{C}_2$, $\Prob$-almost surely, we can apply  the Skorohod Representation Theorem   \cite[Theorem 11.7.2]{Du:02}. Thus, on some probability space $(\Omega',\mathscr{F}',\Prob')$, there exist  $\{Y_E':E>0\}, Z' \in \mathbf{C}_2$  such that $Y_E' \eqLaw X_E[4] , Z' \eqLaw \mathbf{W}$ and 
$$\sup_{\mathbf{t}\in [0,1]^2} | Y_E'(\mathbf{t})-Z'(\mathbf{t})| \to 0,$$ $\Prob'$-almost surely as $E\to \infty$.  
Therefore, denoting by $\Delta\in P_E$ a cell of a partition $P_E$ of $[0,1]^2$ with mesh $|P_E|\to 0$ as $E\to\infty$, we can write 
\begin{eqnarray*}
&&\Prob\{ \sup_{\bb{t}\in [0,1]^2 } \left|X_E[4](\bb{p}_{i_{K,K}(\bb{t})}(K,K))-X_E[4](\bb{t})\right| > \eps \}\\
&\leq&
\Prob'\{\sup_{\Delta\in P_E}
\sup_{\mathbf{t},\mathbf{s}\in \Delta}
|{X_E' (\bb{t})-X_E'(\bb{s})}| > \eps\} \\
&\leq & \Prob'\{\sup_{\Delta\in P_E}
\sup_{\mathbf{t},\mathbf{s}\in \Delta}
\(|{X_E' (\bb{t})-Z'(\bb{t})}| + 
|{Z' (\bb{s})-X_E'(\bb{s})}|\)  > \frac{\eps}{2}\}\\
&& \qquad
+  \Prob'\{\sup_{\Delta\in P_E}
\sup_{\mathbf{t},\mathbf{s}\in \Delta} 
|{Z' (\bb{t})-Z'(\bb{s})}|  > \frac{\eps}{2}\}
\end{eqnarray*}
where we used the triangle inequality. 
For the first probability, we use the fact that $X'_E$ converges   to $ Z'$   uniformly on $[0,1]^2$, $\Prob'$-almost surely, implying that  
 \begin{eqnarray*}
 \sup_{\Delta\in P_E}
\sup_{\mathbf{t},\mathbf{s}\in \Delta}
\(|{X_E' (\bb{t})-Z'(\bb{t})}| + 
|{Z' (\bb{s})-X_E'(\bb{s})}|\)
\leq 
2\sup_{\mathbf{t}\in [0,1]^2}
|X_E'(\bb{t})-Z'(\bb{t})| \to 0,
\end{eqnarray*} 
$\Prob'$-almost surely. For the second term, we notice that $Z'$ is uniformly continuous on $[0,1]^2$ (being continuous on $[0,1]^2$), so that $\sup_{\bb{t},\bb{s}\in \Delta} |Z'(\bb{t})-Z'(\bb{s})| \to 0$, $\Prob'$-almost surely.
\end{proof}

\subsection{Proof of Proposition \ref{Prop:TruncatedR}}
By Lemma \ref{Cheby}, we deduce that for every $\bb{t}\in [0,1]^2$, 
\begin{eqnarray*}
\V{R_E(\bb{t})} = O\(\frac{1}{\log E}\),
\end{eqnarray*}
where the constants involved in the 'big-O' notation are independent of $\bb{t}$ and $E$. This implies that 
 the finite-dimensional distributions of 
the process $R_E(\bullet,N)$ converge to zero for every fixed $N\geq 4$. Therefore, in order to obtain the desired conclusion, it is sufficient to prove that the laws of the random mappings
$\{R_E(\bullet; N):E>0\}$ (for $N=N(E)$ as in the statement) verify a Kolmogorov type estimate of the form
\begin{eqnarray}\label{Eq:KOL}
\E{\( R_E(\bb{t};N) - R_E(\bb{s};N)\)^{\alpha}}
\leq c \norm{\bb{t}-\bb{s}}^{2+\beta}
, \quad \forall \bb{t},\bb{s}\in [0,1]^2
\end{eqnarray}
for some constants $\alpha,\beta>0$ and $c>0$ that are independent of $E$. 
Denoting by $\D_{\bb{t}}:= [0,t_1]\times [0,t_2]$ and $\D_{\bb{t},\bb{s}}:= \D_{\bb{t}}\setminus \D_{\bb{s}}$, we have that for every integer $N$ (to be chosen later as a function of $E$)  and every $p>2$,
\begin{eqnarray*}
\E{\( R_E(\bb{t};N) - R_E(\bb{s};N)\)^{p}}^{1/p}
= \frac{512\pi}{\log E} \E{\( \sum_{q=3}^N \mathscr{L}_E[2q](\D_{\bb{t},\bb{s}})\)^{p}}^{1/p}.
\end{eqnarray*}
Since $\sum_{q=3}^N \mathscr{L}_E[2q](\D_{\bb{t},\bb{s}})$ is a random variable living in the orthogonal sum of Wiener chaoses up to order $2N$, we use the hypercontractivity property (\cite[Theorem 2.7.2]{NP:12}) together with 
Lemma \ref{Cheby}, to deduce that 
\begin{eqnarray*}
\E{\( R_E(\bb{t};N) - R_E(\bb{s};N)\)^{p}}^{1/p}
&\leq& \frac{512\pi}{\log E} 
(p-1)^{N} 
\V{ \sum_{q=3}^N \mathscr{L}_E[2q](\D_{\bb{t},\bb{s}}) }^{1/2}\\
&\leq&  \frac{c_p}{ \log E } (p-1)^{N} \norm{t-s}^{1/2},
\end{eqnarray*}
where $c_p$ is some absolute constant only depending on $p$.
In particular, for $p=6$ we obtain the estimate 
\begin{eqnarray*}
\E{\( R_E(\bb{t};N) - R_E(\bb{s};N)\)^{6}}
\leq \{\frac{c_6}{ \log E } 5^{N} \norm{t-s}^{1/2}\}^6 = c \frac{5^{6N}}{(\log E)^6}
\norm{t-s}^3,
\end{eqnarray*}
for some absolute constant $c>0$. Thus in order to prove \eqref{Eq:KOL}, it is sufficient to choose $N=N(E)$ such that (the constant $1$ is not important here) 
\begin{eqnarray*}
\frac{5^{6N(E)}}{(\log E)^6} = 1,
\end{eqnarray*}
yielding that $N(E)=6^{-1} \log_5( (\log E)^6)=\log_5(\log E)$. This proves the claim.

\begin{appendix}

\section{Proof of Proposition \ref{p:whitenoise}}\label{ss:proofwhitenoise}

According to Proposition \ref{p:fdd}, the random field $X_E$ converges in the sense of finite-dimensional distributions to ${\bf W}$, and moreover one has that
$$
\sup_{E>0, \, {\bf t}\in [0,1]^2} \E{ X_E({\bf t})^2} <\infty, \quad \mbox{and} \quad \E{ X_E({\bf t})^2}\to \E{ {\bf W}({\bf t})^2},
$$
where the first relation follows from the computations contained in \cite[Sections 6 and 7]{NPR:19}, and the second one takes place as $E\to\infty$, for all ${\bf t}\in [0,1]^2$. As a consequence of these relations, we can apply \cite[Theorem 4]{I:80} and conclude that, if $\psi \in \mathcal{C}_c^\infty(R)$, then 
\begin{equation}\label{e:ca}
\int_R X_E({\bf t}) \psi({\bf t}) d{\bf t} \Law \int_R {\bf W}({\bf t}) \psi({\bf t}) d{\bf t}.
\end{equation}
We now fix $\varphi\in \mathcal{C}_c^\infty(R)$ and apply \eqref{e:ca} to $\psi({\bf t}) = \frac{\partial}{\partial t_1}\frac{\partial}{\partial t_2}\varphi({\bf t})\in \mathcal{C}_c^\infty(R)$, where ${\bf t} = (t_1,t_2)$, in such a way that $ \varphi({\bf t}) = \int_{(t_1, 1)\times (t_2, 1)} \psi({\bf z}) d{\bf z}$. Applying a standard Fubini theorem on the left-hand side of \eqref{e:ca} and a stochastic Fubini theorem (see \cite[Theorem 5.13.1]{PT:10}) on the right-hand side yields that
$$
\int_R X_E({\bf t}) \psi({\bf t}) d{\bf t} = \langle \wt{\mathscr{L}}_E, \varphi\rangle, \quad \mbox{and} \quad \int_R {\bf W}({\bf t}) \psi({\bf t}) d{\bf t} = \int_R \varphi({\bf z}) {\bf W}(d{\bf z}),
$$
where the last expression denotes a stochastic Wiener-It\^o integral with respect to ${\bf W}$. The conclusion now follows from \cite[Theorem III.6.5]{F:67}. 

\section{Proof of Lemma \ref{Lem:UVW}}\label{App:UVW}
Since
$U_n$ and $V_n$ converge weakly to $X$ and zero in $\mathbf{D}_2$, respectively, we use  for instance \cite[Theorem 2]{Wi:69}, to    deduce   that, for every $\eps>0$,
\begin{eqnarray}\label{Eq:UV}
\lim_{\delta \to 0} \limsup_{n\to \infty} \Prob\{ \omega_{\delta}(U_n)>\eps\} = 0,
\quad 
\lim_{\delta \to 0} \limsup_{n\to \infty} \Prob\{ \omega_{\delta}(V_n)>\eps\} = 0
\end{eqnarray}
where $\omega_{\delta}(f) := \sup\{| f(\bb{t}) - f(\bb{s})| : \norm{\bb{t}-\bb{s}}<\delta\}$. To obtain the desired conclusion, by virtue of the discussion contained in \cite[p.1291]{Ne:71}, it is sufficient to show that 
\begin{eqnarray*}
\lim_{\delta \to 0} \limsup_{n\to \infty} \Prob\{ \omega_{\delta}(X_n)>\eps\} = 0.
\end{eqnarray*}
By the triangle inequality, we can write for every $\delta>0$,
\begin{eqnarray*}
\omega_{\delta}(X_n) = \omega_{\delta}(U_n+V_n+W_n) \leq \omega_{\delta}(U_n) +\omega_{\delta}(V_n)+ \omega_{\delta}(W_n),  
\end{eqnarray*}
in such a way that 
\begin{eqnarray*}
\Prob\{ \omega_{\delta}(X_n)>\eps\}
\leq \Prob\{  \omega_{\delta}(U_n)>\eps/3\}
+ \Prob\{ \omega_{\delta}(V_n)>\eps/3\}
+ \Prob\{ \omega_{\delta}(W_n)>\eps/3\}.
\end{eqnarray*}
Using the  estimate $\omega_{\delta}(W_n) \leq 2\sup_{\bb{t}\in [0,1]^2} |W_n(\bb{t})|$ and letting $n\to \infty$ and $\delta\to 0$ then implies the desired conclusion from  \eqref{Eq:UV} and assumption (iii) in the statement.

\section{Moment estimates for suprema of Gaussian fields}\label{App:SupGauss} 
In what follows we consider a centred smooth stationary Gaussian field $G=\{ G(x):x\in \R^d\}  $ on $\R^d$ with covariance function $\E{G(x)G(y)}=\kappa(x-y)$.  For an integer $j\geq 0$ and $\D \subset \R^d$, we write 
\begin{eqnarray*}
\sigma^2(\D;j):= \sup_{x\in \D} \sup_{|\alpha|\leq j} \E{ (\partial_{\alpha} G(x))^2},
\end{eqnarray*}
where $\partial_{\alpha}G(x):= \partial^{\alpha_1}_{x_1}\ldots \partial^{\alpha_d}_{x_d}G(x)$, for $\alpha:=(\alpha_1,\ldots, \alpha_d) $ with $|\alpha| := \sum_{k=1}^d \alpha_k$.
Moreover, for $\D \subset \R^d$ and $\eps>0$, we write $\D^{(\eps)}$ for the $\eps$-enlargement of $\D$. Finally, we use the notation
\begin{eqnarray*}
\norm{f}_{C^j(\D)} := \sup_{x\in \D} \sup_{|\alpha|\leq j} |\partial_{\alpha} f(x)|
\end{eqnarray*} 
for $f:\R^d\to \R$.   
The goal of this section is to prove Proposition \ref{MomChapter}, whose statement we recall for convenience.  
\begin{Prop} \label{Mom}
Let the above setting prevail. 
Assume that for every $m\geq 0$, there exists $\tilde{\sigma}^2(m)<\infty$ such that   
\begin{eqnarray}\label{Ass}
\E{(\partial_{\alpha} G(x))^2} \leq \tilde{\sigma}^2(m), \quad \forall \alpha \in \N^d, |\alpha|\leq m.
\end{eqnarray}
Then, for every $p\geq 1$ and $j\geq0$ 
\begin{eqnarray*}
\E{\norm{G}_{C^j(\D )}^p} \leq C \{\log (\mathrm{vol}(\D))\}^{p/2}
\end{eqnarray*}
where $C>0$ is an absolute constant depending on $p$ and $j$, and $\mathrm{vol}(\D)$  is the $d$-dimensional volume of $\D$.
\end{Prop}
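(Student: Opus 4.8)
The plan is to bound the expected supremum of each partial-derivative field by the classical \textbf{Dudley entropy bound}, and then to upgrade this estimate to an arbitrary moment by means of the \textbf{Borell--TIS concentration inequality} (see e.g. \cite[Chapters 1--2]{AT:07}). First, since $\norm{G}_{C^j(\D)} = \max_{|\alpha|\le j}\sup_{x\in\D}|\partial_{\alpha}G(x)|$ and there are only finitely many multi-indices $\alpha$ with $|\alpha|\le j$ (their number depending only on $d$ and $j$), I would use the elementary bound $\norm{G}_{C^j(\D)}^p \le \sum_{|\alpha|\le j}\big(\sup_{x\in\D}|\partial_{\alpha}G(x)|\big)^p$ to reduce matters to estimating $\E{\big(\sup_{x\in\D}|H(x)|\big)^p}$ for a single smooth centred stationary field $H=\partial_\alpha G$. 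By \eqref{Ass}, such an $H$ satisfies $\sup_x \V{H(x)}\le \tilde\sigma^2(j)$ and $\sup_x\E{\norm{\nabla H(x)}^2}\le d\,\tilde\sigma^2(j+1)$.

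Second, I would control the metric entropy of $\D$ with respect to the canonical pseudo-metric $d_H(x,y) := \E{(H(x)-H(y))^2}^{1/2}$. Writing $H(x)-H(y)=\int_0^1 \scal{\nabla H(y+t(x-y))}{x-y}\,dt$ and applying Cauchy--Schwarz together with stationarity gives the Lipschitz comparison $d_H(x,y)\le L\norm{x-y}$ with $L := \sqrt{d}\,\tilde\sigma(j+1)$, while $|\kappa_H(z)|\le\kappa_H(0)$ shows that the $d_H$-diameter $D$ of $\D$ is bounded by $2\tilde\sigma(j)$, uniformly in $\D$. Consequently the covering numbers satisfy $N(\D,d_H,\eps)\le C_d\,\mathrm{vol}(\D)\,(L/\eps)^d$ for $\eps\le D$ and $N(\D,d_H,\eps)=1$ for $\eps>D$, so Dudley's bound yields
\begin{equation*}
\E{\sup_{x\in\D}H(x)}\le C\int_0^{D}\sqrt{\log N(\D,d_H,\eps)}\,d\eps \le C\Big(\sqrt{\log \mathrm{vol}(\D)}\;D + \int_0^D\sqrt{d\,\log(L/\eps)}\,d\eps\Big).
\end{equation*}
Since $D$ is bounded and $\int_0^D\sqrt{\log(L/\eps)}\,d\eps<\infty$ independently of $\D$, this gives $\E{\sup_x H(x)}\le C\sqrt{\log\mathrm{vol}(\D)}$ for $\mathrm{vol}(\D)$ large; replacing $H$ by $-H$ and using $\sup_x|H(x)|\le\sup_x H(x)+\sup_x(-H(x))$ then produces the same bound for $\E{\sup_x|H(x)|}$.

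Third, to pass from the first moment to the $p$-th moment I would invoke Borell--TIS applied to the symmetrized field $\{\pm H(x):x\in\D\}$: since $\sigma_H^2:=\sup_x\V{H(x)}\le\tilde\sigma^2(j)$ is finite and independent of $\D$, the random variable $\sup_x|H(x)|$ concentrates around its mean with sub-Gaussian tails of variance proxy $\sigma_H^2$, whence all of its centred $L^p$ norms are bounded by a constant $C_{p,j}$ not depending on $\D$. By the triangle inequality in $L^p$, $\E{\big(\sup_x|H(x)|\big)^p}^{1/p}\le \E{\sup_x|H(x)|}+C_{p,j}\le C\sqrt{\log\mathrm{vol}(\D)}$ once $\mathrm{vol}(\D)\ge 2$ (which is the regime of interest, as in Lemma \ref{SupGaussian}, where $\mathrm{vol}(\D)=4\pi^2E\to\infty$). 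Summing the resulting bounds over the finitely many $\alpha$ with $|\alpha|\le j$ gives the claim.

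I expect the main obstacle to be the rigorous verification of the covering-number estimate and, in particular, the observation that the dependence on $\eps$ in the entropy integral contributes only an additive constant, so that $\mathrm{vol}(\D)$ enters solely through the factor $\sqrt{\log\mathrm{vol}(\D)}$. The Lipschitz comparison between $d_H$ and the Euclidean metric, which is exactly where hypothesis \eqref{Ass} (with $m=j+1$) is used, requires some care but is otherwise routine.
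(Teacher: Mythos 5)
Your proposal is correct, and it reaches the conclusion by a genuinely different route for the central step. The paper and your argument share the same overall architecture: first bound $\E{\norm{G}_{C^j(\D)}}$ by $C\sqrt{\log \mathrm{vol}(\D)}$, then upgrade to $p$-th moments via Borell--TIS concentration (the paper does the upgrade through an explicit tail integral, you do it through the triangle inequality in $L^p$ plus sub-Gaussian centred moments --- these are equivalent). Where you diverge is in the expected-supremum bound: you compute the metric entropy of $\D$ for the canonical pseudo-metric directly, using the Lipschitz comparison $d_H(x,y)\le L\norm{x-y}$ (which is exactly where \eqref{Ass} with $m=j+1$ enters), and feed it into Dudley's entropy integral; the paper instead quotes a local first-moment estimate on unit balls from Nazarov--Sodin (Proposition \ref{PropNS}, whose constant is made uniform by stationarity), derives a uniform exponential-moment bound on each ball, covers $\D$ by $N_{\D}\asymp \mathrm{vol}(\D)$ unit balls, and optimizes $b^{-1}\log\(N_{\D}\,\E{e^{b\norm{G}_{C^j(\mathbb{B})}}}\)$ --- a one-scale covering/union argument rather than multi-scale chaining. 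Your route is more self-contained (no appeal to the external local estimate) at the cost of verifying the covering-number bound $N(\D,d_H,\eps)\lesssim \mathrm{vol}(\D)(L/\eps)^d$, which, like the paper's $N_{\D}\lesssim \mathrm{vol}(\D)$, implicitly requires $\D$ to be a reasonably fat set such as the cube $[0,2\pi\sqrt{E}]^2$ arising in the application; both proofs also share the implicit restriction to $\mathrm{vol}(\D)$ bounded away from $1$ so that additive constants can be absorbed into $C\{\log\mathrm{vol}(\D)\}^{p/2}$, which you state explicitly and the paper leaves tacit. One small point: the pathwise inequality $\sup_x|H(x)|\le \sup_x H(x)+\sup_x(-H(x))$ can fail when $H<0$ on all of $\D$; the standard fix is to write $\sup_x|H(x)|\le |H(x_0)|+\sup_x\(H(x)-H(x_0)\)+\sup_x\(H(x_0)-H(x)\)$, which only changes your bound by an additive constant and does not affect the conclusion.
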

We remark that 
  assumption \eqref{Ass} in particular implies that $\sigma^2(\D;j) \leq \tilde{\sigma}^2(j)$ for every $j\geq 0$.
  
\subsection{Proof of Proposition \ref{Mom}}  
The proof of Proposition \ref{Mom} is based on several classical concentration inequalities for suprema of Gaussian fields, that we state here below. 
The first statement  is an estimate for the first moment of $\norm{G}_{C^j(\D)}$ (see \cite[Appendix A.9]{NS:16}).
\begin{Prop}\label{PropNS}
Let the above setting prevail. 
\begin{eqnarray}\label{NS}
\E{ \norm{G}_{C^j(\D)}} \leq c_1(\D) \sigma(\D^{(1)};j+1)  ,
\end{eqnarray}
where $c_1(\D)$ is a constant depending on $\D$.
\end{Prop} 

The following inequality is the so-called {\bf Borell-TIS inequality} applied to the Gaussian field $\partial_{\alpha}G$, see for instance \cite[Theorem 2.1.1]{AT:09}.
\begin{Prop}\label{PropBTIS}
For every $\alpha \in \N^d$ and $u>0$, we have 
\begin{eqnarray}\label{BTIS}
\Prob\{ \sup_{x\in D} \partial_{\alpha} G(x)
> \E{\sup_{x\in \D} \partial_{\alpha} G(x)}
+ u\}
\leq e^{-\frac{u^2}{2 \sigma^2(\D;|\alpha|)}}.
\end{eqnarray}
\end{Prop}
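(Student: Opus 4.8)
The plan is to recognize Proposition \ref{PropBTIS} as the instance of the Borell--Tsirelson--Ibragimov--Sudakov concentration principle associated with the single centred Gaussian field $X := \partial_\alpha G$ on $\D$, whose pointwise variance satisfies $\E{X(x)^2} = \E{(\partial_\alpha G(x))^2}\leq\sigma^2(\D;|\alpha|)$ for every $x\in\D$ by the very definition of $\sigma^2(\D;|\alpha|)$. Since $G$ is smooth, $X$ has $\Prob$-almost surely continuous sample paths, so fixing a countable dense set $D_0\subset\D$ one has $M := \sup_{x\in\D}X(x) = \sup_{x\in D_0}X(x)$ almost surely; choosing an increasing sequence of finite sets $F_n\uparrow D_0$ we get $M_n := \max_{x\in F_n}X(x)\uparrow M$ almost surely. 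The strategy is to establish the tail bound at the level of the finite maxima $M_n$ --- where everything reduces to finite-dimensional Gaussian concentration --- and then to pass to the limit.

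First I would treat a fixed finite set $F = \{x_1,\ldots,x_N\}\subseteq\D$. The vector $(X(x_1),\ldots,X(x_N))$ is centred Gaussian, hence can be written as $AZ$ for a standard Gaussian vector $Z\sim\mathcal{N}(0,I_N)$ and a matrix $A$ with $AA^\top$ equal to its covariance matrix. Writing $a_i$ for the $i$-th row of $A$, I note that the functional $z\mapsto f(z) := \max_{1\leq i\leq N}(Az)_i$ is Lipschitz on $\R^N$ with constant $\max_i\norm{a_i}$, and $\norm{a_i}^2 = (AA^\top)_{ii} = \E{X(x_i)^2}\leq\sigma^2(\D;|\alpha|)$, so $f$ is $\sigma(\D;|\alpha|)$-Lipschitz. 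Applying the Gaussian concentration inequality for Lipschitz functions of a standard Gaussian vector --- the foundational input, itself a consequence of Gaussian isoperimetry or of the logarithmic Sobolev inequality via Herbst's argument --- yields
$$\Prob\{M_F - \E{M_F} > u\} = \Prob\{f(Z) - \E{f(Z)} > u\}\leq\exp\left(-\frac{u^2}{2\sigma^2(\D;|\alpha|)}\right),$$
with $M_F := \max_{x\in F}X(x)$, and the bound is uniform over all finite $F\subseteq\D$.

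Next I would pass to the limit along $F_n\uparrow D_0$. By Proposition \ref{PropNS} one has $\E{M} \leq\E{\norm{G}_{C^{|\alpha|}(\D)}}<\infty$, so $\E{M}$ is finite and serves as a legitimate centring. Since $M_n\leq M$ gives $\E{M_n}\leq\E{M}$, the inclusion $\{M_n > \E{M} + u\}\subseteq\{M_n > \E{M_n} + u\}$ holds, whence $\Prob\{M_n > \E{M} + u\}\leq\exp(-u^2/(2\sigma^2(\D;|\alpha|)))$ by the previous display. Finally, because $M_n\uparrow M$ almost surely, the events $\{M_n > \E{M} + u\}$ increase to $\{M > \E{M} + u\}$, and monotone convergence of probabilities gives $\Prob\{M > \E{M} + u\}\leq\exp(-u^2/(2\sigma^2(\D;|\alpha|)))$, which is exactly \eqref{BTIS}.

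The main obstacle is the passage from the finite maxima to the continuous supremum: one must guarantee that the Lipschitz constant obtained at each finite stage is bounded by the single quantity $\sigma(\D;|\alpha|)$ uniformly in $F$ (which is why the variance bound enters as a supremum over $\D$), and that the centring constant $\E{M_n}$ can be replaced by $\E{M}$ in the limit --- handled cleanly through the monotonicity $\E{M_n}\leq\E{M}$ together with the finiteness $\E{M}<\infty$ supplied by Proposition \ref{PropNS}. Once these points are secured, the tail estimate is inherited verbatim from the finite-dimensional Gaussian concentration inequality.
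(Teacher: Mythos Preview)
Your argument is correct and self-contained. However, the paper does not actually supply a proof of Proposition~\ref{PropBTIS}: it simply invokes the classical Borell--TIS inequality as a black box, citing \cite[Theorem~2.1.1]{AT:09}, and immediately applies it to the field $\partial_\alpha G$. So there is no proof in the paper to compare against; you have filled in the details that the paper deliberately leaves to the literature.

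What you have written is one of the standard routes to Borell--TIS: reduce to finite-dimensional maxima via separability, bound the Lipschitz constant of $z\mapsto\max_i(Az)_i$ by the largest row norm of $A$ (which equals the largest pointwise standard deviation), apply Gaussian Lipschitz concentration, and pass to the limit by monotone convergence. Your use of Proposition~\ref{PropNS} to guarantee $\E{M}<\infty$ is a nice touch, since it makes the limiting step clean without needing a separate argument for a.s.\ finiteness of the supremum. The alternative proofs one finds in references such as \cite{AT:09} typically proceed either through the Gaussian isoperimetric inequality directly or through an Ornstein--Uhlenbeck semigroup interpolation; your Lipschitz-concentration route is arguably the most elementary and is entirely adequate here.
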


Combining the contents of Propositions \ref{PropNS} and \ref{PropBTIS}, we deduce that for every $\alpha\in \N^d$ with $|\alpha|\leq j$ and $u>0$
\begin{eqnarray*}
\Prob\{\sup_{x\in \D} \partial_{\alpha} G(x) > c_1(\D) \tilde{\sigma}(j+1)+u \}
\leq e^{-\frac{u^2}{2\tilde{\sigma}^2(j+1)}}
\end{eqnarray*}  
which implies (by symmetry)
\begin{eqnarray*}
\Prob\{\sup_{x\in \D} |\partial_{\alpha} G(x)| > c_1(\D) \tilde{\sigma}(j+1)+u \}
\leq 2e^{-\frac{u^2}{2\tilde{\sigma}^2(j+1)}}.
\end{eqnarray*} 
Therefore summing over all possible $\alpha$ with $|\alpha|\leq j$,
\begin{eqnarray}\label{EstA}
\Prob\{\norm{G}_{C^j(\D)} > c_1(\D) \tilde{\sigma}(j+1)+u \}
\leq k(j,d) e^{-\frac{u^2}{2\sigma^2(j+1)}}
\end{eqnarray} 
where $k(j,d):=2\mathrm{card}\{ \alpha\in \N^d: |\alpha|=j\}$.

\medskip
 We can now prove Proposition \ref{Mom}.
\begin{proof}[Proof of Proposition \ref{Mom}]
By stationarity of $G$ it follows that, if $\D'$ is a translation of  $\D$, then necessarily $c_1(\D)=c_1(\D')$, where $c_1(\D)$ is the constant appearing in \eqref{NS}.  In particular, 
applying \eqref{NS} in the case where $\D$ is  a ball $\mathbb{B}$ with unit radius   and exploiting the  moment assumption \eqref{Ass} on $G$, we deduce that 
\begin{eqnarray*}
\E{ \norm{G}_{C^j(\mathbb{B})}} \leq c_1 \tilde{\sigma}(j+1),
\end{eqnarray*}
where $c_1$ is a universal constant. Therefore, applying  \eqref{EstA}  with $\D=\mathbb{B}$ yields
\begin{eqnarray*}
\Prob\{\norm{G}_{C^j(\mathbb{B})} > c_1  \tilde{\sigma}(j+1)+u \}
\leq k(j,d) e^{-\frac{u^2}{2\tilde{\sigma}^2(j+1)}}, \quad u>0.
\end{eqnarray*}
Now, using the above inequality with $u=t-c_1  \tilde{\sigma}(j+1)$, we can write for every $b>0$ (setting $k:=k(j,d), \tilde{\sigma}:=\tilde{\sigma}(j+1)$),
\begin{eqnarray}\label{expballs}
\E{e^{b \norm{G}_{C^j(\mathbb{B})}}}   
&=& 1+ b\int_{0}^{\infty} e^{tb}\Prob\{\norm{G}_{C^j(\mathbb{B})} > c_1  \tilde{\sigma} +(t-c_1  \tilde{\sigma}) \} dt  \notag \\ 
&=& e^{b c_1 \tilde{\sigma}}
+ b\int_{c_1\tilde{\sigma}}^{\infty}
e^{tb}\Prob\{\norm{G}_{C^j(\mathbb{B})} > c_1  \tilde{\sigma} +(t-c_1  \tilde{\sigma}) \} dt  \notag \\ 
&\leq& e^{b c_1 \tilde{\sigma}}
+ bk\int_{c_1\tilde{\sigma}}^{\infty}
e^{tb} e^{-\frac{(t-c_1\tilde{\sigma})^2}{2\tilde{\sigma}^2}} dt 
\leq e^{b c_1 \tilde{\sigma}}
+ bk\int_{\R}
e^{tb} e^{-\frac{(t-c_1\tilde{\sigma})^2}{2\tilde{\sigma}^2}} dt  \notag \\ 
&=&e^{b c_1 \tilde{\sigma}}
+ bk \sqrt{2\pi} \tilde{\sigma}\E{e^{bZ}} , \quad Z \sim \mathcal{N}(c_1\tilde{\sigma},\tilde{\sigma}^2)  \notag \\ 
&=& e^{b c_1 \tilde{\sigma}}+ bk \sqrt{2\pi}  \tilde{\sigma}\left(e^{bc_1\tilde{\sigma}+b^2\tilde{\sigma}^2/2} \right)
= e^{bc_1 \tilde{\sigma}} (1+bk\sqrt{2\pi}  \tilde{\sigma}e^{b^2\tilde{\sigma}^2/2})  \notag \\ 
&\leq& e^{bc_1 \tilde{\sigma} + b^2 \tilde{\sigma}^2/2}(1+bk\sqrt{2\pi}  \tilde{\sigma}) 
\leq e^{bc_1 \tilde{\sigma} + b^2 \tilde{\sigma}^2/2+bk\sqrt{2\pi}  \tilde{\sigma}}\notag\\
&=& e^{b\tilde{\sigma}(c_1  +k\sqrt{2\pi}  ) + b^2 \tilde{\sigma}^2/2},
\end{eqnarray}
where we used that $1+x\leq e^x$.
Now for $\D\subset \R^d$ we denote by $N_{\D}$ the minimal number  of unit balls needed to cover $\D$ and by $\mathcal{B}_{\D}:=\{\mathbb{B}_1,\ldots, \mathbb{B}_{N_{\D}}\}$ the collection of all unit balls covering $\D$ in such a way that $\mathrm{card}(\mathcal{B}_{\D})=N_{\D}$. Then, we have that, for every $b>0$
\begin{eqnarray*}
\E{ \norm{G}_{C^j(\D)}} &=&
 \E{\log \exp(b^{-1}b\norm{G}_{C^j(\D)} )} 
= b^{-1} \E{ \log e^{b \norm{G}_{C^j(\D)} }} \\
&\leq& b^{-1} \log\E{e^{b\norm{G}_{C^j(\D)}}} 
\leq b^{-1} \log \sum_{l=1}^{N_D} \E{e^{b\norm{G}_{C^j(\mathbb{B}_l)}}} \\
&\leq& b^{-1} \log\( N_{\D} 
\E{e^{b\norm{G}_{C^j(\mathbb{B}_1)}}}\) \\
&\leq&  b^{-1} \log \( N_{\D} e^{b\tilde{\sigma}(c_1  +k\sqrt{2\pi}  ) + b^2 \tilde{\sigma}^2/2} \) \qquad \mathrm{using  \ \eqref{expballs}}\\
&=& b^{-1}\log(N_{\D})+ \tilde{\sigma}(c_1  +k\sqrt{2\pi} )+ b\frac{\tilde{\sigma}^2}{2} =: h(b). 
\end{eqnarray*}
Differentiating $h$ with respect to $b$, we find that $h(b)\leq h(b_0)$ for $b_0=\sqrt{2}\sqrt{\log( N_{\D}})/\tilde{\sigma}$ and thus 
\begin{eqnarray}\label{p1}
\E{ \norm{G}_{C^j(\D)}} \leq h(b_0) =
\sqrt{2}\tilde{\sigma}\sqrt{\log (N_{\D})}+ \tilde{\sigma}(c_1  +k\sqrt{2\pi} ) =: \mu .
\end{eqnarray}
Now let $p\geq 1$. Then, using  the  inequality 
\begin{eqnarray*}
\Prob\{\norm{G}_{C^j(\D)} > \mu+ u \} \leq 
ke^{-\frac{u^2}{2\tilde{\sigma}^2}}, \quad u>0
\end{eqnarray*}
together with \eqref{p1}, yields
\begin{eqnarray*}
\E{ \norm{G}_{C^j(\D)}^p }
&=& p\int_{0}^{\infty} t^{p-1} 
\Prob\{\norm{G}_{C^j(\D)} > \mu+ (t-\mu) \}dt \\
&\leq & \mu^p + pk\int_{\mu}^{\infty}
t^{p-1}e^{-\frac{(t-\mu)^2}{2\tilde{\sigma}^2}} dt 
\leq \mu^p + pk\int_{\R}
|t|^{p-1}e^{-\frac{(t-\mu)^2}{2\tilde{\sigma}^2}} dt\\
&=&\mu^p + pk \sqrt{2\pi}\tilde{\sigma}\E{|Z|^{p-1}}, \qquad Z\sim\mathcal{N}(\mu,\tilde{\sigma}^2).\end{eqnarray*}
Now for $Z\sim\mathcal{N}(\mu,\tilde{\sigma}^2)$ and $Z':= (Z-\mu)/\tilde{\sigma} \sim \mathcal{N}(0,1)$,
\begin{gather*}
\E{|Z|^{p-1}} = \tilde{\sigma}^{p-1} \E{|Z'+\mu/\tilde{\sigma}|^{p-1}} \leq 2^{p-2}\tilde{\sigma}^{p-1}\( \E{|Z'|^{p-1}}+ (\mu/\tilde{\sigma})^{p-1}\)
=: C_p  (\tilde{\sigma}^{p-1}+\mu^{p-1}),
\end{gather*}
where $C_p:= 2^{p-2} \E{|Z'|^{p-1}}$ depends only on $p$, so that 
\begin{eqnarray*}
\E{ \norm{G}_{C^j(\D)}^p } 
\leq \mu^p + pk \sqrt{2\pi}\tilde{\sigma}C_p  (\tilde{\sigma}^{p-1}+\mu^{p-1}).
\end{eqnarray*}
The conclusion  follows from the definition of $\mu$ in  \eqref{p1} and the fact that there are constants $C_1,C_2>0$ such that $C_1 \mathrm{vol}(\D)\leq N_{\D} \leq C_2\mathrm{vol}(\D)$.
\end{proof}

\end{appendix}

\section*{Acknowledgements}

We thank Maurizia Rossi for several fruitful discussions. GP is partially supported by the FNR grant HDSA (O21/16236290/HDSA) at Luxembourg University. AV is supported by the co-financing of the European Union - FSE-REACT-EU, PON Research and Innovation 2014-2020, DM 1062/2021.

\bibliographystyle{alpha}
\bibliography{Bibliography}

\end{document}